\definecolor{orangep}{RGB}{255,128,0}
\definecolor{bluep}{RGB}{0,0,255}
\DeclareMathOperator*{\argmin}{Arg\,min}
\newcommand{\R}{\mathbb{R}}
\newcommand{\dom}{\mathrm{dom}\,}
\newcommand{\dist}{\mathrm{dist}}
\newcommand{\aff}{\mathrm{aff}}
\newcommand{\parr}{\mathrm{par}}
\newcommand{\ri}{\mathrm{ri}}
\newcommand{\bB}{\mathbb{B}}
\newcommand{\cS}{\mathcal{S}}
\newcommand{\cF}{\mathcal{F}}
\newcommand{\n}{\mathbb{N}}
\newcommand{\cD}{\mathcal{D}}
\newcommand{\cM}{\mathcal{M}}
\newcommand{\cA}{\mathcal{A}}
\newcommand{\cL}{\mathcal{L}}
\newcommand{\cN}{\mathcal{N}}
\newcommand{\cP}{\mathcal{P}}
\newcommand{\sgn}{\mathrm{sgn}}
\newcommand{\vp}{\varphi}
\newcommand{\refcheckize}[1]{%
	\expandafter\let\csname
	@@\string#1\endcsname#1%
	\expandafter\DeclareRobustCommand\csname
	relax\string#1\endcsname[1]{%
		\csname
		@@\string#1\endcsname{##1}\wrtusdrf{##1}}%
	\expandafter\let\expandafter#1\csname
	relax\string#1\endcsname
}
\crefname{hypothesis}{Hypothesis}{Hypotheses}
\title{Kurdyka-\L ojasiewicz Exponent via square transformation\thanks{Submitted to the editors \today.
}}
\author{Wenqing Ouyang\thanks{(\url{wo2205@columbia.edu}), IEOR, Columbia University, New York. Research supported by NSF EPCN grant 2023032.}}
\DeclareMathOperator{\diag}{diag}
\begin{document}

\maketitle

\begin{abstract}
  We consider one of the most common reparameterization techniques, the square transformation. Assuming the original objective function is the sum of a smooth function and a polyhedral function, we study the variational properties of the objective function after reparameterization. In particular, we first study the minimal norm of the subdifferential of the reparameterized objective function. Second, we compute the second subderivative of the reparameterized objective function on a linear subspace, which allows for fully characterizing the subclass of stationary points of the reparameterized objective function that correspond to stationary points of the original objective function. Finally, utilizing the representation of the minimal norm of the subdifferential, we show that the Kurdyka-\L ojasiewicz (KL) exponent of the reparameterized function can be deduced from that of the original function.  
\end{abstract}

\begin{keywords}
  Kurdyka-\L ojasiewicz exponent, square transformation, second subderivative
\end{keywords}

\begin{MSCcodes}
90C25, 90C26, 68Q25
\end{MSCcodes}

\section{Introduction and Background}
\label{sec1}
Introducing square variables to eliminate nonnegative constraints is a simple yet effective reformulation technique. Given an optimization problem as follows:
\begin{align}
  \label{nonnegative_cons_prob}
  \min_{x\in\R^n} f(x),\quad s.t.~x\geq 0,
\end{align}
where $f\in C^2(\R^n)$. By introducing $y$ such that $x=y^2$, we can transform \cref{nonnegative_cons_prob} to the following unconstrained problem:
\begin{align}
  \label{un_sq_prob}
  \min_{y\in \R^n} f(y^2).
\end{align}
This kind of reparameterization is also called Hadamard parameterization \cite{li2023simplex}, which can be traced back to the textbook \cite[p. 268--269]{gill1981practical}. Transforming \cref{nonnegative_cons_prob} to \cref{un_sq_prob} makes algorithms designed for unconstrained optimization applicable, such as Newton and quasi-Newton methods. In general, if there are additional constraints in \cref{nonnegative_cons_prob}, the square transformation can also eliminate the nonnegativity constraints, which can be exploited in algorithmic design \cite{li2023simplex,tang2024optimization,liu2025linear,chou2025get}. For example, in \cite{li2023simplex}, the authors consider the simplex constrained optimization problem:
\begin{equation}
  \label{simplex_cons_prob}
  \min_{x\in \R^n}f(x),\quad s.t.~\sum_{i=1}^n x_i=1,\quad x\geq 0,
\end{equation}
where $f\in C^2(\R^n)$. By setting $x=y^2$, this problem can be transformed to 
\begin{align}
  \label{ball_cons_prob}
  \min_{y\in \R^n} f(y^2),\quad s.t.~\sum_{i=1}^n y_i^2=1,
\end{align}
which is a smooth optimization problem on Riemannian manifold. More surprisingly, combined with the implicit bias of gradient descent \cite{li2021implicit,tibshirani2021equivalences,zhao2022high}, in \cite{vaskevicius2019implicit,zhao2022high} the square transformation is used to solve the basis pursuit problem \cite{chen2001atomic,donoho2006compressed,chen1996basis}.  
Further applications to nonlinear programming problems can be found in \cite{ding2023squared,fukuda2017note,Bertsekas99}, where the square transformation is used to eliminate the nonnegativity constraints on the slack variables.

In this paper, we consider the following optimization problem
  \begin{equation}
    \label{prob1}
    \min_{x\in\R^n} \phi(x):=f(x)+g(x)  \end{equation}
where we assume $f\in C^2(\R^n)$ and $g:\R^n\to \R\cup\{\infty\}$ is a proper polyhedral function with $\dom g\subseteq \R_+^n$. We also consider the square transformation applied to \cref{prob1}:
\begin{equation}
  \label{sqprob1}
  \min_{y\in \R^n} \Phi(y):=f(y^2)+g(y^2).
\end{equation}
This includes the models in \cite{li2023simplex,tang2024optimization,chou2025get} as special cases. We are interested in the variational properties of \cref{sqprob1}, specifically, the KL exponent of stationary points of \cref{sqprob1} from that of \cref{prob1}, and a subclass of stationary points of \cref{sqprob1} which correspond to stationary points of \cref{prob1} via $y\mapsto y^2$.

There are existing works focusing on the correspondence between the stationary points of \cref{prob1} and \cref{sqprob1} in some special cases. In \cite[Theorem 2.3]{ding2023squared}, it is shown that if $x=y^2$ is a stationary point of of \cref{nonnegative_cons_prob}, then 
\[   \forall w\in \R^n,~w_{\cA\cup \cD}=0,\quad w^\top \nabla^2 f(x)w\geq 0,              \]
is equivalent to 
\[   \forall w\in \R^n,~w_{\cA}=0,~w_{\cD}\geq 0,\quad w^\top \nabla^2f(x)w\geq 0,\]
where $\cA=\{i\in [n]:~x_i=0,~\nabla f(x)_i>0\}$ and $\cD=\{i\in [n]:~x_i=\nabla f(x)_i=0\}$. Furthermore, in \cite[Theorem 1]{li2023simplex}, the authors prove that $y$ is a second-order Karush-Kuhn-Tucker (KKT) point of \cref{ball_cons_prob} if and only if $y^2$ is a second-order KKT point of \cref{simplex_cons_prob}. Moreover, in \cite{tang2024optimization}, by assuming that $g$ in \cref{prob1} is an indicator function of a kind of polyhedral set\footnote{This set should have the form $\{x\in \R^n:~Ax=b,x\geq 0\}$ for some $A\in\R^{m\times n}$ and $b\in \R^m$.}, the authors show that if $y$ is a second-order stationary point of \cref{sqprob1}, then $y^2$ is a first-order stationary point of \cref{prob1}. This kind of implication ``second-order stationary point of \cref{sqprob1} $\implies$ first-order stationary point of \cref{prob1}" is discussed in \cite[Section 3.2]{levin2025effect} under the setting that $g$ is an indicator function of a smooth manifold $\cM$ and $T^{-1}\cM$ is also a smooth manifold, where $T:y\mapsto y^2$ is the square transformation when specialized in the problems considered in this paper.  However, all these previous results are highly specialized for the structure of constraints. 

As for the KL exponent of \cref{sqprob1}, the only relevant work we are aware of is \cite{ouyang2024kurdyka}, where the authors calculate the KL exponent of the Hadamard difference parameterization of $\ell_1$ norm regularized optimization problems. This work borrows some techniques from \cite{ouyang2024kurdyka} to deal with the case where $f$ is convex and the strict complementarity condition does not hold. Let us note that the essential difference with \cite{ouyang2024kurdyka} is that in this paper the lifting problem \cref{sqprob1} is not necessarily smooth, which makes the KL exponent calculation more elusive. 

In summary, our contributions include:
\begin{itemize}
  \item By analyzing the subdifferential of $\Phi$, we establish that 
  $$ \forall y\in \dom\partial\Phi,~~\dist(0,\partial \Phi(y))=\dist(0,2y\circ (\nabla f(y^2)+\partial g(y^2))),$$ which forms the basis of calculating the KL exponent of $\Phi$.
  \item We establish the formula of the second subderivative of $\vp$ at $x$ in a certain linear subspace, which allows us to characterize the subclass of stationary points of $\Phi$ in \cref{sqprob1} which correspond to a stationary points of $\phi$ in \cref{prob1}. Using the second subderivative helps us get rid of structural assumptions on $g$ in \cref{prob1} like in the literature \cite{ding2023squared,tang2024optimization,levin2025effect}. 
  \item We deduce the KL exponent of $\Phi$ from that of $\phi$. Specially, we show that if $\phi$ satisfies the KL property at $\bar x$ with exponent $\alpha$ and the strict complementarity condition holds at $\bar x$, then $\Phi$ satisfies the KL property with exponent $\alpha$ at $\bar y$, where $\bar x=\bar y^2$. If the strict complementarity condition fails and we assume convexity of $\phi$ and a certain kind of error bound condition with parameter $\gamma\in (0,1]$, then $\Phi$ satisfies the KL property at $\bar y$ with exponent $(1+\beta)/2$, where $\beta=1-\gamma(1-\alpha)$. Similar results are obtained for another kind of Hadamard parameterization problems in \cite{ouyang2024kurdyka}.
\end{itemize}
The major technical difficulty lies in the fact that the square transformation $T:x\mapsto x^2$ does not have a surjective Jacobian matrix, which makes the calculus rules in \cite[Theorem 10.6, Exercise 10.7]{rockafellar2009variational} inapplicable, and hence we cannot fully characterize $\partial\Phi$, while knowing $\dist(0,\partial\Phi)$ is essential to calculate the KL exponent of $\Phi$. The calculation of the second subderivative also suffers from the same issue. Notice that $T$ can even have zero Jacobian, which means that the constraint qualifications used in \cite[Proposition 13.14]{rockafellar2009variational} can fail. Let us note that such kind of constraint qualifications is needed for the second subderivative calculus rules in many other cases, see, e.g. \cite{ouyang2023variational1,mohammadi2021parabolic,benko2023second}. By carefully inspecting the relationship between the subdifferential, the regular subdifferential and the subderivative, we establish the formula of $\dist(0,\partial\Phi)$ without fully characterizing $\partial\Phi$. For the second subderivative, we utilize the fact that polyhedral function is locally a support of a polyhedral set, c.f. \cite[equation (4.2.7)]{facchinei2007finite}, \cite[Appendix B]{ouyang2024kurdyka} and \cite[Example 5.3.17]{milzarek2016numerical}, and use duality theory to compute the second subderivative of $\Phi$ in a linear subspace, which is then used to characterize the subclass of stationary points of $\Phi$ which correspond to stationary points of $\phi$. 

As for the KL exponent, the major difference with \cite{ouyang2024kurdyka} is that here the reparameterized function $\Phi$ in \cref{sqprob1} is not necessarily differentiable, or differentiable on a smooth manifold. This technical difficulty is resolved by using the fruitful geometric properties of polyhedral sets from convex geometry, and in particular, by studying the face structure of polyhedral sets.   

The rest of the paper is organized as follows. \cref{sec:notations} introduces basic terminologies and notation used in this paper, along with some preliminary results. In \cref{sec:foa}, we analyze the property of $\partial\Phi$ for $\Phi$ in \cref{sqprob1}, and establish the formula of $\dist(0,\partial\Phi)$. In \cref{sec:soa}, we analyze the second subderivative of $\Phi$ in \cref{sqprob1}, and then characterize the subclass of stationary points of $\Phi$ which correspond to stationary points of $\phi$ in \cref{prob1} using the second subderivative. In \cref{sec:KLstrict}, we deduce the KL exponent of $\Phi$ from that of $\phi$ under strict complementarity condition. Finally, in \cref{sec:KLnonstrict}, we deduce the KL exponent of $\Phi$ from that of $\phi$ without assuming strict complementarity condition, but with additional error bound assumption and convexity assumption. 
\section{Notation and preliminary results}
\label{sec:notations}
In this paper, we use $[r]$ to denote the set $\{1,\dots,r\}$ for nonnegative integer $r$. For vector $x\in \R^n$, we use $x^2$ to denote the componentwise square of $x$, i.e., $(x^2)_i=x_i^2$ for all $i\in [n]$. For vectors $x,y\in \R^n$, we denote the componentwise division of $x$ by $y$ as $x/y$, i.e., $(x/y)_i=x_i/y_i$ for all $i\in [n]$. For an index set $I\subseteq [n]$ and a vector $x\in \R^n$, the notation $x_I\in \R^{|I|}$ denotes the vector consisting the component of $x$ whose indices are in $I$. The notation $I^c$ denotes the complement of the set $I$. For a set $S\subseteq \R^n$, the tangent cone and normal cone of $S$ at $x\in S$ are denoted by $T_S(x)$ and $N_S(x)$, respectively; the projection operator onto the set $S$ is denoted by $\Pi_S$; the distance from a point $x$ to $S$ is denoted by $\dist(x,C)$; we denote the indicator function of $S$ by $\iota_S$, and the support function of $S$ by $\sigma_S$; the affine hull of $S$ is denoted by $\aff(S)$, which is defined as the minimal affine set containing $S$; the relative interior of $S$ is denoted by $\ri(S)$, which is the interior of $S$ with respect the topology of $\aff(S)$.

For an extended real-valued function $f$, its domain is defined as $\dom f:=\{x\in \R^n:~f(x)<\infty\}$. An extended real-valued function $f$ is said to be lower semicontinuous(lsc) at $\bar x$ if $\liminf_{x\to x}f(x)\geq f(\bar x)$, and it is lsc if this holds for all $x\in \R^n$; $f$ is said to be proper if $\dom f\neq\emptyset$ and $f>-\infty$.

We next introduce several notations from variational analysis. Below is the definition of subderivative and subdifferentials.
\begin{defn}[{\cite[Definition 8.1]{rockafellar2009variational}}]
  \label{def_subderivative}
  Let $f:\R^n\to \R\cup\{\pm\infty\}$ be an extended real-valued function, and let $\bar x\in \dom f$. Then the subderivative of $f$ at $\bar x$, denoted by $df(\bar x):\R^n\to\R^n$, is defined as:
  \[ \forall~w\in \R^n,\quad   df(\bar x)(w):= \liminf_{t\downarrow 0,~\tilde w\to w}\frac{f(\bar x+t\tilde w)-f(\bar x)}{t}.         \] 
\end{defn}
We would also use the directional derivative, which is defined as
\[  f'(x;h):=\lim_{t\downarrow 0}\frac{f(x+th)-f(x)}{t}     \]
whenever the limit exists. 

\begin{defn}[{\cite[Definition 8.3]{rockafellar2009variational}}]
    Consider $f:\R^n\to \R\cup\{\pm\infty\}$ and a point $\bar x\in \R^n$ with $f(\bar x)$ finite. For a vector $v\in\R^n$, one says that
    \begin{itemize}
        \item[(a)] $v$ is a regular subgradient of $f$ at $x$, written $v\in \hat\partial f(\bar x)$, if
        \[   f(x) \geq f(\bar x)+\langle v,x-\bar x \rangle+ o(\|x-\bar x\|),        \]
        \item[(b)] $v$ is a subgradient of $f$ at $\bar x$, written $v\in\partial f(\bar x)$, if there are sequences $x^k\to\bar x$ and $v^k\to v$ with $v^k\in\partial f(x^k)$ and $f(x^k)\to f(\bar x)$.
        \item[(c)] $v$ is a horizon subgradient of $f$ at $\bar x$, written $v\in\partial^\infty f(\bar x)$, if the same  holds as in (b), except that instead of $v^k\to v$ one has $\lambda_kv^k\to v$ for some sequence $\lambda_k\downarrow 0$.
    \end{itemize}
\end{defn}

The notation $f\square g$ for $f,g:\R^n\to\R\cup\{\pm\infty\}$ being extended real-valued functions, called the inf-convolution of $f$ and $g$, is defined as an extended real value function as follows 
\[  \forall x\in \R^n,\quad (f\square g)(x)=\inf_{y\in \R^n}f(x-y)+g(y).         \]
The sign function $\sgn:\R\to \R$ is defined:
\begin{equation*}
  \sgn(x)=\begin{cases}
    1 & \text{if }x>0, \\
    0 & \text{if }x=0, \\
    -1  & \text{if }x<0.
  \end{cases}
\end{equation*}
Next, we introduce the definition of KL property.
\begin{defn}[{Kurdyka-\L ojasiewicz property and exponent}]
  \label{defkl} We say that a proper closed function $h:\R^n\rightarrow \mathbb{R} \cup\{\infty\}$ satisfies the KL property at $\bar{x} \in \operatorname{dom} \partial h$ if there are $a \in(0, \infty]$, a neighborhood $V$ of $\bar{x}$ and a continuous concave function $\varphi:[0, r) \rightarrow[0, \infty)$ with $\varphi(0)=0$ such that
  
  {\rm (i)} $\varphi$ is continuously differentiable on $(0, r)$ with $\varphi^{\prime}>0$ on $(0, r)$;
  
  {\rm (ii)} For any $x \in V$ with $0<h(x)-h(\bar{x})<a$, it holds that
  \begin{equation}
      \label{KLineq}
      \varphi^{\prime}(h(x)-h(\bar{x})) \operatorname{dist}(0, \partial h(x)) \geq 1.
  \end{equation}
  If $h$ satisfies the KL property at $\bar{x} \in \operatorname{dom} \partial h$ and the $\varphi(s)$ in $(2.5)$ can be chosen as $\bar{c} s^{1-\alpha}$ for some $\bar{c}>0$ and $\alpha \in[0,1)$, then we say that $h$ satisfies the $K L$ property at $\bar{x}$ with exponent $\alpha$.
\end{defn}
\begin{remark}
  \label{remarkkl}
   If $h$ is continuous on $\dom h$, then \eqref{KLineq} holds for all $x\in V\cap \dom h$ with $h(x)-h(\bar x)>0$ by reducing $V$ if necessary, since the condition $h(x)-h(\bar x)<a$ holds naturally if $V$ is sufficiently small.
\end{remark}

We conclude this section with several preliminary results that would be useful in the subsequent sections. The first result is about the structure of $\partial g$ for $g$ being a polyhedral function with $\dom g\subseteq \R^n_+$.
\begin{prop}
\label{prop2-2}
  Let $g:\R^n\to\R\cup\{\infty\}$ be a proper lsc convex function such that $\dom g\subseteq \R^n_+$. Assume $\bar{x}\in \dom g$, and let $I=\{i\in [n]:~\bar x_i\neq 0\}$. Then for all $v\in \partial g(\bar x)$ and any $w\in \R^n$ with $w_I=0$, $w_{I^c}\leq 0$, it holds that $v+w\in \partial g(\bar x)$. 
\end{prop}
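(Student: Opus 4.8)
The plan is to reduce everything to the convex subgradient inequality and exploit the single structural fact that $\dom g\subseteq \R^n_+$ forces a sign condition on the increments $x-\bar x$ in the coordinates where $\bar x$ vanishes. Concretely, since $\bar x\in\dom g$ is convex, $v\in\partial g(\bar x)$ is equivalent to
\[
  g(x)\ \ge\ g(\bar x)+\langle v,\,x-\bar x\rangle\qquad\text{for all }x\in\R^n,
\]
and the goal is to verify the same inequality with $v$ replaced by $v+w$. So it suffices to show $\langle w,\,x-\bar x\rangle\le 0$ whenever $x\in\dom g$ (the inequality being vacuous when $g(x)=\infty$).

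First I would split the inner product along $I$ and $I^c$: $\langle w,x-\bar x\rangle=\sum_{i\in I}w_i(x_i-\bar x_i)+\sum_{i\in I^c}w_i(x_i-\bar x_i)$. The first sum vanishes because $w_I=0$ by hypothesis. For the second sum, note that for $i\in I^c$ we have $\bar x_i=0$ by definition of $I$, while $x_i\ge 0$ because $x\in\dom g\subseteq\R^n_+$; hence $x_i-\bar x_i=x_i\ge 0$. Combined with $w_{I^c}\le 0$, every term $w_i(x_i-\bar x_i)$ in the second sum is nonpositive, so $\langle w,x-\bar x\rangle\le 0$.

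Putting these together, for every $x\in\dom g$,
\[
  g(x)\ \ge\ g(\bar x)+\langle v,\,x-\bar x\rangle\ \ge\ g(\bar x)+\langle v+w,\,x-\bar x\rangle,
\]
and for $x\notin\dom g$ the leftmost quantity is $+\infty$, so the chain holds trivially. This is exactly the statement $v+w\in\partial g(\bar x)$.

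There is essentially no obstacle here: the argument is a one-line consequence of convexity plus the domain constraint, and in fact it uses neither polyhedrality of $g$ nor lower semicontinuity beyond what is needed to talk about $\partial g$ in the convex sense. The only point worth stating carefully is the case distinction $x\in\dom g$ versus $x\notin\dom g$, so that the sign argument is only invoked where $x_i\ge 0$ is actually available; I would make that explicit to keep the proof airtight.
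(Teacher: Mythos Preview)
Your proof is correct and takes a genuinely different route from the paper. The paper argues via recession cones: it invokes \cite[Theorem~3.6]{rockafellar2009variational} to reduce the claim to showing that the cone $\{w:\,w_I=0,\ w_{I^c}\le 0\}=N_{\R^n_+}(\bar x)$ lies inside the horizon cone $[\partial g(\bar x)]^\infty$, and then uses \cite[Proposition~8.12]{rockafellar2009variational} to identify $[\partial g(\bar x)]^\infty$ with $N_{\dom g}(\bar x)$, after which the inclusion follows from $\dom g\subseteq\R^n_+$. Your argument bypasses all of this machinery and works straight from the convex subgradient inequality, checking termwise that $\langle w,x-\bar x\rangle\le 0$ on $\dom g$. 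What your approach buys is elementarity: no appeal to horizon cones or to results specific to variational analysis, and as you note, no use of lower semicontinuity or polyhedrality. What the paper's approach buys is a structural statement---it explicitly identifies the recession directions of $\partial g(\bar x)$---which is slightly more information, though not needed here. One minor wording glitch: ``since $\bar x\in\dom g$ is convex'' should read ``since $g$ is convex.''
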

\begin{proof}
  In view of \cite[Theorem 3.6]{rockafellar2009variational} and using the fact that $\partial g(\bar x)$ is closed convex set, it suffices to show that 
  \[  \{v\in \R^n:~v_I=0,v_{I^c}\leq 0\} =N_{{\R^n_+}}(x)\subseteq [\partial g(\bar x)]^\infty.                     \]
  Using \cite[Proposition 8.12]{rockafellar2009variational}, we see that $\partial^\infty g(\bar x)=[\partial g(\bar x)]^\infty=N_{\dom g}(\bar x)$. Since $\dom g\subseteq \R^n_+$, we know $N_{\R^n_+}(\bar x)\subseteq N_{\dom g}(\bar x)$, which finishes the proof.
\end{proof}
Next, we study the domain of $\phi$ in \cref{prob1} and $\Phi$ in \cref{sqprob1} along with the domain of their subdifferentials.
\begin{prop}
  \label{nprop2-2}
  For $\phi$ in \cref{prob1} and $\Phi$ in \cref{sqprob1}, we have 
  $$\dom\phi=\dom\partial\phi,\quad \dom\Phi=\dom\partial\Phi=T^{-1}(\dom \phi)=T^{-1}(\dom\partial\phi),$$
  where $T:y\mapsto y^2$ is the square transformation.
\end{prop}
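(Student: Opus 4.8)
The plan is to establish the four sets in the chain in stages, exploiting that the inclusions $\dom\partial\phi\subseteq\dom\phi$ and $\dom\partial\Phi\subseteq\dom\Phi$ are automatic, so that only reverse inclusions require an argument. First, the identity $\dom\Phi=T^{-1}(\dom\phi)$ is immediate from the definitions: since $f$ is finite-valued, $\Phi(y)=\phi(y^2)$ is finite if and only if $g(y^2)<\infty$, i.e.\ if and only if $y^2\in\dom g=\dom\phi$. So after this, the work reduces to proving $\dom\phi=\dom\partial\phi$ and $\dom\Phi=\dom\partial\Phi$.

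For $\dom\phi=\dom\partial\phi$: since $f\in C^2(\R^n)$, the smooth-plus-general sum rule \cite[Exercise 8.8(c)]{rockafellar2009variational} gives $\partial\phi(x)=\nabla f(x)+\partial g(x)$ at every $x\in\dom\phi$, so $\dom\partial\phi=\dom\partial g$. It then remains to invoke the classical fact that a proper polyhedral (hence closed convex) function has a nonempty subdifferential at every point of its effective domain: writing $g$ on its polyhedral domain $P=\dom g$ as a finite maximum of affine functions plus $\iota_P$ and applying the polyhedral sum rule yields $\partial g(\bar x)=\conv\{\text{active gradients at }\bar x\}+N_P(\bar x)\neq\emptyset$ for every $\bar x\in P$. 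Hence $\dom\partial g=\dom g$, so $\dom\partial\phi=\dom\phi$, and composing with $T^{-1}$ also gives $T^{-1}(\dom\phi)=T^{-1}(\dom\partial\phi)$.

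The core of the statement is $\dom\Phi\subseteq\dom\partial\Phi$, i.e.\ $\partial\Phi(\bar y)\neq\emptyset$ whenever $\bar y^2\in\dom\phi$. The standard route would be to compute $\partial\Phi$ from the chain rule for $\Phi=\phi\circ T$, but---as stressed in the introduction---the Jacobian $\nabla T(\bar y)=\diag(2\bar y)$ is singular whenever some $\bar y_i=0$, so the qualification condition behind \cite[Theorem 10.6]{rockafellar2009variational} can fail and this is not available. Instead I would exhibit a \emph{regular} subgradient of $\Phi$ at $\bar y$ directly. Fix $v\in\partial\phi(\bar y^2)$ (nonempty by the previous step) and write $v=\nabla f(\bar y^2)+u$ with $u\in\partial g(\bar y^2)$. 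Convexity of $g$ yields the global minorant $g(x)\geq g(\bar y^2)+\langle u,x-\bar y^2\rangle$, and the $C^2$ Taylor estimate yields $f(x)\geq f(\bar y^2)+\langle\nabla f(\bar y^2),x-\bar y^2\rangle-\tfrac{M}{2}\|x-\bar y^2\|^2$ for $x$ near $\bar y^2$, where $M$ bounds $\|\nabla^2 f\|$ on a bounded neighborhood. Adding these, substituting $x=y^2$, and using the elementary expansions $y^2-\bar y^2=2\bar y\circ(y-\bar y)+(y-\bar y)^2$ and $\|y^2-\bar y^2\|=O(\|y-\bar y\|)$, one checks that
\[
\Phi(y)\;\geq\;\Phi(\bar y)+\langle 2\bar y\circ v,\,y-\bar y\rangle+o(\|y-\bar y\|)\qquad\text{for }y\text{ near }\bar y,
\]
so that $2\bar y\circ v\in\hat\partial\Phi(\bar y)\subseteq\partial\Phi(\bar y)$; hence $\bar y\in\dom\partial\Phi$. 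Combined with the automatic inclusion this gives $\dom\Phi=\dom\partial\Phi$, completing the chain.

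I do not expect a serious obstacle; the only non-routine point is realizing that the failure of the chain-rule qualification is irrelevant here, since the global convex minorant supplied by $u\in\partial g(\bar y^2)$, together with the second-order expansion of $f$ and the $O(\|y-\bar y\|^2)$ expansion of $y\mapsto y^2$, already produces the explicit regular subgradient $2\bar y\circ v$. As a bonus, this previews the description of $\dist(0,\partial\Phi)$ obtained in \cref{sec:foa}, since $2\bar y\circ v$ is precisely an element of $2\bar y\circ(\nabla f(\bar y^2)+\partial g(\bar y^2))$.
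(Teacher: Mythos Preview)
Your argument is correct, but your diagnosis of the difficulty is slightly off, and this leads you to work harder than necessary. You claim that ``the qualification condition behind \cite[Theorem~10.6]{rockafellar2009variational} can fail and this is not available,'' but the inclusion $\nabla T(\bar y)^\top\hat\partial g(T(\bar y))\subseteq\hat\partial(g\circ T)(\bar y)$ in that theorem holds \emph{unconditionally}; the constraint qualification is only needed for the reverse inclusion and for the limiting subdifferential formula. The paper's proof simply invokes this free inclusion to conclude $2\bar y\circ\hat\partial g(\bar y^2)\subseteq\hat\partial(g\circ T)(\bar y)$, which together with the smooth sum rule gives $2\bar y\circ\partial\phi(\bar y^2)\subseteq\hat\partial\Phi(\bar y)$ immediately. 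Your explicit computation via the convex minorant of $g$ and the Taylor expansion of $f$ is essentially a bare-hands verification of exactly this inclusion in the present setting, arriving at the same regular subgradient $2\bar y\circ v$; it is more self-contained but not genuinely different.

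One point where your write-up is actually more complete than the paper's: you explicitly justify $\dom\partial g=\dom g$ via the polyhedral representation, whereas the paper only shows $T^{-1}(\dom\partial g)\subseteq\dom\partial(g\circ T)\subseteq T^{-1}(\dom g)$ and then asserts ``the conclusion then follows,'' leaving $\dom g=\dom\partial g$ implicit.
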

\begin{proof}
  Since $f\in C^2(\R^n)$, we know that 
  \[  \dom\phi=\dom g,\quad     \dom\Phi=\dom g\circ T=T^{-1}(\dom g).              \]
  Using \cite[Exercise 8.8(c)]{rockafellar2009variational}, we also see that 
  \[   \dom\partial\phi=\dom\partial g,\quad \dom\partial\Phi=\dom\partial (g\circ T).   \]
  By \cite[Proposition 8.12]{rockafellar2009variational} thanks to the fact that $g$ is a proper polyhedral function, we have 
  \[  \dom\partial g=\dom\hat \partial g.         \]
  Further utilizing \cite[Theorem 10.6]{rockafellar2009variational}, we can deduce that
  \begin{align*}
       &(DT)^\top  \hat\partial g \subseteq \hat\partial (g\circ T)  \\
       \implies& T^{-1}(\dom\hat\partial g)=T^{-1}(\dom\partial g)\subseteq \dom\hat\partial (g\circ T)\subseteq \dom\partial (g\circ T).
  \end{align*}
  On the other hand, we have 
  \[  \dom\partial (g\circ T)\subseteq \dom g\circ T=T^{-1}(\dom g).      \]
  The conclusion then follows.
\end{proof}

\section{First-order analysis}
\label{sec:foa}
As can be seen from the definition of the KL property, establishing the subdifferential of $\Phi$ in \cref{sqprob1} is a key step to determine the KL exponent of $\Phi$. Although $\Phi$ is the composition of $\phi$ in \cref{prob1} and the square transformation, the classical chain rules \cite[Theorem 10.6, Exercise 10.7]{rockafellar2009variational} are not applicable since the square transformation does not admit full rank Jacobian in general. To handle this issue, we notice that for the KL property of $\Phi$, it is essential to characterize $\dist(0,\partial\Phi)$ rather than $\partial \Phi$. Here, we use variational techniques to show that $\dist(0,\partial\Phi)$ can be calculated by using naive chain rule. This relies on the subtle connections between the subderivative, the regular subdifferential, and the subdifferential. We start with the next preliminary result, which extracts the useful features of the subdifferential of a function that can be written as the composition of a polyhedral function and the square transformation. 
\begin{prop}
  \label{prop1}
  Let $h:\R^n\to\R\cup\{\infty\}$ be proper and lsc and set $H(x):=h(x^2)$ for all $x\in\R^n$. Assume $h(x^2)$ is finite and define $I:=\{i\in [n]:~x_i\neq 0\}$. Then, the followings hold:
  \begin{itemize}
    \item[(i)]  For all $v\in \hat\partial H(x)$, we have $\bar v\in \hat\partial H(x)$, where $\bar v_i=v_i$ for $i\in I$, and $\bar v_i=0$ for $i\notin I$. 
    \item[(ii)] If in addition $h$ is polyhedral, then for all $v\in \partial H(x)$, there exists $w\in  \partial h(x^2)$, such that for all $i\in I$ it holds that $v_i=2x_iw_i$. 
  \end{itemize}
\end{prop}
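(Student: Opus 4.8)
The two parts call for different mechanisms: part (i) is a symmetry argument, and part (ii) reduces the non-smooth, non-submersive composition to a slice on which the square map is a local diffeomorphism.

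For (i), the plan is to exploit that $H(x)=h(x^2)$ is invariant under negating any single coordinate. Let $R_i$ be the linear involution flipping the $i$-th coordinate; then $H\circ R_i=H$, and for $i\notin I$ also $R_ix=x$. Since $R_i$ is an orthogonal symmetric matrix, substituting $R_iz$ into the defining inequality of $v\in\hat\partial H(x)$ shows immediately that $R_iv\in\hat\partial H(x)$ whenever $i\notin I$. The finite group $G$ generated by $\{R_i:i\notin I\}$ therefore maps $\hat\partial H(x)$ into itself; because $\hat\partial H(x)$ is convex, $|G|^{-1}\sum_{R\in G}Rv\in\hat\partial H(x)$, and this average is exactly $\bar v$ — the coordinates in $I$ are fixed by $G$, while for each $i\notin I$ the $\pm1$ signs cancel in pairs. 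No polyhedrality is needed here.

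For (ii), I would start from the description of $\partial H(x)$ as the outer limit of regular subgradients: there are $x^k\to x$ and $v^k\to v$ with $v^k\in\hat\partial H(x^k)$ and $H(x^k)\to H(x)$ (the latter since $(x^k)^2\to x^2$ in $\dom h$ and a polyhedral convex function is continuous on its domain). Passing to a subsequence, assume $I\subseteq I_k:=\{i:x^k_i\neq0\}$ for all $k$. Restricting the regular-subgradient inequality at $x^k$ to the affine slice $\{z:z_i=x^k_i\text{ for }i\notin I\}$ gives $v^k_I\in\hat\partial\big[z_I\mapsto h(z_I^2,(x^k_{I^c})^2)\big](x^k_I)$; on a neighborhood of $x^k_I$ the map $z_I\mapsto z_I^2$ is a $C^1$ diffeomorphism (all coordinates nonzero), so the exact chain rule for composition with a diffeomorphism — which needs no constraint qualification, the Jacobian $\diag(2x^k_I)$ being invertible — together with convexity of the polyhedral function $y_I\mapsto h(y_I,(x^k_{I^c})^2)$ produces $u^k$ with $v^k_i=2x^k_iu^k_i$ for $i\in I$, where $u^k$ is a partial subgradient of $h$ in the $I$-coordinates at $(x^k)^2$. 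The crux is then an extension lemma: for proper polyhedral convex $h$ and $\bar y\in\dom h$, every partial subgradient $p$ of $h$ in the $I$-coordinates at $\bar y$ extends to some $w\in\partial h(\bar y)$ with $w_I=p$. I would prove this by setting $\hat h(z):=h(z)-\langle p,z_I\rangle$ and $\beta(z_{I^c}):=\inf_{z_I}\hat h(z_I,z_{I^c})$: then $\beta$ is polyhedral convex (an inf-projection of a polyhedron), $\bar y_{I^c}\in\dom\beta$ with $\beta(\bar y_{I^c})=\hat h(\bar y)$ because the partial-subgradient property makes $\bar y_I$ a minimizer of the slice, and $\beta$ is proper since each slice of $\hat h$ is a polyhedral convex function with nonnegative recession function (a subgradient being dominated by the recession slope), hence bounded below; therefore $\partial\beta(\bar y_{I^c})\neq\emptyset$, and any $q$ there yields $w:=(p,q)\in\partial h(\bar y)$.

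Applying the extension lemma at $\bar y=(x^k)^2$ gives $u^k\in P_I\big(\partial h((x^k)^2)\big)$, where $P_I$ denotes the coordinate projection; since $u^k_i=v^k_i/(2x^k_i)\to v_i/(2x_i)$ for $i\in I$ and $(x^k)^2\to x^2$, outer semicontinuity of the polyhedral set-valued map $y\mapsto P_I(\partial h(y))$ — its graph is a projection of $\gph\partial h$, hence a finite union of polyhedra, hence closed — forces $(v_i/(2x_i))_{i\in I}\in P_I(\partial h(x^2))$, which is exactly the assertion. The main obstacle is precisely the one flagged in the introduction: the Jacobian of the square map is not surjective and can even vanish, so the standard chain rules and constraint qualifications fail; the slicing trick converts this locally into a diffeomorphism, but only at the cost of a partial subgradient of $h$, and it is the extension lemma — resting essentially on polyhedrality, via the inf-projection staying polyhedral and hence subdifferentiable on its whole domain — that repairs the loss. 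I would also attend to the routine bookkeeping: that the subsequence can be chosen so $I_k$ is constant, and that $(x^k)^2$ stays in $\dom h$ so the continuity and outer-semicontinuity statements apply.
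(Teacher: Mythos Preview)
Your proposal is correct and, for part~(ii), takes a genuinely different route from the paper.

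For part~(i) both arguments exploit the sign-flip invariance $H\circ R_i=H$; the paper pushes this through the subderivative (showing $dH(x)(y_1;y_2)=dH(x)(y_1;-y_2)$ and then using the characterization $\hat\partial H(x)=\{v:\langle v,\cdot\rangle\le dH(x)(\cdot)\}$), whereas your group-averaging over $\{R_i:i\notin I\}$ combined with convexity of $\hat\partial H(x)$ is a cleaner one-line version of the same idea.

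For part~(ii) the two strategies diverge. The paper compares subderivatives directly: it builds a curve $t\mapsto x+tq(t)$ whose square is $y+t(w_1;0)$, deduces $dH(x)((w_1/(2x_1);0))\le dh(y)((w_1;0))$, and reads off from this that $\Pi_{[x]}(\hat\partial H(x))\subseteq\Pi_r(\partial h(y))$ by passing through the auxiliary function $\tilde h=h+\iota_{S_{I^c}}$ whose subdifferential is exactly $\partial h(y)+N_{S_{I^c}}(y)$. You instead restrict to the affine slice $\{z:z_{I^c}=x^k_{I^c}\}$ on which $z_I\mapsto z_I^2$ is a genuine diffeomorphism, invoke the exact chain rule there, and then repair the loss of the $I^c$-coordinates via an extension lemma for partial subgradients of polyhedral convex functions. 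Both proofs close the limiting step by a polyhedrality-based closedness argument (the paper via $\partial h(y^k)\subseteq\partial h(y)$ eventually, you via the closed graph of $y\mapsto P_I(\partial h(y))$). What your route buys is a more modular decomposition---the extension lemma is a clean standalone fact---while the paper's subderivative comparison avoids introducing the inf-projection machinery and yields in passing the reverse inclusion $\Pi_r(\partial h(y))\subseteq\Pi_{[x]}(\hat\partial H(x))$ as well.

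One small point to tighten in your extension lemma: the justification ``each slice of $\hat h$ has nonnegative recession function (a subgradient being dominated by the recession slope)'' literally applies only to the slice at $\bar y_{I^c}$, since $p$ is a subgradient only there. To get it for every slice you should add the observation that the recession cone of a polyhedral epigraph is base-point independent, so the slice-recession function does not depend on $z_{I^c}$; with this, properness of $\beta$ follows and the rest of your argument goes through.
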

\begin{proof}
   To simplify the notation, we assume $I=[r]$, and rewrite $x=(x_1;x_2)$ with $x_1\in \R^r$ and $x_2\in \R^{n-r}$. Notice that $H(y_1;y_2)=H(y_1;-y_2)$ for all $y_1\in \R^r$ and $y_2\in \R^{n-r}$, which implies that
   \begin{equation}
    \label{eq1}
    dH(x)(y_1;y_2)=dH(x)(y_1;-y_2). 
   \end{equation} 
   Suppose $v=(v_1;v_2)\in\hat\partial H(x)$, by \cite[Exercise 8.4]{rockafellar2009variational}, we know that 
   \begin{equation}
    \label{eq2}
    \begin{aligned}
           dH(x)(y_1;y_2) &\geq  \langle v_1,y_1\rangle+\langle v_2,y_2\rangle, \\    dH(x)(y_1;-y_2)&\geq \langle v_1,y_1 \rangle -\langle v_2,y_2\rangle\quad \forall y_1\in \R^r,~y_2\in \R^{n-r}.
    \end{aligned}
   \end{equation}
   Utilizing \eqref{eq1} and \eqref{eq2}, we know that 
   \begin{equation}
    \label{eq3}
    \begin{aligned}
      dH(x)(y_1;y_2)&=dH(x)((y_1,-y_2))\\ &\geq \max\{ \langle v_1,y_1\rangle+\langle v_2,y_2\rangle,\langle v_1,y_1\rangle-\langle v_2,y_2\rangle      \}\\&
      =\langle v_1,y_1 \rangle+|\langle v_2,y_2\rangle|\geq  \langle v_1,y_1\rangle.
    \end{aligned}
   \end{equation}
   Recalling \cite[Exercise 8.4]{rockafellar2009variational}, we know $\bar v=(v_1;0)\in \hat\partial H(x)$. 

   Next, we assume $h$ is polyhedral and set $y=x^2$. By \cite[Proposition 10.21]{rockafellar2009variational}, we have 
   \begin{equation}
    \label{eq4}
       \forall w\in \R^n,\quad dh(y)(w)=h'(y;w).
   \end{equation}
   Let $w_1\in \R^r$ be arbitrary. Without loss of generality, we assume $x\geq 0$. Setting $q_i(t)=\frac{\sqrt{x_i^2+(w_1)_it}-x_i}{t}$ for $i\in I$ and $q_i(t)=0$ for $i\notin I$. Then, for all $t>0$ and $i\in I$, we know $(x_i+tq_i(t))^2=x_i^2+t(w_1)_i$. Moreover, $\lim_{t\downarrow 0} q_i(t)=\frac{w_i}{2x_i}$ for all $i\in I$. Consequently, we have 
   \begin{equation}
    \label{eq5}
    \begin{aligned}
          dH(x)((w_1/(2x_1);0))&\leq \lim_{t\downarrow 0}\frac{H(x+tq(t))-H(x)}{t}\\
          &= \lim_{t\downarrow 0}\frac{h(y+(w_1;0))-h(y)}{t}\\
          &=dh(y)((w_1;0)).
    \end{aligned}
   \end{equation}
   Define a new function $\tilde h=h+\iota_{S_{I^c}}$, where $S_{I^c}=\{v\in \R^n:~v_i=0,~\forall i\in I^c\}$. In this case, we have:
   \begin{equation}
    \label{eq8}
   \begin{aligned}
         \hat\partial \tilde h(y)&\overset{\rm (a)}{=}\partial \tilde h\overset{\rm (b)}{=} \partial h(y)+\partial \iota_{S_{I^c}}(y)\overset{\rm (c)}{=}\partial h(y)+N_{S_{I^c}}(y)\overset{\rm (d)}{=}\hat\partial h(y)+N_{S_{I^c}}(y)\\
         &\overset{\rm (e)}{=}\{v=(v_1;v_2)\in \R^n:~\exists w=(w_1;w_2)\in \partial \hat h(y),w_1=v_1\},
   \end{aligned}
   \end{equation}
   where (a) and (d) follows from \cite[Proposition 8.12]{rockafellar2009variational}, (b) follows from \cite[Exercise 10.22]{rockafellar2009variational}, (c) follows from \cite[Exercise 8.14]{rockafellar2009variational}, and (e) follows from the fact that $N_{S_{I^c}}(x):=\{v\in \R^n:~v_i=0,~\forall i\in I \}$. Now assume $v=(v_1;v_2)\in \hat\partial H(x)$. By \cite[Exercise 8.4]{rockafellar2009variational}, we see that 
   \begin{equation}
    \label{eq7}
    \begin{aligned}
      \forall w_1\in \R^r,~~\langle v_1,\frac{w_1}{2x_1} \rangle &\leq dH(x)((w_1/(2x_1);0))\overset{\rm (a)}{\leq} dh(y)((w_1;0))\\
      &\overset{\rm (b)}{=}h'(y;(w_1;0))\overset{\rm (c)}{=}\tilde h'(y;(w_1;0))\overset{\rm (d)}{=}d\tilde h(y)((w_1;0)),      
    \end{aligned}
   \end{equation}
    where (a) follows from \eqref{eq5}, (b) and (d) follows from \cite[Proposition 10.21]{rockafellar2009variational}, (c) follows from direct calculation by using the special form of $\iota_{S_{I^c}}$. Also, notice that 
    \[ \forall \R^{n-r}\ni w_2\neq 0,~w_1\in \R^r,\quad d\tilde h(y)((w_1;w_2))=\tilde h'(y;(w_1;w_2))=\infty.    \]
    This together with \eqref{eq7} imply that $(v_1/(2x_1),0)\in \hat\partial \tilde h(y)$ by using \cite[Exercise 8.4]{rockafellar2009variational}. Moreover, by using \eqref{eq8}, we have proved that 
    \begin{equation}
      \label{sub_hat}
      \forall v=(v_1;v_2)\in \hat\partial H(x),~~\exists w=(w_1;w_2)\in  \partial h(y),~~w_1=2x_1\circ v_1.
    \end{equation}
    Now suppose $v=(v_1;v_2)\in \partial H(x)$. By \cite[Defintion 8.3]{rockafellar2009variational}, we are able to find $x^k=(x^k_1;x^k_2)\to x$ and $v^k=(v^k_1;v^k_2)\to v$ with $v^k\in \hat\partial H(x^k)$. We set $y^k=(x^k)^2$. Consider the mapping $\Pi_{[x]}: \R^n\to \R^r$ with $\Pi_{[x]}((z_1;z_2))=z_1/(2x_1)$ for all $z_1\in\R^r$ and $z_2\in \R^{n-r}$, and $\Pi_r:~\R^n\to \R^r$ being the natural projection onto the first $r$ components. Since $x^k\to x$, we may assume $x^k_1>0$ for all $k\in\n$. The result in \eqref{sub_hat} implies that 
    \begin{equation}
      \label{sub_eq1}
      \Pi_{[x^k]}(\hat\partial H(x^k))\subseteq \Pi_r(\partial h(y^k)).
    \end{equation}
    Using \cite[Theorem 10.6]{rockafellar2009variational}, we have 
    \begin{equation}
      \label{sub_eq2}
      2x^k\circ \hat\partial h(y^k)= 2x^k\circ \partial h(y^k)  \subseteq \hat\partial H(x^k)\implies \Pi_r(\partial h(y^k))\subseteq \Pi_{[x^k]}(\hat\partial H(x^k)) .
    \end{equation}
    Therefore, we have 
    \begin{equation}
      \label{eq9}
       \Pi_x(\hat\partial H(x))=\Pi_r(\partial h(y)), \quad \Pi_{x^k}(\hat\partial H(x^k))=\Pi_r(\partial h(y^k)),
    \end{equation}
     Using \cite[Proposition 3.3]{mordukhovich2016generalized}, we may assume that
    \begin{equation}
      \label{eq10}
      \forall k\in \n,\quad \partial h(y^k)\subseteq \partial h(y). 
    \end{equation}
    Consequently, we can deduce that
    \[    \Pi_{[x^k]}(v^k)\in    \Pi_{[x^k]}(\hat\partial H(x^k))=\Pi_r(\partial h(y^k))\subseteq \Pi_r(\partial h(y)).                       \]
    Notice that the set $\partial h(y)$ is a polyhedral set by \cite[Proposition 10.21]{rockafellar2009variational}, and then $\Pi_r(\partial h(y))$ is also polyhedral by \cite[Theorem 19.3]{rockafellar1970convex}, and hence closed. Consequently, we have 
    \[ \Pi_{[x]}(v)=\lim_{k\to\infty}\Pi_{[x^k]}(v^k)\in \Pi_r(\partial h(y)),      \]
    where the first equality follows from direct calculation. This proves (ii). 
\end{proof}
Now, we are able to provide the characterization of $\dist(0,\partial\Phi)$.
\begin{prop}
  \label{prop1-2}
 Let $\Phi$ be defined in \eqref{sqprob1}. Then for all $y\in \dom \partial\Phi$, it holds that 
 \[    \dist(0,\partial \Phi(y))=2\,\dist(-y\circ \nabla f(y^2),y\circ \partial h(y^2))=2\,\dist(0,y\circ \partial\phi(y^2)).            \]
\end{prop}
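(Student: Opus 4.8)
The plan is to prove two inequalities. Throughout, set $x=y^2$ and let $I=\{i\in[n]:y_i\neq 0\}$; note $\Phi(y)=\phi(y^2)=f(y^2)+g(y^2)$ with $g$ polyhedral and $\dom g\subseteq\R^n_+$, so we may apply \cref{prop1} with $h=g$ (the smooth part $f(y^2)$ contributes the gradient $2y\circ\nabla f(y^2)$ by the standard chain rule, and subdifferential sum rules from \cite[Exercise 8.8(c)]{rockafellar2009variational} let us peel it off on both sides). The two claimed expressions on the right are equal because $y\circ\partial\phi(y^2)=y\circ(\nabla f(y^2)+\partial g(y^2))=y\circ\nabla f(y^2)+y\circ\partial g(y^2)$, so it suffices to work with the middle expression.

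First I would prove $\dist(0,\partial\Phi(y))\geq 2\,\dist(-y\circ\nabla f(y^2),\,y\circ\partial g(y^2))$. Take any $v\in\partial\Phi(y)$; after subtracting the smooth part we get $v-2y\circ\nabla f(y^2)\in\partial(g\circ T)(y)$. By \cref{prop1}(ii) applied to $h=g$, there exists $w\in\partial g(y^2)$ with $(v-2y\circ\nabla f(y^2))_i=2y_iw_i$ for all $i\in I$. For $i\notin I$ we have $y_i=0$, so $(y\circ\nabla f(y^2))_i=0=(y\circ w)_i$ and the $i$-th coordinate of $v-2y\circ\nabla f(y^2)-2y\circ w$ is just $v_i$; hence $\|v-2y\circ\nabla f(y^2)-2y\circ w\|^2=\sum_{i\notin I}v_i^2\geq 0$, giving $\|v\|\geq\|2y\circ\nabla f(y^2)+2y\circ w\|\cdot(\text{no—})$ — more carefully, I would instead project: since the coordinates in $I^c$ of $2y\circ\nabla f(y^2)+2y\circ w$ vanish, $\|v\|\geq\|P_I v\|=\|P_I(2y\circ\nabla f(y^2)+2y\circ w)\|=\|2y\circ\nabla f(y^2)+2y\circ w\|$, where $P_I$ is the coordinate projection onto $I$. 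Thus $\|v\|\geq 2\|y\circ\nabla f(y^2)+y\circ w\|\geq 2\,\dist(-y\circ\nabla f(y^2),y\circ\partial g(y^2))$, and taking the infimum over $v$ yields the inequality. (Using \cref{prop1}(i) one can even arrange that the minimal-norm $v$ has $v_{I^c}=0$, streamlining this step.)

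For the reverse inequality $\dist(0,\partial\Phi(y))\leq 2\,\dist(-y\circ\nabla f(y^2),y\circ\partial g(y^2))$, I would exhibit explicit subgradients. Since $g$ is polyhedral, $\partial g(y^2)=\hat\partial g(y^2)$ by \cite[Proposition 8.12]{rockafellar2009variational}, so for any $w\in\partial g(y^2)$ the classical chain rule \cite[Theorem 10.6]{rockafellar2009variational} for $\hat\partial$ gives $2y\circ w\in\hat\partial(g\circ T)(y)\subseteq\partial(g\circ T)(y)$, and adding the smooth part gives $2y\circ\nabla f(y^2)+2y\circ w\in\partial\Phi(y)$. Therefore $\dist(0,\partial\Phi(y))\leq\|2y\circ\nabla f(y^2)+2y\circ w\|=2\|y\circ(\nabla f(y^2)+w)\|$ for every $w\in\partial g(y^2)$; taking the infimum over $w\in\partial g(y^2)$ — and noting that the distance $\dist(-y\circ\nabla f(y^2),y\circ\partial g(y^2))$ is, by definition, $\inf_{w\in\partial g(y^2)}\|y\circ\nabla f(y^2)+y\circ w\|$ since $y\circ\partial g(y^2)$ is a polyhedral (hence closed) set whose points are exactly $\{y\circ w:w\in\partial g(y^2)\}$ — gives the claim. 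A small point to check is that the infimum defining the right-hand side is attained, which follows from polyhedrality of $\partial g(y^2)$ (\cite[Proposition 10.21]{rockafellar2009variational}) and closedness of linear images of polyhedral sets (\cite[Theorem 19.3]{rockafellar1970convex}).

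The main obstacle is the first (lower bound) inequality, and specifically the passage from a general $v\in\partial\Phi(y)$ — which we cannot characterize completely because $DT$ is not surjective — to a matching $w\in\partial g(y^2)$; this is exactly what \cref{prop1}(ii) buys us, so the real content is that the information lost in the uncharacterizable coordinates $I^c$ only increases the norm and hence is harmless for the distance computation. The bookkeeping to separate off the $C^2$ term $f\circ T$ cleanly (so that the sum-rule inclusions go in the directions we need, without requiring any constraint qualification since $f(\cdot^2)$ is smooth) is routine but should be stated carefully.
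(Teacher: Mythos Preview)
Your proposal is correct and follows essentially the same approach as the paper: both directions use exactly the same ingredients (the sum rule \cite[Exercise 8.8(c)]{rockafellar2009variational} to peel off the smooth part, \cref{prop1}(ii) for the lower bound via the coordinates in $I$, and the chain rule inclusion $2y\circ\partial g(y^2)\subseteq\hat\partial(g\circ T)(y)$ for the upper bound). Your projection argument via $P_I$ is just a rephrasing of the paper's coordinatewise computation; the only cosmetic difference is that the paper fixes the minimizing $v$ at the outset (using closedness of $\partial(g\circ T)(y)$) rather than taking the infimum at the end, and your parenthetical about using \cref{prop1}(i) should be dropped since that item concerns $\hat\partial$ rather than $\partial$.
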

\begin{proof}
  By using \cite[Exercise 8.8(c)]{rockafellar2009variational}, we see that
  \begin{equation}
    \label{eq2-0}
   \forall y\in \dom\Phi,~ \partial \Phi(y)=2y\circ \nabla f(y^2)+\partial (g\circ T)(y),
  \end{equation}
   where $T(y)=y^2$ for all $y\in \R^n$. Let $y\in \dom\partial\Phi$ be given. By \cite[Theorem 8.6]{rockafellar2009variational}, the set $\partial g\circ T$ is closed. Therefore, we are able to find a vector $v\in \partial (g\circ T)(y)$ such that $\dist(0,\partial\Phi(y))=\|v+2y\circ \nabla f(y^2)\|.$ Moreover, using \cref{prop1-2}, we know there exists $w\in \partial g(y^2)$ such that 
  \begin{equation}
    \label{eq2-1}
  \forall i\in I,~~2y_iw_i=v_i,\quad I=\{i\in [n]:~y_i\neq 0\}.
  \end{equation}
   In this case, we have:
  \begin{equation}
   \label{eq2-2} 
   \begin{aligned}
    \|v+2y\circ \nabla f(y^2)\|^2&=\sum_{i=1}^n(v_i+2y_i(\nabla f(y^2))_i)^2
    \\
    &\overset{\rm (a)}{=}\sum_{i\in I}(v_i+2y_i(\nabla f(y^2))_i)^2   +\sum_{i\notin I}v_i^2 \\
    &\geq \sum_{i\in I}(v_i+2y_i(\nabla f(y^2))_i)^2
    \\&\overset{\rm (b)}{=}\sum_{i\in I}(2y_iw_i+2y_i(\nabla f(y^2))_i)^2\\
    &\overset{\rm (c)}{=}\sum_{i=1}^n(2y_iw_i+2y_i(\nabla f(y^2))_i)^2\\
    &\overset{\rm (d)}{\geq} 4\dist^2(-y\circ \nabla f(y^2),y\circ \partial h(y^2)),
   \end{aligned}
  \end{equation}
  where in (a) and (c) we have used the fact that $y_i=0$ for all $i\notin I$, (b) follows from \eqref{eq2-1}, and (d) follows from that $w\in \partial g(y^2)$. On the other hand, we also have
  \begin{align*}
    2y\circ \hat\partial h(y^2)\overset{\rm (a)}{=} 2y\circ\partial h(y^2) \overset{\rm (b)}{\subseteq} \hat\partial (g\circ T)(y) \subseteq  \partial (g\circ T)(y) ,
  \end{align*}
  where (a) follows from \cite[Proposition 8.12]{rockafellar2009variational} and (b) follows from \cite[Theorem 10.6]{rockafellar2009variational}. This further implies that 
  \begin{align*}
    \dist(0,\partial\Phi(y))=\dist(-2y\circ \nabla f(y^2),\partial (g\circ T)(y))\leq 2\dist(-y\circ \nabla f(y^2),y\circ \partial h(y^2)).    
  \end{align*}
  Combined with \eqref{eq2-2}, we have finished the proof. The last equality follows from \cite[Exercise 8.8(c)]{rockafellar2009variational}.
\end{proof}

\section{Second-order variational analysis}
\label{sec:soa}
Let $H=g\circ T$, where $g:\R^n\to \R\cup\{\infty\}$ with $\dom g\subseteq \R^n_+$ is polyhedral and $T:\R^n\to \R^n$ be defined as $T(x)=x^2$. In general, calculating the second subderivative of $H$ would be difficult since the mapping $T$ does not have a full rank Jacobian matrix. In this section, we aim to provide a characterization of the second subderivative of $H$ in a linear subspace. We start with a technical result concerning the inf-convolution of two polyhedral functions.
\begin{lemma}
  \label{lemma_inf_conv}
  Let $f:\R^n\to \R\cup\{\infty\}$ and $g:\R^n\to \R\cup\{\infty\}$ be two proper polyhedral functions. Let $h:=f\square g$. Suppose $x\in \R^n$ satisfies that $h(x)<\infty$. Then we have $h(x)=h^{**}(x)$.
\end{lemma}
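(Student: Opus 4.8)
The plan is to reduce the claim to the Fenchel--Moreau theorem by showing that the inf-convolution $h=f\square g$ has a closed --- in fact polyhedral --- epigraph. The only genuinely geometric input needed is that a Minkowski sum of polyhedral sets is polyhedral; everything else is bookkeeping, with a little care required because $h$ may be improper.

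First I would record the elementary inclusions
\[
  \epi f+\epi g\ \subseteq\ \epi h\ \subseteq\ \mathrm{cl}(\epi f+\epi g).
\]
The left inclusion is immediate from $h(u+v)\le f(u)+g(v)$ for $(u,a)\in\epi f$ and $(v,b)\in\epi g$. For the right inclusion: a point $(x,\mu)\in\epi h$ with $\mu>h(x)$ admits $y$ with $f(x-y)+g(y)<\mu$, and then $f(x-y)$ and $g(y)$ are finite (since $f,g$ are proper), so $(x-y,\mu-g(y))\in\epi f$ and $(y,g(y))\in\epi g$ sum to $(x,\mu)\in\epi f+\epi g$; the boundary value $\mu=h(x)$ (when finite) is obtained by letting $\mu\downarrow h(x)$, and when $h(x)=-\infty$ the whole vertical line over $x$ already lies in $\epi f+\epi g$.

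Next I would invoke polyhedrality. Since $f,g$ are proper polyhedral, $\epi f$ and $\epi g$ are nonempty polyhedral subsets of $\R^{n+1}$; hence $\epi f\times\epi g$ is polyhedral, and its image $\epi f+\epi g$ under the linear map $(a,b)\mapsto a+b$ is polyhedral, in particular closed \cite[Theorem 19.3]{rockafellar1970convex}. The displayed sandwich therefore collapses to $\epi h=\epi f+\epi g$, so $h$ is a polyhedral --- hence convex and lower semicontinuous --- function. If $h$ is proper, the Fenchel--Moreau theorem gives $h=h^{**}$ on all of $\R^n$, so in particular $h(x)=h^{**}(x)$. If $h$ is improper, then since a lower semicontinuous convex function cannot take both a finite value and the value $-\infty$ (that would violate lower semicontinuity along the segment joining the two points), $h$ takes only the values $\pm\infty$; the hypothesis $h(x)<\infty$ rules out $h\equiv+\infty$, so $h$ attains $-\infty$, whence $h^*\equiv+\infty$, $h^{**}\equiv-\infty$, and also $h(x)=-\infty$. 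In either case $h(x)=h^{**}(x)$.

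The main obstacle is precisely this last case distinction rather than anything deep: the convenient slogan ``the inf-convolution of polyhedral functions is polyhedral, hence closed, hence equals its biconjugate'' is literally valid only for proper functions, while the hypothesis only guarantees $h(x)<\infty$, which does not by itself exclude $h$ attaining $-\infty$. Closedness of $\epi f+\epi g$ --- the single place where polyhedrality genuinely enters --- is also exactly what makes the proper part of the argument work.
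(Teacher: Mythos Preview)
Your proof is correct and follows essentially the same approach as the paper: establish that $h$ is polyhedral (hence lsc convex), then handle the proper and improper cases separately via Fenchel--Moreau and the fact that an lsc convex function attaining $-\infty$ has $h^{**}\equiv-\infty$. The paper is simply terser---it cites \cite[Corollary 19.3.4]{rockafellar1970convex} for polyhedrality of $h$ rather than reproving it via the epigraph-sum identity, and it splits on whether $h(x)\in\R$ or $h(x)=-\infty$ (implicitly using, as you make explicit, that $h(x)\in\R$ forces properness of an lsc convex $h$).
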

\begin{proof}
  Applying \cite[Corollary 19.3.4]{rockafellar1970convex}, we know $h$ is a polyhedral convex function. In particular, we know $h$ is lsc and convex. If $h(x)\in \R$, then we can apply \cite[Proposition 13.44]{bauschke2017convex} to show that $h^{**}(x)=h(x)$, otherwise if $h(x)=-\infty$, we can use \cite[Proposition 13.16(i)]{bauschke2017convex} to show that $-\infty\leq h^{**}(x)\leq h(x)=-\infty$.  
\end{proof}
Now, we are able to calculate the second subderivative of $H$ in a linear subspace.
\begin{prop}
  \label{prop:secsub}
  Let $g:\R^n\to \R\cup\{\infty\}$ be a proper polyhedral function and $H(y):=g(y^2)$ for all $x\in\R^n$. Let $\bar y\in \dom H$ and set $\bar x=\bar y^2$. Suppose $v\in \partial g(\bar x)$ and $\lambda =2 \bar y \circ v$. Define $I=\{i\in [n]:~\bar x_i\neq 0\}$ and $S_I:=\{z\in \R^n:~z_I=0\}$. Then,  
  \begin{equation}
    \label{eq_defn}
  \forall w\in S_I,\quad  d^2 H(\bar y|\lambda)(w)=2\sup_{p\in \cS(I,v)\cap \partial g(\bar x)}\langle  w_{I^c}^2,p_{I^c}\rangle ,
  \end{equation} 
  where $\cS:[n]\times \R^n\rightrightarrows\R^n$ is defined as $\cS(J,y):=\{z\in \R^n:~z_J=y_J\}$.
\end{prop}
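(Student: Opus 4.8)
The plan is to reduce the computation to the geometry of the single polyhedral set $P:=\partial g(\bar x)$, exploiting that a polyhedral convex function coincides, near any point of its domain, with its first-order expansion. First I would record the local support representation (see \cite[eq.~(4.2.7)]{facchinei2007finite}, \cite[Appendix~B]{ouyang2024kurdyka}, \cite[Example~5.3.17]{milzarek2016numerical}): there is a neighborhood $U$ of $\bar x$ with $g(x)=g(\bar x)+\sigma_P(x-\bar x)$ for all $x\in U$, the identity holding in $\R\cup\{\infty\}$ (so $x\notin\dom g$ corresponds to $\sigma_P(x-\bar x)=\infty$). Since $y\mapsto y^2$ is continuous, this yields $H(y)=H(\bar y)+\sigma_P(y^2-\bar y^2)$ for all $y$ near $\bar y$.

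Next I would substitute this into the definition of the second subderivative $d^2 H(\bar y\,|\,\lambda)$ (cf.~\cite{rockafellar2009variational}). With $y=\bar y+tw'$ one has $y^2-\bar y^2=2t(\bar y\circ w')+t^2(w')^2$ and $t\langle\lambda,w'\rangle=2t\langle v,\bar y\circ w'\rangle$; using $\sigma_P=\sigma_{P-v}+\langle v,\cdot\rangle$ together with positive homogeneity of $\sigma_{P-v}$, routine algebra turns the second-order difference quotient (for $t$ small and $w'$ near $w$) into
\begin{equation*}
  \frac{H(\bar y+tw')-H(\bar y)-t\langle\lambda,w'\rangle}{\tfrac12 t^2}
  \;=\; 2\,\sigma_{P-v}\!\Bigl(\tfrac2t(\bar y\circ w')+(w')^2\Bigr)+2\langle v,(w')^2\rangle .
\end{equation*}
Put $Q:=P-v$, so that $0\in Q$ and $\sigma_Q\geq0$. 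Since $\langle v,(w')^2\rangle\to\langle v,w^2\rangle$ as $w'\to w$, the statement reduces to
\begin{equation*}
  \liminf_{t\downarrow0,\;w'\to w}\;\sigma_Q\!\Bigl(\tfrac2t(\bar y\circ w')+(w')^2\Bigr)\;=\;\sigma_{Q\cap S_I}(w^2),
\end{equation*}
because the substitution $p=q+v$ rewrites $2\sigma_{Q\cap S_I}(w^2)+2\langle v,w^2\rangle$ as $2\sup_{p\in\cS(I,v)\cap P}\langle w_{I^c}^2,p_{I^c}\rangle$ (using $w_I=0$, so $w^2$ is supported on $I^c$).

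For the inequality ``$\geq$'' I would use that every $q\in Q\cap S_I$ satisfies $\langle q,\bar y\circ w'\rangle=0$: indeed $\bar y_i=0$ for $i\in I^c$, while $q_i=0$ for $i\in I$. Hence $\sigma_Q(\tfrac2t(\bar y\circ w')+(w')^2)\geq\langle q,(w')^2\rangle\to\langle q,w^2\rangle$ along any admissible sequence, and the supremum over $q\in Q\cap S_I$ gives the bound. For ``$\leq$'' I would exhibit one recovery sequence. Choose $\zeta^\ast$ attaining $\inf_{\zeta\in\R^{|I|}}\sigma_Q\big((\zeta;w_{I^c}^2)\big)$ (attained when finite, since this section of $\sigma_Q$ is polyhedral and bounded below; if it equals $+\infty$ the conclusion already follows from ``$\geq$''), where $(\zeta;c)$ denotes the vector with $I$-block $\zeta$ and $I^c$-block $c$. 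For small $t>0$, solve componentwise the quadratic $(w_i')^2+\tfrac{2\bar y_i}{t}\,w_i'=\zeta_i^\ast$ ($i\in I$) for its root $w_i'(t)\to0$, and set $w_{I^c}'(t)=w_{I^c}$. Then $\tfrac2t(\bar y\circ w'(t))+(w'(t))^2\equiv(\zeta^\ast;w_{I^c}^2)$ is constant along the sequence, while $w'(t)\to w$ and $(w'(t))^2\to w^2$; hence $\sigma_Q(\tfrac2t(\bar y\circ w'(t))+(w'(t))^2)=\inf_\zeta\sigma_Q\big((\zeta;w_{I^c}^2)\big)$ for all small $t$. Finally I would identify $\inf_\zeta\sigma_Q\big((\zeta;c)\big)$ with $\sigma_{Q\cap S_I}\big((0;c)\big)$ by Fenchel duality: the marginal function $c\mapsto\inf_\zeta\sigma_Q\big((\zeta;c)\big)$ is polyhedral (its epigraph is the coordinate projection of the polyhedral set $\epi\sigma_Q$), hence closed, and a short computation yields its conjugate as $\iota_{\{d:(0;d)\in Q\}}$; biconjugating and specializing $c=w_{I^c}^2$ completes the argument.

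The main obstacle is the ``$\leq$'' direction. The tempting ansatz $w'_I=t\xi_I$ leaves an uncontrolled $O(t^2)$ perturbation inside $\sigma_Q$, and the support function of a possibly unbounded polyhedral set need not be upper semicontinuous at the boundary of its (polyhedral) domain, so such a perturbation can push the limit above the target; solving the defining quadratic exactly — which freezes the argument of $\sigma_Q$ along the recovery sequence — is what removes this difficulty. The remaining delicate point is the pair of polyhedral facts underlying the evaluation of the one-sided marginal of $\sigma_Q$: that it is closed, and that its infimum over a coordinate block is attained whenever finite.
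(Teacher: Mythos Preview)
Your proposal is correct and rests on the same three ingredients as the paper's proof: the local support representation $g(x)=g(\bar x)+\sigma_{\partial g(\bar x)}(x-\bar x)$ near $\bar x$, an exact recovery sequence obtained by solving the quadratics $(w_i')^2+\tfrac{2\bar y_i}{t}w_i'=\zeta_i$ on $I$, and a polyhedral duality identifying $\inf_{\zeta}\sigma_Q((\zeta;w_{I^c}^2))$ with $\sigma_{Q\cap S_I}(w^2)$. The differences are in packaging, and they are all in your favor. First, the paper obtains the lower bound by invoking the second-order chain-rule inequality of Benko and Mehlitz \cite[Corollary~4.3]{benko2023second} together with $d^2g(\bar x|p)\ge0$, whereas you get it in one line from the orthogonality $\langle q,\bar y\circ w'\rangle=0$ for $q\in Q\cap S_I$; this makes the argument self-contained. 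Second, by writing the difference quotient directly as $2\sigma_Q\bigl(\tfrac{2}{t}(\bar y\circ w')+(w')^2\bigr)+2\langle v,(w')^2\rangle$ you can freeze the argument of $\sigma_Q$ along the recovery sequence, so ``$\le$'' reduces immediately to evaluating the marginal infimum; the paper instead passes through $dg(\bar x)(u)=\sigma_{\partial g(\bar x)}(u)$ and then minimizes over the free block $u_I$, which introduces the extra inf-convolution machinery (their Lemma~4.1 and \cite[Proposition~13.24]{bauschke2017convex}) that your direct conjugate computation of the marginal avoids. The only point to keep explicit is that the marginal $c\mapsto\inf_\zeta\sigma_Q((\zeta;c))$ is proper (it equals $0$ at $c=0$ and is $\ge0$ since $0\in Q$) and polyhedral, hence closed, so biconjugation is justified and the infimum is attained whenever finite; you already flag this.
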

\begin{proof}
We first prove that
   \[ \forall w\in S_I,~ d^2H(\bar y|2\bar y\circ v)(w)\leq 2\sup_{p\in S(I,v)\cap \partial g(\bar x)}\langle  w_{I^c}^2,p_{I^c}\rangle.     \]
  Fix $w\in S_I$ and select $u\in \R^n$ such that $u_i=w_i^2$ for all $i\in I^c$. For any $u^k\to u$, we consider the following cases:
  \begin{itemize}
    \item[Case 1:] There exists $k_0\in \n$ such that for all $k\geq k_0$, there exists $i\in I^c$ such that $u^k_i<0$.  By the assumption $\dom g\subseteq \R^n_+$ and $\bar x_i=0$ for all $i\in I^c$ from the definition of the set $I$, we know that 
    \begin{equation}
      \label{infty_I}
     \text{if }u^k_i<0~\text{for some }i\in I^c,~~\text{then }   \forall t>0,~  g(\bar x+tu^k)=\infty,
    \end{equation}
    which further implies that 
    \begin{equation}
      \label{case1eq1}
     \forall t_k\downarrow 0,\quad  \liminf_{k\to\infty} \frac{g(\bar x+t_ku^k)-g(\bar x)}{t_k}=\infty.
    \end{equation}
    \item[Case 2:] The set $J:=\{k\in \n:~ u^k_{I^c}\geq 0\}$ is infinite. Assume $J=\{j_k\}_{k\in\n}$. Since for all $k\notin J$, we have $g(\bar x+tu^k)=\infty$ for all $t>0$ by \cref{infty_I}, we see that 
    \begin{align*}
         \forall t_k\downarrow 0,~~&\liminf_{k\to\infty}   \frac{g(\bar x+t_ku^k)-g(\bar x)-t_k\langle v,u^k \rangle}{t_k}\\
         =&\liminf_{k\to\infty} \frac{g(\bar x+t_{j_k}u^{j_k})-g(\bar x)-t_{j_k}\langle v,u^{j_k} \rangle  }{t_{j_k}}.
    \end{align*}
   Fix $t_k\downarrow 0$ to be arbitrary. In this case, we set $w^k\in \R^n$ in the following manner:
    \begin{equation}
      \label{defnwk_61}
     \forall k\in \n,~w^k_i=\begin{cases}
      \sgn(w_i)\sqrt{|u^{j_k}_i|} & \text{if }  i\in I^c, \\
      \frac{-\bar y_i-\sqrt{\bar y_i^2+t_{k}^2u^{j_k}_i}}{t_{k}}  & \text{if }  i\in I,~\bar y_i<0, \\
      \frac{-\bar y_i+\sqrt{\bar y_i^2+t_{k}^2u^{j_k}_i}}{t_{k}}   & \text{if }  i\in I,~\bar y_i>0.
     \end{cases}
    \end{equation}
    Notice that $t_{k}\downarrow 0$ and $\bar y_i\neq 0$ for $i\in I$, so we may assume $w^k_i$ is well-defined for all $k\in\n$ by shifting the indices if necessary. Let us also note that 
    \begin{equation}
      \lim_{k\to\infty} w^k_i=\begin{cases}
        \sgn(w_i)\sqrt{|u_i|}\overset{\rm (a)}{=}w_i &\text{if } i\in I^c,\\
        0 &\text{if }i\in I,
      \end{cases}
    \end{equation}
    where in (a) we have used the fact that $u_i=w_i^2$ for $i\in I^c$. Since $w\in S_I$, we see that $w^k\to w$. Therefore, by the definition of second subderivative, we have 
    \begin{equation}
      \begin{aligned}
       d^2H(\bar y|2\bar y\circ v)(w)&\leq \liminf_{k\to\infty}\frac{H(\bar y+t_{k}w^k)-H(\bar y)-t_k\langle 2\bar y\circ v,w^k \rangle}{\frac{1}{2}t_k^2} \\
        &\overset{\rm (a)}{=} \liminf_{k\to\infty}\frac{g((\bar y+t_kw^k)^2)-g(\bar x)-t_k\langle 2\bar y\circ v,w^k \rangle}{\frac{1}{2}t_k^2} \\
        &\overset{\rm (b)}{=} \liminf_{k\to\infty}\frac{g((\bar y+t_kw^k)^2)-g(\bar x)}{\frac{1}{2}t_k^2}-\lim_{k\to\infty}\frac{\langle 2\bar y\circ v,w^k\rangle}{\frac{1}{2}t_k} \\
        &\overset{\rm (c)}{=} \liminf_{k\to\infty}\frac{g(\bar x+t_k^2u^k)-g(\bar x)}{\frac{1}{2}t_k^2}-2\langle v_{I} , u_{I}\rangle,
      \end{aligned}
    \end{equation}
    where in (a) we have used $H=g\circ T$,~(b) holds because the limit of the second term exists, and in (c) we have used \cref{defnwk_61} to show $\bar x+t_k^2u^k=(\bar y+t_kw^k)^2$, and to calculate $\lim\limits_{k\to\infty}\frac{w^k_i}{t_k}=\frac{u_i^{j_k}}{2y_i}$ for $i\in I^c$, and we notice that $\bar y_i=0$ for $i\in I^c$. Notice $t_k\downarrow 0$ is chosen arbitrarily, replacing $t_k^2$ by 
    .
    $t_k$, we see that 
    \begin{equation}
      \label{case2eq1}
    \forall~t_k\downarrow 0,\quad  d^2H(\bar y|2y\circ v)(w)\leq \liminf_{k\to\infty}\frac{g(\bar x+t_k^2u^k)-g(\bar x)}{\frac{1}{2}t_k^2}-2\langle v_{I} , u_{I}\rangle.
    \end{equation}
  \end{itemize} 
  In summary, for all $t_k\downarrow 0$ and all $u^k\to u$ it holds that 
  \begin{equation}
    \label{ineq6-1-1}
    d^2H(\bar y|2\bar y\circ v)(w)\leq \liminf_{k\to\infty}\frac{g(\bar x+t_k^2u^k)-g(\bar x)}{\frac{1}{2}t_k^2}-2\langle v_{I} , u_{I}\rangle.
  \end{equation}
  By the definition of subderivative in \cref{def_subderivative}, from \cref{ineq6-1-1} we have 
  \begin{equation}
    \label{ineq6-1-2}
    d^2H(\bar y|2y\circ v)(w)\leq 2dg(\bar x)(u)-2\langle v_{I} , u_{I}\rangle\overset{\rm (a)}{=}2\sigma_{\partial g(\bar x)}(u)- 2\langle v_{I} , u_{I}\rangle ,
  \end{equation}
  where in (a) we have used \cite[Example 7.27]{rockafellar2009variational} to show the subdifferential regularity of proper and lsc convex functions, and hence \cite[Theorem 8.30]{rockafellar2009variational} is applicable.
  Recall that $u\in \R^n$ is chosen such that $u_i=w_i^2$ for all $i\in I^c$, and $u_{I}$ can be arbitrary. Also note that the set-valued mapping $\cS:[n]\times \R^n\rightrightarrows \R^n$ is defined as $\cS(J,y):=\{z\in \R^n:~z_J=y_J \}$. By taking the infimum over all the possible $u$ we get that 
  \begin{equation}
    \label{ineq6-1-3}
    \begin{aligned}
      d^2H(\bar y|2\bar y\circ v)(w)&\leq 2 \inf_{u\in \cS(I^c,w^2)} \sigma_{\partial g(\bar x)}(u)-\langle v_{I} , u_{I}\rangle  \\
      &= 2\inf_{u\in \R^n} \sigma_{\partial g(\bar x)}(u)-\langle v_{I} , u_{I}\rangle +\iota_{\cS(I^c,w^2)}(u) \\
      &= 2\inf_{u\in \R^n} \sigma_{\partial g(\bar x)}(u)-\langle v, u\rangle+\langle v_{I^c},u_{I^c} \rangle +\iota_{\cS(I^c,w^2)}(u)       \\
      &\overset{\rm (a)}{=} 2\langle v_{I^c}, w_{I^c}^2 \rangle+2\inf_{u\in \R^n} \sigma_{\partial g(\bar x)}(u)-\langle v, u\rangle+\iota_{\cS(I^c,w^2)}(u)         \\
      &= 2\langle v_{I^c}, w_{I^c}^2 \rangle-2\sup_{u\in \R^n} \langle v, u\rangle-(\iota_{\cS(I^c,w)}(u)+\sigma_{\partial g(\bar x)}(u)) \\
      &=2\langle v_{I^c}, w_{I^c}^2 \rangle-2(\iota_{\cS(I^c,w^2)}+\sigma_{\partial g(\bar x)})^*(v)\\
      &\overset{\rm (b)}{=}2\langle v_{I^c}, w_{I^c}^2 \rangle-2(\sigma_{\cS(I^c,w^2)}\square\iota_{\partial g(\bar x)})^{**}(v) \\
      &\overset{\rm (c)}{=}2\langle v_{I^c}, w_{I^c}^2 \rangle-2(\sigma_{\cS(I^c,w^2)}\square\iota_{\partial g(\bar x)})(v) \\
      &\overset{\rm (d)}{=}2\langle v_{I^c}, w_{I^c}^2 \rangle-  2\inf_{p\in \R^n} \langle  w_I^2,  v_{I^c}-p_{I^c}  \rangle +\iota_{\cS(I,v)\cap \partial f(\bar x)}(p) \\
      &=2\sup_{p\in \cS(I,v)\cap \partial f(\bar x)} \langle  p_{I^c},w_{I^c}^2\rangle,     
    \end{aligned}
  \end{equation}
  where in (a) we have used the definition of $\cS(I^c,w^2)$ to show that for all $u\in \cS(I^c,w^2)$ it holds that $\langle v_{I^c},u_{I^c} \rangle=\langle v_{I^c}, w_{I^c}^2 \rangle$, and in (b) we have used \cite[Proposition 13.24(i)]{bauschke2017convex}. To justify (c), we first notice that $\partial g(\bar x)$ is polyhedral by \cite[Proposition 10.21]{rockafellar2009variational}, and then we can use \cite[Corollary 19.3.4]{rockafellar1970convex} to show that the function $ \sigma_{\cS(I^c,w^2)}\square\iota_{\partial g(\bar x)}$ is polyhedral, which is then lsc and convex. Next, we notice that for all $q\in \R^n$ it holds that
  \begin{equation}
    \label{ineq6-1-4}
    \begin{aligned}
      (\sigma_{\cS(I^c,w^2)}\square\iota_{\partial f(\bar x)})(q)&=\inf_{y\in\R^n} \sigma_{\cS(I^c,w^2)}(q-y)+\iota_{\partial f(\bar x)}(y)\\
      &\overset{\rm (a)}{=}\inf_{y\in \R^n} \langle  w_{I^c}^2,  q_{I^c}-y_{I^c}  \rangle +\sigma_{\cS(I^c,0)}(q-y)+\iota_{\partial g(\bar x)}(y) \\
      &= \inf_{y\in \R^n} \langle  w_{I^c}^2,  q_{I^c}-y_{I^c}  \rangle +\iota_{\cS(I,0)}(q-y)+\iota_{\partial g(\bar x)}(y) \\
      &= \inf_{y\in \R^n} \langle  w_{I^c}^2,  q_{I^c}-y_{I^c}  \rangle +\iota_{\cS(I,q)}(y)+\iota_{\partial g(\bar x)}(y) \\
      &= \inf_{y\in \R^n} \langle  w_{I^c}^2,  q_{I^c}-y_{I^c}  \rangle +\iota_{\cS(I,q)\cap \partial g(\bar x)}(y),
    \end{aligned}  
  \end{equation}
  where in (a) we have used the fact that $\cS(I^c,w^2)=\cS(I^c,0)+ w^2$ for $w\in S_I$. Since $v\in \partial g(\bar x)$, we see that:
  \[   (\sigma_{S(I^c,w^2)}\square\iota_{\partial g(\bar x)})(v)\leq \langle  w_{I^c}^2,  v_{I^c}-y_{I^c}  \rangle +\iota_{\cS(I,v)\cap \partial g(\bar x)}(v)=\langle  w_{I^c}^2,  v_{I^c}-y_{I^c} \rangle <\infty.       \]
  Therefore, by \cref{lemma_inf_conv}, we have justified (c) in \cref{ineq6-1-3}. Moreover, (d) in \cref{ineq6-1-3} follows from \cref{ineq6-1-4}. On the hand, by \cite[Corollary 4.3]{benko2023second}, we have 
  \begin{equation}
    \label{ineq6-1-5}
    \begin{aligned}
      d^2H(\bar y|2\bar y\circ v)(w)&\geq \sup_{p\in \cS(I,v)\cap \partial g(\bar x)}2\langle  w^2,p\rangle+d^2g(\bar x|p)(2\bar y \circ w) \\
      &\overset{\rm (a)}{\geq} \sup_{p\in \cS(I,v)\cap \partial g(\bar x)}2\langle  w^2,p\rangle \\
      & \overset{\rm (b)}{\geq} \sup_{p\in \cS(I,v)\cap \partial g(\bar x)}2\langle  w_{I^c}^2,p_{I^c}\rangle 
    \end{aligned}
  \end{equation}
  where in (a) we have used \cite[Theorem 13.24(a)]{rockafellar2009variational}, and in (b) we have used the fact that $w\in S_I$. The result then follows from \cref{ineq6-1-3} and \cref{ineq6-1-5}.
\end{proof}
By using the characterization of the second subderivative of $\Phi$ in \cref{sqprob1}, we show in the next proposition that how the second subderivative can be used to characterize the stationary points of $\Phi$ which correspond to stationary points of $\phi$ in \cref{prob1}. Let us note a similar result concerning the Hadamard parameterization for $\ell_1$-norm regularized problem appears in \cite[Proposition 3.1]{ouyang2024kurdyka}.
\begin{prop}
\label{prop4-3}
 Let $\phi$ be defined in \cref{prob1} and $\Phi$ be defined in \cref{sqprob1}. For all $y\in \R^n$ and $I:=\{i\in [n]:~y_i\neq 0\}$, the following are equivalent:
  \begin{itemize}
    \item[(i)] $y$ is a stationary point of $\Phi$ and for all $w\in S_I$ it holds that $d^2\Phi(y)(w)\geq 0$, where $S_{I}:=\{v\in \R^n:~v_I=0\}$.
    \item[(ii)] $x:=y^2$ is a stationary point of $\phi$. 
  \end{itemize}
\end{prop}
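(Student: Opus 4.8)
Here is how I would attack Proposition~\ref{prop4-3}.

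The plan is to recast both (i) and (ii) as statements about the polyhedral set $\partial g(x)$, where $x=y^2$, and then bridge them with \cref{prop2-2}. Write $T(y)=y^2$ and $H=g\circ T$, observe that $I=\{i:y_i\ne0\}$ equals $\{i:x_i\ne0\}$, and recall $N_{\R^n_+}(x)=\{z:z_I=0,\ z_{I^c}\le0\}$. One may assume $y\in\dom\Phi$, since otherwise $0\notin\partial\Phi(y)$ and, by \cref{nprop2-2}, $x\notin\dom\phi$, so both (i) and (ii) fail; throughout, $d^2\Phi(y)(w)$ is read as $d^2\Phi(y\,|\,0)(w)$.

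The core of the argument is an exact formula for $d^2\Phi(y\,|\,0)$ on $S_I$ valid whenever $0\in\partial\Phi(y)$. Since $\Phi=f\circ T+H$ with $f\circ T\in C^2$ and $\nabla(f\circ T)(y)=2y\circ\nabla f(x)$, the sum rule $\partial\Phi(y)=2y\circ\nabla f(x)+\partial H(y)$ from \cite[Exercise~8.8(c)]{rockafellar2009variational} shows $\bar u:=-2y\circ\nabla f(x)\in\partial H(y)$. By \cref{prop1}(ii) there is $v\in\partial g(x)$ with $\bar u_i=2y_iv_i$ for $i\in I$, whence $v_I=-(\nabla f(x))_I$ and, since $y_{I^c}=0$, in fact $\bar u=2y\circ v$. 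The second-subderivative sum rule with a $C^2$ summand needs no constraint qualification (the $C^2$ term has an exact second-order Taylor expansion), so $d^2\Phi(y\,|\,0)(w)=\langle w,\nabla^2(f\circ T)(y)w\rangle+d^2H(y\,|\,2y\circ v)(w)$; on $S_I$ one has $\diag(y)w=0$, so the Hessian term reduces to $2\langle w_{I^c}^2,(\nabla f(x))_{I^c}\rangle$, and \cref{prop:secsub} then yields
\begin{equation}\label{plan:key}
  d^2\Phi(y\,|\,0)(w)=2\sup_{p\in\cS(I,v)\cap\partial g(x)}\big\langle w_{I^c}^2,\ (\nabla f(x)+p)_{I^c}\big\rangle\qquad\text{for all }w\in S_I,
\end{equation}
with $(\nabla f(x)+p)_I=0$ for every admissible $p$, since $p_I=v_I=-(\nabla f(x))_I$.

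For $(\mathrm{i})\Rightarrow(\mathrm{ii})$: as $w$ runs over $S_I$ the vector $w_{I^c}^2$ runs over the whole nonnegative orthant of $\R^{I^c}$, so, via \eqref{plan:key}, the condition $d^2\Phi(y\,|\,0)\ge0$ on $S_I$ is exactly $\sigma_Q\ge0$ on $\R^{I^c}_+$, where $Q:=\{(\nabla f(x)+p)_{I^c}:p\in\cS(I,v)\cap\partial g(x)\}$ is a nonempty polyhedron. A standard separation argument for disjoint polyhedra identifies this with $Q\cap\R^{I^c}_+\ne\emptyset$: if the intersection were empty, $Q-\R^{I^c}_+$ is a polyhedron (hence closed) missing the origin, and projecting the origin onto it yields $\eta\ne0$ with $\langle\eta,\cdot\rangle$ strictly negative on $Q-\R^{I^c}_+$, which forces $\eta\ge0$ (else $\langle\eta,\cdot\rangle$ is unbounded below on the cone $\R^{I^c}_+$) and $\sigma_Q(\eta)<0$, contradicting $\sigma_Q\ge0$ on $\R^{I^c}_+$. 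Hence there is $p\in\partial g(x)$ with $(\nabla f(x)+p)_I=0$ and $(\nabla f(x)+p)_{I^c}\ge0$, i.e.\ $-(\nabla f(x)+p)\in N_{\R^n_+}(x)$, and \cref{prop2-2} (with this $p$ and $w=-(\nabla f(x)+p)$) gives $-\nabla f(x)=p+\big(-(\nabla f(x)+p)\big)\in\partial g(x)$, so $0\in\partial\phi(x)$, which is (ii).

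For $(\mathrm{ii})\Rightarrow(\mathrm{i})$: if $-\nabla f(x)\in\partial g(x)$, then $2y\circ(-\nabla f(x))\in2y\circ\hat\partial g(x)\subseteq\hat\partial H(y)\subseteq\partial H(y)$ by \cite[Proposition~8.12, Theorem~10.6]{rockafellar2009variational}, so $0=2y\circ\nabla f(x)+2y\circ(-\nabla f(x))\in\partial\Phi(y)$ and $y$ is stationary for $\Phi$; then \eqref{plan:key} applies, and since $-\nabla f(x)\in\partial g(x)$ with $I$-components $-(\nabla f(x))_I=v_I$, the point $p=-\nabla f(x)$ is admissible in the supremum in \eqref{plan:key} and makes the inner product vanish, so $d^2\Phi(y\,|\,0)(w)\ge0$ for all $w\in S_I$, which is (i). I expect the separation step in $(\mathrm{i})\Rightarrow(\mathrm{ii})$ to be the main obstacle: turning ``the supremum in \eqref{plan:key} is nonnegative in every coordinate-square direction'' into ``$Q$ meets the nonnegative orthant'' is where polyhedrality is genuinely used (it ensures $Q-\R^{I^c}_+$ is closed) and where \cref{prop2-2} gets its foothold, whereas the Hessian computation for $f\circ T$, the identity $\diag(y)w=0$ on $S_I$, and the normalization $\bar u=2y\circ v$ via \cref{prop1}(ii) are routine.
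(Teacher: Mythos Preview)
Your proposal is correct and follows essentially the same route as the paper: both derive the key identity \eqref{plan:key} from \cref{prop:secsub} plus the $C^2$ sum rule, reduce (i) to the statement that the support function of the polyhedral set $Q$ is nonnegative on $\R^{I^c}_+$, convert this into $Q\cap\R^{I^c}_+\neq\emptyset$, and finish with \cref{prop2-2}. The only cosmetic difference is in that conversion step---the paper invokes Fermat's rule for $\sigma_Q+\iota_{\R^{I^c}_+}$ together with the polyhedral subdifferential sum rule, whereas you run a direct projection/separating-hyperplane argument on the closed polyhedron $Q-\R^{I^c}_+$; both arguments exploit polyhedrality for closedness and are equivalent.
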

\begin{proof}
  Using \cite[Theorem 13.14]{rockafellar2009variational} and set $G(t):=g(t^2)$ for all $t\in \R^n$, we see that for all $w\in \R^n$ it holds that
  \begin{equation}
    \label{secsub_phi}
    d^2\Phi(y|0)(w)=d^2 G(y|-2y\circ \nabla f(x))(w)+4\langle y\circ w,\nabla^2f(x) y\circ w\rangle+2\langle \nabla f(x),  w^2 \rangle.
  \end{equation}
  Since for all $w\in S_I$ we have $y\circ w=0$, we further see that 
  \begin{equation}
    \label{secsub_phi1}
    \forall w\in S_I\quad  d^2\Phi(y|0)(w)=d^2 G(y|-2y\circ \nabla f(x))(w)+2\langle \nabla f(x),  w^2 \rangle  ,
  \end{equation}
  Suppose there exists $v\in \partial g(x)$ such that $-2y\circ \nabla f(x)=2 y \circ v $. By \cref{prop:secsub}, we have 
  \begin{equation}
    \label{eq6-3-2}
   \forall w\in S_I,\quad d^2 G(y| 2y\circ v)(w)= \sup_{p\in \cS(I,v)\cap \partial g(\bar x)}2\langle p_{I^c},w_{I^c}^2 \rangle,
  \end{equation}
  where $\cS(I,v):=\{z\in \R^n:~z_I=v_I\}$. Combining \cref{secsub_phi1} and \cref{eq6-3-2} gives that  
  \begin{equation}
    \label{eq6-3-3}
   \forall w\in S_I,\quad d^2 \Phi(y| 0)(w)= \sup_{p\in \cS(I,v)\cap \partial g(x)}2\langle p_{I^c}+[\nabla f(x)]_{I^c},w_{I^c}^2 \rangle,
  \end{equation}
  where we have used the fact that $w_I=0$ since $w\in S_I$. 
  
  Assume (i). Since $y$ is a stationary point $\Phi$, we see that $0\in \partial\Phi(y)$. Using \cref{prop1-2}, we see that there exists $v\in \partial g(x)$ such that $-2y\circ \nabla f(x)=2 y \circ v $. Therefore, \cref{eq6-3-3} holds. The condition $-2y\circ \nabla f(x)=2y\circ v$ is equivalent to that 
  \begin{equation}
    \label{fveq}
        -[\nabla f(x)]_{I}=v_I.
  \end{equation}
  Therefore, $\cS(I,v)=\cS(I,-\nabla f(x))$ and 
  $$\cS(I,v)\cap \partial g( x)+\nabla f(x)=(\cS(I,v)+\nabla f(x))\cap (\partial g(x)+\nabla f(x)) =\cS(I,0)\cap \partial \phi(x), $$
  where we have used \cite[Exercise 8.8(c)]{rockafellar2009variational} to show that $\partial \phi(x)=\nabla f(x)+\partial g(x)$. Consequently, \cref{eq6-3-3} yields that 
  \begin{equation}
    \label{eq6-3-4}
   \forall w\in S_I,\quad d^2 \Phi(y| 2y\circ v)(w)= \sup_{p\in \cS(I,0)\cap \partial\phi(\bar x)}2\langle  p_{I^c},w_{I^c}^2 \rangle,
  \end{equation}
  The condition $d^2\Phi(y)(w)\geq 0$ for all $w\in S_{I^c}$ means that 
  \[  \forall w\in S_{I}, \sup_{p\in \cS(I,0)\cap \partial\phi(\bar x)}\langle  p_{I^c},w_{I^c}^2 \rangle\geq 0,\]
  or equivalently, 
  \begin{equation}
    \label{defnsetc}
    \forall q\in \R_+^{|I^c|},~~\sigma_{C}(q)\geq 0,\text{ where }C=\{p_{I^c}\in \R^{|I^c|}:~p\in \cS(I,0)\cap \partial \phi(\bar x)\}. 
  \end{equation}
 We notice that the set $C$ is a polyhedral set by the fact thats $\cS(I,0)$ and $\partial\phi(x)=\nabla f(x)+\partial g(x) $ are both polyhedral sets and projection of polyhedral set is polyhedra set \cite[Theorem 19.3]{rockafellar1970convex}. Moreover, $C$ is nonempty since $\nabla f(\bar x)+v\in C$ by \cref{fveq}, which further implies $\sigma_C$ is proper polyhedral. This means that 
 \begin{align*}
  &0\in \argmin_{q\in \R^{|I^c|}} \sigma_C(q)+\iota_{\R^{|I^c|}_+}(q)  \iff 0\in \partial (\sigma_C(0)+\iota_{\R^{|I^c|_+}}(0))\\
  &\overset{\rm (a)}{\iff} 0\in \partial \sigma_C(0)+\partial \iota_{\R^{|I^c|_+}}(0) \overset{\rm (b)}{\iff}    C\cap \R^{|I^c|}_+\neq \emptyset,
 \end{align*}
 where in (a) we have used that $0\in \dom \sigma_C\cap \dom\iota_{\R^{|I^c|_+}}$, and they are proper polyhedral to apply \cite[Theorem 23.8]{rockafellar1970convex}, in (b) we have used \cite[Example 11.4]{rockafellar2009variational} to show $\partial \sigma_C(0)=C$ and $\partial \iota_{\R^{|I^c|}_+}(0)=\R^{|I^c|}_-$. Consequently, by the definition of $C$ in \cref{defnsetc}, we see that 
 \begin{equation}
     \exists \lambda \in \partial\phi(x),~\lambda_{I^c}\geq 0,~ \lambda_I=0.
 \end{equation} 
 Then, by \cref{prop2-2}, we know $0\in \partial \phi(\bar x)$, which proves (ii).

 Next, we assume (ii). By \cite[Exercise 8.8(c)]{rockafellar2009variational}, we have 
 \[   \partial\phi(x)=\nabla f(x)+\partial g(x)\overset{\rm (a)}{=}\nabla f(x)+\hat\partial g(x)=\hat\partial \phi(x),               \]
 where (a) follows from \cite[Proposition 8.12]{rockafellar2009variational}. Using \cite[Theorem 10.6]{rockafellar2009variational}, we see that 
 \[    0\in \partial \phi (x)=\hat\partial \phi(x),\quad  0=DT(y)^\top0 \subseteq DT(y)^\top\hat\partial\phi(x)   \subseteq \hat\partial\Phi(y)\subseteq \partial \Phi(y),   \]
 where $T:z\mapsto z^2$ is the square transformation. This proves that $y$ is a stationary point of $\Phi$. Moreover, let $w\in S_I$, using \cref{eq6-3-3}, we see that 
 \[d^2\Phi(y|0)(w)=\sup_{p\in \cS(I,-\nabla f(x))\cap \partial g(x)}2\langle p_{I^c}+[\nabla f(x)]_{I^c},w_{I^c}^2 \rangle \overset{\rm (a)}{\geq} 0,\]
 where in (a) we have used that $-\nabla f(x)\in \partial g(x)$ to show that 
 \[  -\nabla f(x)\in \cS(I,-\nabla f(x))\cap \partial g(x). \]
\end{proof}
\begin{exmp}
\label{exmp4-4}
    Consider \cref{nonnegative_cons_prob} and the corresponding reparameterized model \cref{un_sq_prob} \cite{ding2023squared}. Let $\bar x=\bar y^2$ and set $I=\{i\in [n]:~\bar y_i\neq 0\}$. By 
    \cref{prop4-3}, we see that $\bar x$ is a stationary point of $\phi$ in \cref{nonnegative_cons_prob} if and only if
    \begin{align*}
        d^2\Phi(\bar y)(w)&\overset{\rm (a)}{=}\langle w,\nabla^2\Phi(\bar y)w\rangle\\
        &=2\langle \nabla f(\bar x), w^2\rangle+2\langle y\circ w,\nabla^2f(\bar x)(\bar y\circ w) \rangle\\
        &\overset{\rm (b)}{=}2\langle \nabla f(\bar x),w^2\rangle \geq 0
    \end{align*}
     for $w\in S_I$, where in (a) we have used \cite[Example 13.8]{rockafellar2009variational} to show that $d^2\Phi(\bar y|0)(w)=\langle w, \nabla^2\Phi(\bar y) w\rangle$ for all $w\in \R^n$, and in (b) we have used the fact that for all $w\in S_I$ it holds that $\bar y\circ w=0$. Consequently, all second-order stationary point $y$ of $\Phi$ satisfies that $y^2$ is a stationary point of $\phi$. 
\end{exmp}
\begin{exmp}
    Consider \cref{simplex_cons_prob} and the corresponding reparameterized model \cref{ball_cons_prob} \cite{li2023simplex}. Let $\bar x,\bar y,$ and $I$ be defined as in \cref{exmp4-4}. By \cref{prop4-3}, $\bar x$ is a stationary point of $\phi$ if and only if $\bar y$ is a stationary point of $\Phi$ and $d^2\Phi(\bar y|0)(w)\geq 0$ for all $w\in S_I$. Since the unit sphere is $C^2$-cone reducible \cite[Example 3.135]{bonnans2013perturbation}, using \cite[Exercise 13.18]{rockafellar2009variational} and \cite[Theorem 6.2]{mohammadi2021parabolic}, for all $w\in \R^n$ we have 
    \begin{align*}
         d^2\Phi(\bar y|0)(w)\!&=\!2\langle \bar y \circ w,\nabla^2 f(\bar x) (\bar y\circ w)\rangle+2\langle \nabla f(\bar x),w^2\rangle+d^2\iota_{S^{n-1}}(\bar y|-2\bar y\circ \nabla f(\bar x))(w)\\
         &= 2\langle \bar y \circ w,\nabla^2 f(\bar x) (\bar y\circ w)\rangle+2\langle \nabla f(\bar x),w^2\rangle+\iota_{T_{\cM}(\bar y)}(w)+2\lambda\|w\|^2,
    \end{align*}
    where $\cM$ is the unit sphere in $\R^n$ and $\lambda\in \R$ satisfies that $-2\bar y\circ \nabla f(\bar x)=\lambda \bar y$. By \cite[p. 572]{davis2022proximal}, we see that for all $w\in T_{S^{n-1}}(\bar y)$, it holds that $\nabla^2 \Phi_{\cM}(\bar y)(w)=d^2\Phi(\bar y|0)(w)$, where $\nabla^2 f_{\cM}$ is the Riemannian Hessian of $f$. Therefore, if $\bar y$ is a second-order stationary point of $\Phi$ in the Riemannian setting \cite[Definition 3]{li2023simplex}, then $\bar x$ is a first-order stationary point of $\phi$ by \cref{prop4-3}. 
\end{exmp}
\begin{exmp}
    Consider \cref{prob1} with $g(x)=\iota_{S\cap \R^n_+}$ with $
    S=\{x\in \R^n:~Ax=b\}$ and $A\in \R^{m\times n}$, and its corresponding \cref{sqprob1} \cite{tang2024optimization}. Let $\bar y$ be a stationary point of \cref{sqprob1}, $\bar x=\bar y^2$ and $I=\{i\in[n]:~\bar y_i\neq 0\}$. Set $G=g\circ T$, where $T:y\mapsto y^2$ is the square transformation. Then we know $-2\bar y \circ \nabla f(\bar x)\in \partial G(\bar y)$, and by using \cref{prop1}, we know there exists $\lambda \in \partial g(\bar x)$ such that 
    \begin{equation}
        \label{grad_lambda}
        2\bar y\circ \lambda =-2\bar y\circ \nabla f(\bar x).
    \end{equation}
    By invoking \cite[Theorem 23.8]{rockafellar1970convex}, there exist $\mu\in \R^{m}$ and $\rho\in \R^n_-$ with $\rho_{I}=0$ such that 
    \begin{equation}
        \label{decom_lambda}
        \lambda =A^\top \mu+\rho.
    \end{equation}
     According to the definition of $I$, it holds that $2\bar y\circ \lambda=2\bar y \circ (A^\top\mu)$. Set $\cP=\{y\in \R^n:~A(y^2)=b\}.$ Then $G=\iota_{\cP}$. Now, we are ready to calculate the second subderivative of $G$. Clearly, for all $w\notin T_{\cP}(\bar y)$, it holds that $d^2G(\bar y|2\bar y \circ \lambda)(w)=\infty$. For all $w\in T_{\cP}(\bar y)$, we have 
     \begin{equation}
         \label{exmp_sec_sub}
         \begin{aligned}
        d^2G(\bar y|2\bar y\circ \lambda)(w)&=d^2G(\bar y|2\bar y\circ (A^\top\mu))(w)\\
        &=\liminf_{\tilde w\to w,t_k\downarrow 0}\frac{G(\bar y+t_k\tilde w)-G(\bar y)-t_k\langle 2\bar y\circ (A^\top\mu),\tilde w \rangle}{\frac{1}{2}t^2_k} \\
        &=\liminf_{\overset{t_k\downarrow 0,\tilde w\to w}{\bar y+t_k\tilde w\in \cP}}\frac{-\langle  A^\top \mu ,2t_k\bar y\circ \tilde w \rangle}{\frac{1}{2}t_k^2} \\
        &=\liminf_{\overset{t_k\downarrow 0,\tilde w\to w}{\bar y+t_k\tilde w\in \cP}}\frac{-\langle  \mu ,A(2t_k\bar y\circ \tilde w )\rangle}{\frac{1}{2}t_k^2} \\
        &=\liminf_{\overset{t_k\downarrow 0,\tilde w\to w}{\bar y+t_k\tilde w\in \cP}}\frac{-\langle  \mu ,A((\bar y+t_k\tilde w)^2-\bar y^2 )-t_k^2A(\tilde w^2)\rangle}{\frac{1}{2}t_k^2} \\
        &=2\langle A^\top \mu, w^2 \rangle,
    \end{aligned}
     \end{equation}
    where in the last equation we have used that $A((\bar y+t_k\tilde w)^2)=A(\bar y^2)=b$. The previous calculation also shows that the second subderivative is independent of the choice of $\mu$ and $\rho$. Let $h\in T^2_{\cP}(\bar y, w)$, where $T^2_{\cP}$ is the second-order tangent set \cite[Definition 3.28]{bonnans2013perturbation}. Let $R$ be the range of the matrix $A\diag(\bar y)$, or equivalently $R$ is the linear subspace spanned by the column vectors of $A$ whose index is in $I$. \cite[Proposition 3]{tang2024optimization} shows that $\Pi_{R}A(\bar y\circ h+w^2)=0$. Notice that $A(\bar y \circ h)\in R$ and \cite[Proposition 2]{tang2024optimization} implies that $A(w^2)\in R$, we have 
    \begin{equation}
        \label{eqhw}
        A(\bar y\circ h) =-A(w^2).
    \end{equation}
    Let $F=f\circ T$. Consequently, we have 
    \begin{align*}
        \langle \nabla F(\bar y), h\rangle+\langle w,\nabla^2F(\bar y) w\rangle &=\langle 2\bar y\circ \nabla f(\bar x), h \rangle+\langle w,\nabla^2F(\bar y) w\rangle \\
        &=2\langle \nabla f(\bar x), \bar y\circ h \rangle+\langle w,\nabla^2F(\bar y) w\rangle \\
        &\overset{\rm (a)}{=}-2\langle A^\top\mu+\rho, \bar y\circ h \rangle+\langle w,\nabla^2F(\bar y) w\rangle \\
        &\overset{\rm (b)}{=}-2 \langle A^\top\mu , \bar y\circ h \rangle+\langle w,\nabla^2F(\bar y) w\rangle \\
        &=-2 \langle \mu , A(\bar y\circ h) \rangle+\langle w,\nabla^2F(\bar y) w\rangle \\
        &\overset{\rm (c)}{=}2 \langle \mu , A(w^2) \rangle+\langle w,\nabla^2F(\bar y) w\rangle  \\
        &\overset{\rm (d)}{=} d^2G(\bar y|-2\bar y\circ \nabla f(\bar x))(w)+\langle w,\nabla^2F(\bar y)w\rangle  \\
        &\overset{\rm (e)}{=} d^2\Phi(\bar y|0)(w).
     \end{align*}
    where in (a) we have used \cref{grad_lambda} and \cref{decom_lambda}, in (b) we have used the fact that $\rho_I=0$ and $\bar y_{I^c}=0$, (c) follows from \cref{eqhw}, (d) follows from \cref{exmp_sec_sub} and (e) follows from \cite[Exercise 13.18]{rockafellar2009variational}. Consequently, the second-order necessary condition for $\Phi$ defined in \cite[Definition 4]{tang2024optimization} is essentially the standard second-order necessary condition $d^2\Phi(\bar y|0)\geq 0$. Therefore, if $\bar y$ is a second-order stationary point in the sense of \cite[Definition 4]{tang2024optimization}, then we know $\bar x$ is a stationary point of $\phi$ by \cref{prop4-3}, which recovers \cite[Proposition 4]{tang2024optimization}.
\end{exmp}
\section{KL under strict complementarity condition}
\label{sec:KLstrict}
In this section, we would deduce the KL exponent of $\Phi$ from that of $\phi$. To achieve this goal, we need some notions from variational analysis and convex geometry.
\begin{defn}[{\cite[Definition 3.3]{rockafellar2009variational}}]
  For a nonempty set $C\subseteq \R^n$, the horizon cone $C^\infty$ is defined as:
  \[   C^\infty=\{v\in \R^n:~\exists v^k\in C,~\lambda_k\downarrow 0,~\lambda_kv^k\to v\}.            \]
\end{defn}
\begin{defn}[{\cite[Definition 8.1]{soltan2019lectures}}]
  For a convex set $C\subseteq \R^n$, an exposed face $E$ is a subset of $C$, and either $E=\emptyset$ or there exists $v\in \R^n$ such that $E$ can be written as:
  \[   E=\{x\in C:~\langle v,x \rangle=\sup_{z\in C}\langle z,v\rangle=\sigma_C(v)\}.           \]
  The set $E$ is said to be a proper exposed face of $C$, if in addition $E\neq C$.
\end{defn}
\begin{defn}[{\cite[Definition 8.14]{soltan2019lectures}}]
  Let $C\subseteq \R^n$ be a convex set and $K\subseteq C$ be convex subset of $C$. The intersection of all the exposed faces of $C$ containing $K$ is called the minimal exposed face generated by $K$, and is denoted by $G_{\mathrm{ex}}(K)$.
\end{defn}
\begin{remark}
  \label{rem3-1}
  Using \cite[Example 11.4(a)]{rockafellar2009variational}, we know the set of all the exposed faces of a closed convex set $C$ coincides with $\{\emptyset\}\cup \{\partial\sigma_C(x):~x\in \R^n\}$. Moreover, by \cite[Theorem 8.9]{soltan2019lectures}, the minimal exposed face generated by any subset of $C$ is also an exposed face of $C$.
\end{remark}
The next proposition is elementary, and can be found in \cite[Theorem 2.2]{schneider2014convex}, \cite[Lemma 4.8]{weis2010note}, and also in \cite[Lemma 1.5]{ouyang2023variational2}. We present the proof here for the sake of completeness. 
\begin{prop}
  \label{prop3-1}
  Let $C$ be a nonempty convex set, and $G_{\mathrm{ex}}(x)$ be the minimal exposed face generated by $x\in C$. Then, the followings hold:
  \begin{itemize}
    \item[(i)] If $K$ is an exposed face of $C$ and $x\in \ri(K)$, then $G_{\mathrm{ex}}(x)=K$.
    \item[(ii)] For any $x,y\in C$, $N_C(x)\subseteq N_C(y)$ if and only if $G_{\rm ex}(y)\subseteq G_{\rm ex}(x)$.
  \end{itemize} 
\end{prop}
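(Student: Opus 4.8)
The plan is to establish both statements using the duality between exposed faces and normal cones of a closed convex set, as recorded in Remark~\ref{rem3-1}. For (i), suppose $K$ is an exposed face of $C$ and $x\in\ri(K)$. Since $K$ is an exposed face containing $x$, the definition of the minimal exposed face immediately gives $G_{\mathrm{ex}}(x)\subseteq K$. For the reverse inclusion, I would argue that any exposed face $E$ of $C$ with $x\in E$ must contain all of $K$: if $E=\{z\in C:\langle v,z\rangle=\sigma_C(v)\}$ for some $v$, then $x$ maximizes $\langle v,\cdot\rangle$ over $C$, hence in particular over $K\subseteq C$; but $x\in\ri(K)$, so a linear functional attaining its maximum over $K$ at a relative interior point must be constant on $K$, forcing $K\subseteq E$. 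Intersecting over all such $E$ yields $K\subseteq G_{\mathrm{ex}}(x)$, so equality holds. The only mild subtlety is the standard fact that a linear functional bounded above on a convex set and attaining its supremum at a point of the relative interior is constant on that set; this follows by writing any $y\in K$ as $y=x+t(x-y')$ for small $t>0$ and suitable $y'\in K$ (possible precisely because $x\in\ri K$) and using maximality at $x$.

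For (ii), the cleanest route is to pass through the support function. By Remark~\ref{rem3-1}, $G_{\mathrm{ex}}(x)=\partial\sigma_C(u_x)$ for any $u_x$ at whose argument $x$ maximizes $\langle\cdot,x\rangle$ — more precisely, the minimal exposed face generated by $x$ equals $\partial\sigma_C(u)$ for $u\in\ri N_C(x)$, which is the correct ``generic'' normal direction. Concretely I would use the biconjugate-type relation: for $u\in N_C(x)$ one has $x\in\partial\sigma_C(u)$, and $\partial\sigma_C(u)$ is the exposed face of $C$ exposed by $u$. Then $N_C(x)\subseteq N_C(y)$ means every direction normal at $x$ is also normal at $y$; taking $u\in\ri N_C(x)$, the exposed face $\partial\sigma_C(u)$ exposed by $u$ contains both $x$ (since $u\in N_C(x)$) and $y$ (since $u\in N_C(y)$). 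Because $u\in\ri N_C(x)$, part (i) applied in the polar picture — or directly the fact that $\ri N_C(x)$ consists exactly of the directions exposing the minimal face $G_{\mathrm{ex}}(x)$ — identifies $\partial\sigma_C(u)=G_{\mathrm{ex}}(x)$. Hence $y\in G_{\mathrm{ex}}(x)$, and since $G_{\mathrm{ex}}(x)$ is itself an exposed face (Remark~\ref{rem3-1}) containing $y$, minimality gives $G_{\mathrm{ex}}(y)\subseteq G_{\mathrm{ex}}(x)$.

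For the converse direction of (ii), assume $G_{\mathrm{ex}}(y)\subseteq G_{\mathrm{ex}}(x)$ and take $u\in N_C(x)$; I want $u\in N_C(y)$, i.e. $\langle u,y\rangle=\sigma_C(u)$. The exposed face $F:=\partial\sigma_C(u)$ contains $x$, hence contains $G_{\mathrm{ex}}(x)\supseteq G_{\mathrm{ex}}(y)\ni y$; therefore $y\in F$, which is exactly $\langle u,y\rangle=\sigma_C(u)$, so $u\in N_C(y)$. This direction is the easy one once the face/normal-cone dictionary is in place.

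The main obstacle is keeping the relative-interior bookkeeping honest: the statement $G_{\mathrm{ex}}(x)=\partial\sigma_C(u)$ for $u\in\ri N_C(x)$ is the load-bearing fact, and it rests on part (i) together with the observation that $\ri N_C(x)$ is precisely the set of subgradients $u$ of $\sigma_C$ whose face $\partial\sigma_C(u)$ is the smallest one containing $x$ (equivalently, the normal cone $N_C$ is constant on the relative interior of each normal-cone ``cell''). I would cite \cite[Theorem 8.9]{soltan2019lectures} and the exposed-face description in Remark~\ref{rem3-1} to close this gap rather than reprove it from scratch, so that the argument reduces to the short face-containment manipulations sketched above.
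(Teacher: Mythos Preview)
Your proof is correct and rests on the same exposed-face/normal-cone duality as the paper, but the execution differs in two places worth noting.

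For (i), the paper simply cites \cite[Theorem 18.1]{rockafellar1970convex} (the partition of $C$ into relative interiors of faces) to conclude that $K$, being the face with $x\in\ri K$, is the smallest face containing $x$ and hence is contained in the exposed face $G_{\mathrm{ex}}(x)$. Your direct argument---that a linear functional attaining its maximum over a convex set at a relative-interior point must be constant on that set---is a self-contained proof of exactly the piece of that theorem needed here, so your version is slightly more elementary.

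For (ii), the paper first establishes the identity $G_{\mathrm{ex}}(x)=\bigcap_{v\in N_C(x)}\partial\sigma_C(v)$ and then reads off both implications: the forward one is immediate by monotonicity of the intersection in the index set, and the converse is the same as yours. Your route instead uses the characterization $G_{\mathrm{ex}}(x)=\partial\sigma_C(u)$ for $u\in\ri N_C(x)$. The two identities are equivalent, but the paper's intersection formula is closer to the definition of $G_{\mathrm{ex}}$ (minimality means intersect over all exposing directions) and dispatches the forward direction in one line, whereas your version requires the additional fact that a relative-interior normal direction already exposes the \emph{minimal} face---precisely the ``load-bearing fact'' you flag and defer to citations. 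If you want the argument to be self-contained, the intersection formula is the shorter path.
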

\begin{proof}
   We first prove (i). Let $F=G_{\mathrm{ex}}(x)$. By definition, we would have $F\subseteq K$. Notice that $K$ is also a face of $C$ by \cite[Theorem 8.2]{soltan2019lectures}, by \cite[Theorem 18.1]{rockafellar1970convex}, we see that $K\subseteq F$. Therefore, we have proved (i). To prove (ii), we first argue $G_{\rm ex}(x)=\cap_{v\in N_{C}(x)}(\partial\sigma_C(v))$. According to \cref{rem3-1}, there exists some $w\in \R^n$ such that $G_{\rm ex}(x)=\partial \sigma_C(w)$. Using \cite[Example 11.4(a)]{rockafellar2009variational}, we see that $w\in N_{C}(x)$, which means $ G_{\rm ex}(x)\supseteq\cap_{v\in N_{C}(x)}(\partial\sigma_C(v))$. Next, notice that $\cap_{v\in N_{C}(x)}(\partial\sigma_C(v))$ is an exposed face of $C$ containing $x$ by using \cref{rem3-1} and \cite[Theorem 8.9]{soltan2019lectures}, we know that $G_{\rm ex}(x)\subseteq\cap_{v\in N_{C}(x)}(\partial\sigma_C(v))$ by definition. Consequently, $N_C(x)\subseteq N_C(y)$ directly implies $G_{\rm ex}(y)\subseteq G_{\rm ex}(x)$. Conversely, the condition $G_{\rm ex}(y)\subseteq G_{\rm ex}(x)$ implies $y\in \partial \sigma_C(w)$ for all $w\in N_C(x)$, and applying \cite[Example 11.4(a)]{rockafellar2009variational} again, we see that $w\in N_C(y)$ for all $w\in N_C(y)$, which further implies that $N_C(x)\subseteq N_C(y)$. 
\end{proof}
The following lemma identifies a special structure of the subdifferential of a polyhedral function.
\begin{lemma}
  \label{lem3-3}
  Let $h:\R^n\to\R\cup\{\infty\}$ be a proper polyhedral function with $\dom h\subseteq \R^n_+$, and $\bar x\in \dom h$. Let $C=\partial h(x)$, and $I=\{i\in [n]:~\bar x_i\neq 0\}.$ Then, we have 
  \begin{equation}
    \label{sic}
      S_{I}\subseteq \parr(C), 
  \end{equation}
  where $\parr(C):=\aff(C-v)$ for some $v\in C$, and $S_J:=\{z\in \R^n:~z_J=0\}$.
\end{lemma}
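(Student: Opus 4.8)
The plan is to read the claim almost directly off \cref{prop2-2}, once one observes that the parallel subspace $\parr(C)$ of a nonempty convex set $C$ necessarily contains $C-C$, hence any translate-to-zero of a cone that sits inside $C$. First I would record the preliminaries: since $h$ is polyhedral and $\bar x\in\dom h$, the set $C:=\partial h(\bar x)$ is a nonempty polyhedral (hence closed convex) set, so $\parr(C)=\aff(C-v)$ is a well-defined linear subspace of $\R^n$ for every $v\in C$, and one has the elementary inclusion $C-C\subseteq\parr(C)$ (for $a,b\in C$, write $a-b=(a-v)-(b-v)$ with both terms in the subspace $\aff(C-v)$).

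Next I would apply \cref{prop2-2} with $g=h$: for every $v\in C=\partial h(\bar x)$ and every $w\in\R^n$ with $w_I=0$ and $w_{I^c}\leq 0$, it holds that $v+w\in C$. Fixing such a $v$, this says $w=(v+w)-v\in C-C\subseteq\parr(C)$ for all such $w$; equivalently, the cone
\[ K:=\{w\in\R^n:~w_I=0,~w_{I^c}\leq 0\}=N_{\R^n_+}(\bar x) \]
is contained in $\parr(C)$, where the last equality holds because, by definition of $I$, we have $\bar x_i=0$ exactly for $i\in I^c$ (so that $\bar x\in\dom h\subseteq\R^n_+$ makes the orthant normal cone computable).

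Finally, since $\parr(C)$ is a \emph{linear subspace} containing $K$, it contains the linear span of $K$. Because the nonpositive orthant in the $I^c$-coordinates spans all of $\R^{|I^c|}$, we get $\spa K=\{w\in\R^n:~w_I=0\}=S_I$, and therefore $S_I\subseteq\parr(C)$, which is \eqref{sic}.

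There is no serious obstacle here; the only point that needs a moment's care is the last step, where one must pass from the \emph{cone} $N_{\R^n_+}(\bar x)$ — which is what \cref{prop2-2} delivers inside $C$ — to the full subspace $S_I$, exploiting that $\parr(C)$ is closed under arbitrary linear combinations. If one prefers not to route through \cref{prop2-2}, the same inclusion $N_{\R^n_+}(\bar x)\subseteq C^\infty$ is available from \cite[Proposition 8.12]{rockafellar2009variational}, which gives $[\partial h(\bar x)]^\infty=\partial^\infty h(\bar x)=N_{\dom h}(\bar x)\supseteq N_{\R^n_+}(\bar x)$ since $\dom h\subseteq\R^n_+$, combined with the fact that the recession cone of a nonempty closed convex set lies in its parallel subspace; the remaining spanning argument is identical.
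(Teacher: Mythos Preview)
Your proof is correct and follows essentially the same route as the paper's. The paper invokes \cite[Proposition 8.12]{rockafellar2009variational} directly to obtain $S:=\{z:z_I=0,\,z_{I^c}\le 0\}\subseteq N_{\R^n_+}(\bar x)\subseteq N_{\dom h}(\bar x)=C^\infty$, and then passes to $S_I=\aff(S)$ via the identity $\aff(C+S)=\aff(C)$ (using $C+C^\infty=C$ from \cite[Theorem 3.6]{rockafellar2009variational}); you instead route the first inclusion through \cref{prop2-2} (which is itself proved via the same recession-cone fact) and replace the affine-hull manipulation by the cleaner observation $K\subseteq C-C\subseteq\parr(C)$ followed by $\spa K=S_I$, but the underlying mechanism is identical---and you explicitly note the direct recession-cone alternative as well.
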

\begin{proof}
  Using \cite[Theorem 8.12]{rockafellar2009variational} and notice that polyhedral function must be closed, we know that 
  \[   C^\infty=(\partial h(\bar x))^\infty=N_{\dom h}(\bar x)\overset{\rm (a)}{\supseteq} N_{\R_+^n}(\bar x)\overset{\rm (b)}{\supseteq} S,      \] 
  where in (a) we have used the assumption that $\dom(h)\subseteq \R^n_+$ and in (b) we have used the definition of $I$ and set $S:=\{z\in \R^n:~z_{I}=0,~z_{I^c}\leq 0\}$. Then, we know that 
  \begin{equation}
    \label{aff_inclu}
  \forall v\in C,~~   v+S_I=v+\aff(S)=\aff(v+S)\subseteq \aff(C+S)=\aff(C), 
  \end{equation}
  where in the last equality we have used \cite[Theorem 3.6]{rockafellar2009variational}. This means that $S_I\subseteq \aff(C)-v=\aff(C-v)$, which proves \cref{sic}.
\end{proof}
The following is a well-known result concerning polyhedral functions; for a proof, we refer the reader to \cite[Appendix B]{ouyang2024kurdyka}.
\begin{lemma}
  \label{lem3-0}
  Let $h:\R^n\to\R\cup\{\infty\}$ be a proper polyhedral function, and $s\in \dom h$. Let $C=\partial h(s)$. Then $C$ is a polyhedral set, and there exists a neighborhood $V$ such that:
  \begin{equation}
    \label{eq3-1}
    \forall x\in V,\quad h'(s;x-s)=\sigma_C(x-s),\quad \partial h(x)=\partial\sigma_C(x-s).
  \end{equation}
\end{lemma}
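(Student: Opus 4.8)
The plan is to exploit the well-known fact that a polyhedral function is, locally around a point of its domain, the support function of the (polyhedral) subdifferential at that point, up to an affine term. Concretely, since $h$ is polyhedral, by \cite[Proposition 10.21]{rockafellar2009variational} (or \cite[Theorem 23.10, Theorem 19.1]{rockafellar1970convex}) the set $C=\partial h(s)$ is a nonempty polyhedral set and the directional derivative $h'(s;\cdot)$ exists in every direction and equals $\sigma_C(\cdot)$. The first step is therefore to record that $h$ can be written near $s$ as $h(x)=h(s)+\sigma_C(x-s)$ for $x$ in a suitable neighborhood $V$ of $s$; this is exactly the local support representation referenced in the introduction, see \cite[equation (4.2.7)]{facchinei2007finite}, \cite[Appendix B]{ouyang2024kurdyka}, \cite[Example 5.3.17]{milzarek2016numerical}. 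To prove it directly, I would use that a polyhedral convex function has a finite representation $h(x)=\max_{j\in[m]}\{\langle a_j,x\rangle - b_j\}$ on its (polyhedral) domain; the indices $j$ attaining the max at $s$ are precisely those that remain active on a whole neighborhood $V$ of $s$ within $\dom h$, and $C=\conv\{a_j : j \text{ active at } s\}$ translated appropriately, which yields $h(x)-h(s)=\max_{j \text{ active}}\langle a_j, x-s\rangle=\sigma_C(x-s)$ on $V$. (If $s\in\ri(\dom h)$ the neighborhood can be taken open in $\R^n$; in general one intersects with $\dom h$, but since $h=\infty$ off $\dom h$ and $\sigma_C(x-s)=\infty$ there as well — because $C^\infty=N_{\dom h}(s)$ — the identity persists.)

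Given the local identity $h=h(s)+\sigma_C(\cdot-s)$ on $V$, the two claimed formulas follow immediately. For the directional derivative, $h'(s;x-s)=\sigma_C'(0;x-s)=\sigma_C(x-s)$ since $\sigma_C$ is positively homogeneous. For the subdifferential, shrink $V$ if necessary so that the same set of indices stays active; then for $x\in V$ we have $\partial h(x)=\partial[\,h(s)+\sigma_C(\cdot - s)\,](x)=\partial\sigma_C(x-s)$, using that translation and addition of a constant do not change the subdifferential. Alternatively one can invoke \cite[Example 11.4(a)]{rockafellar2009variational} together with the active-index description to identify $\partial\sigma_C(x-s)$ with the face of $C$ exposed by $x-s$, matching $\partial h(x)$.

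The only genuine subtlety — and the step I would be most careful about — is the uniformity of the neighborhood: one must choose a single $V$ on which both the active-index set is constant and the support representation is exact, and verify that off $\dom h$ nothing goes wrong. This is handled by the polyhedrality of $\dom h$ and of $C$, which guarantees that the collection of ``active'' affine pieces is finite and that the region where a given subset is active is itself polyhedral, hence contains a relative neighborhood of $s$ in $\dom h$; combined with $h\equiv\infty$ and $\sigma_C\equiv\infty$ outside $\dom h$ (the latter because $\dom\sigma_C$ is the polar of $C^\infty=N_{\dom h}(s)$, whose polar is exactly the direction set of $\dom h$ at $s$), the identity \eqref{eq3-1} extends to all of $V$. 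I would simply cite \cite[Appendix B]{ouyang2024kurdyka} for this standard fact, as the lemma statement already signals, and keep the present argument as a short self-contained alternative.
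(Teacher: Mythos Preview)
Your proposal is correct and aligns with the paper's treatment: the paper does not give an independent proof of this lemma but simply refers the reader to \cite[Appendix B]{ouyang2024kurdyka}, which is exactly the reference you propose to cite, and your self-contained sketch via the finite max representation and the identity $C^\infty=N_{\dom h}(s)$ is a faithful expansion of that argument. There is nothing to add.
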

  The next three lemmas are the keys of our proof for \cref{thmstrict}.
\begin{lemma}
  \label{lem3-1}
  Suppose $h:\R^n\to \R\cup\{\infty\}$ is a proper polyhedral function with $\dom(h)\subseteq \R^n_+$. Suppose that $\R^n\ni s\geq 0$ and $s=t^2$. Let $v\in \ri(\partial h(s))$, $T:\R^n\to\R^n_+$ with $T(y)=y^2$ and $I=\{i\in [n]:~s_i\neq 0\}$. Then there exists a neighborhood $U$ of $t$, a neighborhood $\cN$ of $v$, and constant $\sigma>0$, such that for all $y\in U\cap T^{-1}(\dom\partial h)$ with $x=y^2$ and all $\lambda\in \cN$ it holds that 
  \begin{equation}
    \label{eqlemma33}
    \dist(y\circ \lambda,y\circ \partial h(x)) \geq  \sigma\|y_{I^c}\|.          
  \end{equation}
\end{lemma}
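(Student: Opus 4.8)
The plan is to localize $\partial h$ near $s$, replacing it by a fixed polyhedral set, and then run a two–case estimate according to how far a subgradient sits from $v$ in the coordinates indexed by $I$. By \cref{lem3-0} there is a neighbourhood $V$ of $s$ with $\partial h(x)=\partial\sigma_C(x-s)$ for $x\in V$, where $C:=\partial h(s)$; in particular $\partial h(x)=\{p\in C:\langle p,x-s\rangle=\sigma_C(x-s)\}\subseteq C$ for $x\in V\cap\dom\partial h$. I would take $U$ small enough that $y\in U$ forces $y^2\in V$ and $\min_{i\in I}y_i^2\ge c_1:=\tfrac12\min_{i\in I}t_i^2>0$ (possible since $t_i\ne0$ for $i\in I$), reserving one more smallness requirement for the end. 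Since $v\in\ri(C)$ and $S_I\subseteq\parr(C)$ by \cref{lem3-3}, there is a fixed $\epsilon>0$ with $v+z\in C$ whenever $z\in\parr(C)$ and $\|z\|\le\epsilon$. Finally $t_{I^c}=0$, so $\|y_{I^c}\|\le\|y-t\|$, and the case $y_{I^c}=0$ makes \cref{eqlemma33} trivial, so assume $y_{I^c}\ne0$.

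Fix $y\in U\cap T^{-1}(\dom\partial h)$, $x=y^2$, $\lambda$ near $v$, and an arbitrary $p\in\partial h(x)$; it suffices to show $\|y\circ(\lambda-p)\|\ge\sigma\|y_{I^c}\|$ and then take the infimum over $p$. Put $d:=x-s$ and $\hat d:=\pi_{\parr(C)}(d)$. Since $e_i\in S_I\subseteq\parr(C)$ for $i\in I^c$, we get $\hat d_{I^c}=d_{I^c}=y_{I^c}^2$, hence $\|\hat d\|\ge\|d_{I^c}\|\ge|I^c|^{-1/2}\|y_{I^c}\|^2$. Because $p$ maximizes $\langle\cdot,d\rangle$ over $C$, because $v+z\in C$ for $z\in\parr(C)$, $\|z\|\le\epsilon$, and because $p-v\in\parr(C)$, we obtain $\langle p-v,\hat d\rangle=\langle p-v,d\rangle\ge\epsilon\|\hat d\|$; separating the $I$- and $I^c$-parts and bounding $|\langle(p-v)_I,\hat d_I\rangle|\le\|(p-v)_I\|\,\|\hat d\|$ yields
\[
\langle(p-v)_{I^c},\,y_{I^c}^2\rangle\ \ge\ \bigl(\epsilon-\|(p-v)_I\|\bigr)\|\hat d\|.
\]

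Now dichotomize on $\|(p-v)_I\|$. If $\|(p-v)_I\|\le\epsilon/2$, the display gives $\langle(p-v)_{I^c},y_{I^c}^2\rangle\ge\tfrac{\epsilon}{2}\|\hat d\|\ge\tfrac{\epsilon}{2\sqrt{|I^c|}}\|y_{I^c}\|^2$; writing $\langle(p-v)_{I^c},y_{I^c}^2\rangle=\sum_{i\in I^c}[(p_i-v_i)y_i]y_i\le\|(y\circ(p-v))_{I^c}\|\,\|y_{I^c}\|$ and using $\|(y\circ(\lambda-v))_{I^c}\|\le\|\lambda-v\|_\infty\|y_{I^c}\|$, the triangle inequality gives $\|y\circ(\lambda-p)\|\ge\bigl(\tfrac{\epsilon}{2\sqrt{|I^c|}}-\|\lambda-v\|_\infty\bigr)\|y_{I^c}\|$. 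If instead $\|(p-v)_I\|>\epsilon/2$, then $\|(\lambda-p)_I\|>\epsilon/2-\|\lambda-v\|_\infty$, so $\|y\circ(\lambda-p)\|\ge\sqrt{c_1}\,\|(\lambda-p)_I\|$ is bounded below by a fixed positive constant, which dominates $\sigma\|y_{I^c}\|$ once the radius of $U$ is small enough (recall $\|y_{I^c}\|\le\|y-t\|$). Taking $\sigma:=\tfrac{\epsilon}{4\sqrt{|I^c|}}$, $\cN:=\{\lambda:\|\lambda-v\|_\infty\le\sigma\}$, and $U$ of small enough radius then yields \cref{eqlemma33}.

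I expect the second case to be the real obstacle: one must rule out a subgradient $p$ that agrees with $v$ almost exactly in its $I^c$-coordinates (so the $I^c$-block of $y\circ(\lambda-p)$ is negligible) while being pushed away from $v$ only along directions of tiny length, so that $\langle(p-v)_{I^c},y_{I^c}^2\rangle$ cannot be bounded from below in isolation. The device that resolves this is the projection $\hat d=\pi_{\parr(C)}(d)$: because $S_I\subseteq\parr(C)$ (\cref{lem3-3}), the unavoidable error term when isolating the $I^c$-part is $\|(p-v)_I\|\,\|\hat d\|$ with the \emph{same} factor $\|\hat d\|$ that lower-bounds the main term, so whenever the error is not subordinate one automatically gets the fixed gap $\|(p-v)_I\|>\epsilon/2$, and then the $I$-block alone — whose weights $y_i^2$ ($i\in I$) are bounded away from $0$ — already exceeds $\sigma\|y_{I^c}\|$ for $y$ near $t$. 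The remaining ingredients (uniformity of $\epsilon$ from $v\in\ri(C)$, and the neighbourhood bookkeeping) are routine.
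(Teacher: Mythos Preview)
Your argument is correct, modulo one cosmetic slip: in Case~2 the triangle inequality yields $\|(\lambda-p)_I\|>\epsilon/2-\|(\lambda-v)_I\|$, and $\|(\lambda-v)_I\|$ can exceed $\|\lambda-v\|_\infty$ when $|I|>1$, so the bound as written does not follow. Replacing $\|\cdot\|_\infty$ by the Euclidean norm (and shrinking $\cN$ and $\sigma$ accordingly) repairs this with no structural change.

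The scaffolding matches the paper's---localize via \cref{lem3-0}, project $d=x-s$ onto $\parr(C)$, invoke \cref{lem3-3} to identify $\hat d_{I^c}=d_{I^c}=y_{I^c}^2$, and exploit a ball of fixed radius inside $C\cap\aff(C)$ coming from $v\in\ri(C)$---but the final estimate is extracted differently. The paper avoids any case split: from $c\|d_1\|\le\langle w-\lambda,d_1\rangle$ it applies the \emph{weighted} Cauchy--Schwarz inequality
\[
\textstyle\sum_i(d_1)_i(w_i-\lambda_i)\le\bigl(\sum_i|(d_1)_i|(w_i-\lambda_i)^2\bigr)^{1/2}\bigl(\sum_i|(d_1)_i|\bigr)^{1/2},
\]
and then bounds $|(d_1)_i|\le y_i^2$ uniformly in $i$ (for $i\in I$ via the neighbourhood condition $\|x-s\|\le\min_{j\in I}x_j$, for $i\in I^c$ by equality), obtaining $\|y\circ(\lambda-w)\|^2\ge(c^2/n)\|y_{I^c}\|^2$ in a single chain. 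Your dichotomy on $\|(p-v)_I\|$ is more elementary---only the unweighted Cauchy--Schwarz appears---and makes the mechanism transparent: either the $I^c$-block of $y\circ(p-v)$ already dominates, or the $I$-block does, and then the weights $y_i^2\ge c_1$ for $i\in I$ give a fixed positive lower bound. The cost is an extra smallness requirement on $U$ so that this fixed constant beats $\sigma\|y_{I^c}\|$; the paper's weighted inequality absorbs both regimes simultaneously and needs no such coupling between $U$ and $\sigma$.
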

\begin{proof}
  Set $C=\partial h(s).$ By the definition of relative interior, we may take a sufficiently small neighborhood $\cN$ of $v$ such that there exists a constant $c>0$ such that 
  \begin{equation}
    \label{subset_ri}
   \forall \lambda\in\cN,~   \bB_c(\Pi_{\aff(C)}(\lambda))\cap \aff(C) \subseteq C,
  \end{equation}
  where $\Pi_{\aff(C)}$ denotes the projection operator onto $\aff(C)$. Let $V$ be a neighborhood of $s$ such that \cref{eqlemma33} holds by using \cref{lem3-0}. We select a sufficiently small neighborhood $U$ of $y$ such that 
  \begin{subequations}
    \begin{align}
      &T(U)\subseteq V \label{eq33-1a}\\
      &\forall z\in V,~ \|z-s\|\leq \min_{i\in I}|z_i|. \label{eq33-1b} 
    \end{align}
  \end{subequations}
  Now, fix $y\in U\cap T^{-1}\dom(\partial h)$, $\lambda\in \cN$ and notice that $x=y^2\in \dom\partial h$. Assume that:
  \begingroup
\begin{equation}
    \label{lam_decom}
    \begin{aligned}
  \lambda&=\lambda_1+\lambda_2,\\
 \lambda_1&=v+\Pi_{\aff(C-v)}(\lambda-v)=\Pi_{\aff(C)}(\lambda)\overset{\rm (a)}{\in }C,\\ \lambda_2&=\Pi_{\aff(C-v)^\perp}(\lambda-v), 
    \end{aligned}
  \end{equation}
\endgroup
  where (a) follows from \cref{subset_ri}. Since $\partial h(x)$ is closed, we can select $w\in \partial h(x)$ such that 
  \begin{equation}
    \label{eq3-4}
    \dist(y\circ \lambda,y\circ \partial h(x))=\|y\circ (\lambda- w)\|.
  \end{equation}
  By \cite[Example 11.4(a)]{rockafellar2009variational}, we see that 
  \[  w\in \partial h(x)\overset{\rm (a)}{=}\partial \sigma_C(x-s) ~~\iff ~~ w\in C,
  ~ \langle w, x-s\rangle=\sigma_{C}(x-s),           \]
  where in (a) we have used that \cref{eq33-1a} and \eqref{eq3-1}. Set $d=x-s$, and suppose 
  \begin{equation}
    \label{d_decom}
    d=d_1+d_2,\quad d_1\in \cL,~d_2\in \cL^\perp,~\cL:=\aff(C-v).
  \end{equation}
  The definition of support function further implies that:
  \begin{align*}
      \sigma_C(d)&=\sup_{z\in C}\langle z,d\rangle =\langle w,d \rangle\\
      &\overset{\rm (a)}{\geq} \sup_{z\in \bB_{c}(\lambda_1)\cap \aff(C)}\langle z, d\rangle\overset{\rm (b)}{=}\langle \lambda_1,d\rangle+\sup_{z\in\bB_c(0)\cap \cL}\langle z,d\rangle\\
      &\overset{\rm (c)}{=}\langle \lambda_1,x \rangle+c\|d_1\|,   
  \end{align*}
  where in (a) we have used \cref{subset_ri}, (b) follows from \cref{lam_decom}, and in (c) we have used \cref{d_decom}. Therefore, by rearranging this inequality, we see that 
  \begin{equation}
    \label{eq3-2}
    \begin{aligned}
      c\|d_1\|&\leq \langle w-\lambda_1,d\rangle\overset{\rm (a)}{=}\langle w-\lambda_1,d_1\rangle\overset{\rm (b)}{=}\langle w-\lambda,d_1\rangle  =\sum_{i=1}^n (d_1)_i(w_i-\lambda_i)\\
      &\overset{\rm (c)}{\leq} \sqrt{\sum_{i=1}^n|(d_1)_i|(w_i-\lambda_i)^2 } \sqrt{\sum_{i=1}^n|(d_1)_i|}\\
      &\overset{\rm (d)}{\leq} \sqrt{\sum_{i=1}^n|(d_1)_i|(w_i-\lambda_i)^2 } n^{\frac14}\|d_1\|^{\frac12},
    \end{aligned}
  \end{equation}
  where in (a) we have used \cref{d_decom} and $w-\lambda_1\in \aff(C-v)=\cL$, in (b) we have used \cref{lam_decom} to show that $\langle \lambda_2,d_1 \rangle=0$, and in both (c) and (d) we have used Cauchy-Schwarz inequality. Moreover, we notice that
  \begin{equation}
    \label{eq3-3}
    \forall i\in I^c,\quad (d_1)_i\overset{\rm (a)}{=}d_i-(d_2)_i\overset{\rm (b)}{=}d_i\overset{\rm (c)}{=}x_i,
  \end{equation} 
  where (a) follows from \cref{d_decom}, (b) follows from $d_2\in \cL^\perp=\aff(C-v)^\perp\subseteq \{v\in \R^n:~v_{I^c}=0\}$ by using \cref{lem3-3}, and (c) follows from that $d=x-s$ and $s_i=0$ for all $i\in I^c$ by the definition of $I$. By rearranging \cref{eq3-2}, we get that
  \begin{equation}
    \label{eq3-6}
    \begin{aligned}
       \frac{c^2}{n^\frac12}\|d_1\|&\leq \sum_{i=1}^n|(d_1)_i|(w_i-\lambda_i)^2=\sum_{i\in I}|(d_1)_i|(w_i-\lambda_i)^2+\sum_{i\notin I}|(d_1)_i|(w_i-\lambda_i)^2 \\
      &\overset{\rm (a)}{=}\sum_{i\in I}|(d_1)_i|(w_i-\lambda_i)^2+\sum_{i\notin I}x_i(w_i-\lambda_i)^2 \\
      &\overset{\rm (b)}{\leq}\sum_{i\in I}\|d\|(w_i-\lambda_i)^2+\sum_{i\notin I}y_i^2(w_i-\lambda_i)^2 \\
      &\overset{\rm (c)}{\leq}\sum_{i\in I}y_i^2(w_i-\lambda_i)^2+\sum_{i\notin I}y_i^2(w_i-\lambda_i)^2=\|y\circ (\lambda-w)\|^2\\
      &\overset{\rm (d)}{=}\dist^2(y\circ \lambda,y\circ \partial h(x))
    \end{aligned}
  \end{equation}
  where in (a) we have used \cref{eq3-3}, in (b) we have used $|(d_1)_i|\leq \|d_1\|\leq \|d\|$ for all $i\in [n]$ by \cref{d_decom} and $x=y^2$, (c) follows from \cref{eq33-1b}, and (d) follows from \cref{eq3-4}. Moreover, we have 
  \begin{equation}
    \label{eq3-5}
    \|y_{I^c}\|^2=\sum_{i\in I^c}y_i^2\overset{\rm (a)}{=}\sum_{i\in I^c}x_i\overset{\rm (b)}{\leq} \sqrt{|I^c|}\|x_{I^c}\|\overset{\rm (c)}{\leq} \sqrt{n}\|(d_1)_{I^c}\|\leq \sqrt{n}\|d_1\|, 
  \end{equation}    
  where (a) follows from $x=y^2$, in (b) we have used Cauchy-Schwarz inequality, and (c) follows from \cref{eq3-3}. Then \cref{eqlemma33} follows from \eqref{eq3-6} and \cref{eq3-5} by setting $\sigma={c}/{n^\frac12}$. 
\end{proof}
\begin{lemma}
  \label{lem3-2}
  Let $h:\R^n\to\R\cup\{\infty\}$ be a proper polyhedral function and assume $\bar x\in \dom\partial h$. Let $I:=\{i\in [n]:~\bar x_i\neq 0\}$. There exists a neighborhood $U$ of $\bar x$ and a constant $\sigma_1>0$ such that for all $x\in U\cap \dom\partial h$:
  \[     \exists \tilde x\in \dom\partial h,~~ \|x-\tilde x\|\leq \sigma_1 \|x_{I^c}\|,~~\tilde x_{I^c}=0,~~\partial h(x)\subseteq \partial h(\tilde x).          \]
\end{lemma}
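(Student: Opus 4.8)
The plan is to transport the computation to the exposed-face structure of the polyhedral set $C:=\partial h(\bar x)$ via \cref{lem3-0}, and then to produce $\tilde x$ as a projection whose error is controlled by the classical Hoffman error bound. Let $V$ be the neighborhood of $\bar x$ given by \cref{lem3-0}, so that $\partial h(x)=\partial\sigma_C(x-\bar x)$ for every $x\in V$; recall from \cref{rem3-1} and \cite[Example 11.4(a)]{rockafellar2009variational} that $\partial\sigma_C(z)$ is the exposed face $E(z):=\argmax_{c\in C}\langle c,z\rangle$ of $C$ (with $E(0)=C$), and that the polyhedral set $C$ has only finitely many faces. Since $\bar x_{I^c}=0$ by definition of $I$, the point $\bar x$ always has vanishing $I^c$-block, so the whole difficulty is to find such a point whose distance to $x$ is controlled by $\|x_{I^c}\|$ rather than by $\|x-\bar x\|$.

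First I would fix $x\in V\cap\dom\partial h$, set $z:=x-\bar x$, and choose $c=c(x)\in\ri(\partial h(x))=\ri(E(z))$, which is legitimate because $\partial h(x)\neq\emptyset$. Then $z\in N_C(c)$ (as $c\in E(z)$), and $N_C(c)\subseteq\dom\sigma_C$ because $z'\in N_C(c)$ forces $\sigma_C(z')=\langle c,z'\rangle<\infty$. The crucial elementary point is that, since $c$ lies in the \emph{relative interior} of the face $E(z)$, any face of $C$ that contains $c$ must contain all of $E(z)$ (\cite[Theorem 18.1]{rockafellar1970convex}); hence for every $y\in V$ with $y-\bar x\in N_C(c)$ we have $c\in E(y-\bar x)$, so $\partial h(y)=E(y-\bar x)\supseteq E(z)=\partial h(x)$ and, in particular, $\partial h(y)\neq\emptyset$, i.e.\ $y\in\dom\partial h$. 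Finally, the normal cone $N_C(\cdot)$ is constant on the relative interior of each face of $C$, so $N_C(c(x))$ belongs to a fixed finite list $K_1,\dots,K_m$ of polyhedral cones as $x$ ranges over $V\cap\dom\partial h$.

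Next I would set $S_{I^c}:=\{z\in\R^n:z_{I^c}=0\}$ and define $\tilde x:=\bar x+\Pi_{N_C(c)\cap S_{I^c}}(z)$. For each $j$ the set $K_j\cap S_{I^c}$ is a nonempty polyhedron (it contains $0$), so the Hoffman error bound provides a constant $\sigma_1>0$, independent of $x$, with $\dist(z',K_j\cap S_{I^c})\le\sigma_1\,\dist(z',S_{I^c})=\sigma_1\|z'_{I^c}\|$ for all $z'\in K_j$ and all $j$. Taking $z'=z\in N_C(c)$ and using $\bar x_{I^c}=0$ gives $\|x-\tilde x\|=\dist(z,N_C(c)\cap S_{I^c})\le\sigma_1\|z_{I^c}\|=\sigma_1\|x_{I^c}\|$, and $\tilde x_{I^c}=0$ by construction. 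Shrinking $U$ to radius less than $r_V/(1+\sigma_1)$, where $\bB_{r_V}(\bar x)\subseteq V$, forces $\|\tilde x-\bar x\|\le\|z\|+\sigma_1\|z_{I^c}\|\le(1+\sigma_1)\|x-\bar x\|<r_V$, so $\tilde x\in V$; since $\tilde x-\bar x\in N_C(c)$, the previous paragraph yields $\tilde x\in\dom\partial h$ and $\partial h(x)\subseteq\partial h(\tilde x)$, which finishes the proof.

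The step I expect to be the main obstacle is obtaining the distance estimate with $\|x_{I^c}\|$ — not $\|x-\bar x\|$ — on the right-hand side: the obvious candidate $\tilde x=\bar x$ meets every other requirement but badly fails this bound, while simply zeroing the $I^c$-coordinates of $x$ need not remain in $\dom h$ (already for $g=\iota_{\Delta}$ with $\Delta$ the probability simplex). Routing the construction through the single polyhedral cone $\bar x+N_C(c)$, which simultaneously guarantees that $\partial h$ can only grow and that $\tilde x$ stays inside $\dom\partial h$, is exactly what lets Hoffman's lemma be applied with a uniform constant; the remaining ingredients — finiteness of the cones $K_j$, the face-containment fact, and $N_C(c)\subseteq\dom\sigma_C$ — are routine.
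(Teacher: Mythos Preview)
Your proposal is correct and follows essentially the same argument as the paper's proof: both localize via \cref{lem3-0}, pick a relative-interior point $\lambda\in\ri(\partial h(x))$, project $x-\bar x$ onto the polyhedral cone $N_C(\lambda)\cap S_{I^c}$, invoke the Hoffman-type intersection error bound (the paper cites \cite[Lemma 3.2.3]{facchinei2007finite}) over the finite family of normal cones, and recover the face inclusion $\partial h(x)\subseteq\partial h(\tilde x)$ from \cite[Theorem 18.1]{rockafellar1970convex}. The only cosmetic differences are how the neighborhood is shrunk to keep $\tilde x\in V$ and that you state the error bound directly for $z'\in K_j$ rather than via the $\max$-form.
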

\begin{proof}
  Let $C=\partial h(\bar x)$. Let $V$ be an open neighborhood of $\bar x$ such that \eqref{eq3-1} holds. Moreover, let $\cF$ be the set consisting of all the nonempty exposed faces of $C$, which is actually all the faces of $C$ since $C$ is a polyhedral set, see \cite[Corollary 9.14]{soltan2019lectures}. Using \cite[Theorem 9.15]{soltan2019lectures}, we know $\cF$ is a finite set. According to \cref{prop3-1}, we may define the set $\cN$ be the set consisting of all the $N_C(x)$ with $x\in \ri(F)$ for some $F\in \cF$, which is independent of the choice of $x$. Clearly, we know $|\cN|=|\cF|$, and hence $\cN$ is also a finite set. Moreover, notice that for all $x\in C$, $N_C(x)$ coincides with $N_C(y)$ for some $y\in \ri(G_{\rm ex}(x))$ by \cref{prop3-1}, where $G_{\rm ex}(x)$ is the minimal exposed face generated by $x$. This means that\begin{equation}
    \forall x\in C,\quad N_C(x)\in \cN.
  \end{equation}
  For all $J\subseteq [n]$, we set $S_J:=\{v\in \R^n:~v_J=0\}$. Notice that $\emptyset\neq S_J\cap N \ni 0$ for all $J\subseteq [n]$ and $N\in \cN$. Using \cite[Lemma 3.2.3]{facchinei2007finite}, we may assume that there exists a constant $c$ such that
  \begin{equation}
    \label{eb_normal}
    \forall N\in \cN,~x\in \R^n,~J\subseteq [n],~~\quad \dist(x, S_J\cap N)\leq c\max\{\dist(x,N),\dist(x,S_J)\}. 
  \end{equation}
  Let $U$ be a neighborhood of $\bar x$, which is chosen to be sufficiently small such that
  \begin{equation}
    \label{choose_u}
   U\subseteq V; \quad \forall N\in \cN,~J\subseteq [n],\quad \bar x+\Pi_{N\cap S_J}(U-\bar x)\subseteq V.
  \end{equation}
  Now fix $x\in U\cap \dom\partial h$, and let $F:=\partial \sigma_C(x-\bar x)$. Select $\lambda\in \ri(F)$ and $N:=N_C(\lambda)$. Then by definition we know $N\in \cN$. Define $d:=\Pi_{N\cap S_{I^c}}(x-\bar x)$ and $\tilde x=\bar x+d$. Using \cref{choose_u}, we know $\tilde x\in V$. By \cref{eq3-1}, we see that 
  \[ \partial h(\tilde x)=\partial \sigma_C(\tilde x-\bar x)=\partial \sigma_C(d).                       \]
  Notice that $d\in N=N_C(\lambda)$ by definition, this means $\lambda\in \partial\sigma_C(d)$ by using \cite[Example 11.4(a)]{rockafellar2009variational}. Since $\partial \sigma_C(d)$ is an exposed face of $C$ by \cref{rem3-1}, and $G_{\rm ex}(\lambda)=F$ by \cref{prop3-1}, we know that $F\subseteq \partial \sigma_C(d)$. Consequently, we have 
  \[   \partial \sigma_C(x-\bar x)\overset{\rm (a)}{=}\partial h(x)= F\subseteq \partial\sigma_C(d)\overset{\rm (b)}{=}\partial h(\tilde x),    \]
  where in (a) and (b) we have used \eqref{choose_u} to show $x,\tilde x\in V$, and hence \cref{eq3-1} is applicable. Moreover, using \cref{eb_normal}, we see that 
  \[   \|x-\tilde x\|=\|x-\bar x-d\|\overset{\rm (a)}{=}\dist(x-\bar x,S_{I^c}\cap N)\overset{\rm (b)}{\leq} c\dist(x-\bar x,S_{I^c})\overset{\rm (c)}{=}c\|x_{I^c}\|,           \]
  where in (a) we have used the definition of $d:=\Pi_{N\cap S_{I^c}}(x-\bar x)$, in (b) we have used \cref{eb_normal} and the fact $x-\bar x\in N:=N_C(\lambda)$ for $\lambda\in \ri(\partial\sigma_C(x-\bar x))$ by using \cite[Example 11.4(a)]{rockafellar2009variational}, and in (c) we have used the definition of $S_{I^c}=\{v\in \R^n:~v_{I^c}=0\}$ and the fact that $\bar x_{I^c}=0$ by the definition of $I$. This finishes the proof.
\end{proof}
\begin{lemma}
  \label{lem3-5}
  Let $h:\R^n\to\R\cup\{\infty\}$ be a proper polyhedral function with $\dom h\subseteq \R^n_+$, and assume $v\in \ri(\partial h(\bar x))$. Let $I:=\{i\in [n]:~x_i\neq 0\}$, and $\Pi_{I}:\R^n\to \R^{|I|}$ defined as $\Pi_I(x)=x_I$. There exists a neighborhood $U$ of $\bar x$, a neighborhood $N$ of $v$ and a constant $\sigma_2>0$ such that for all $x\in U\cap \dom\partial h$ with $x_{I^c}=0$ and $w\in N$ it holds that:
  \[    \dist(w,\partial h(x))\leq  \sigma_2 \dist(\Pi_{I}w,\Pi_I(\partial h(x))).         \]
\end{lemma}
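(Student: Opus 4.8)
The plan is to reduce everything to a finite linear inequality description of $C:=\partial h(\bar x)$ and to a Hoffman error bound. By \cref{lem3-0} there is a neighborhood $U$ of $\bar x$ on which $\partial h(x)=\partial\sigma_C(x-\bar x)$, and by \cite[Example 11.4(a)]{rockafellar2009variational} the set $\partial\sigma_C(d)$ equals the exposed face $F_d:=\{z\in C:\langle z,d\rangle=\sigma_C(d)\}$. Fix a representation $C=\{z:\langle a_j,z\rangle\le b_j,\ j\in\mathcal J\}$ with $\mathcal J$ finite, let $J_=$ collect the indices active throughout $C$, and --- using $v\in\ri(C)$, hence $\langle a_j,v\rangle<b_j$ for $j\notin J_=$ --- pick a neighborhood $N$ of $v$ with $\langle a_j,w\rangle<b_j$ for all $w\in N$ and $j\notin J_=$. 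Finally, since $\bar x_i=0$ for $i\in I^c$ we have $-e_i\in N_{\R^n_+}(\bar x)\subseteq N_{\dom h}(\bar x)=C^\infty$ (the last equality by \cite[Proposition 8.12]{rockafellar2009variational}), and comparing with $C^\infty=\{z:\langle a_j,z\rangle\le 0\ \forall j\}$ we obtain $(a_j)_i\ge 0$ for every $j\in\mathcal J$ and $i\in I^c$.

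The heart of the argument is a structural claim about the faces that arise. Fix $x\in U\cap\dom\partial h$ with $x_{I^c}=0$; then $d:=x-\bar x$ is supported on $I$ and $D:=\partial h(x)=F_d$ is a nonempty polyhedron. First I would show that $\{s:s_I=0,\ s_{I^c}\le 0\}\subseteq D^\infty$: for $z\in D$ and such an $s$ one has $s\in C^\infty$ (since $(a_j)_{I^c}\ge 0$), so $z+ts\in C$, and $\langle z+ts,d\rangle=\langle z,d\rangle$ because $s_I=0$ and $d_{I^c}=0$, so $z+ts\in F_d=D$. Receding along $-e_i$ ($i\in I^c$) inside $D$ then forces $(a_j)_i=0$ for every constraint $j$ that is active on all of $D$. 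Splitting the remaining constraints according to whether $(a_j)_{I^c}$ vanishes yields the description
\[
D=\bigl\{(z_I,z_{I^c}):z_I\in\Pi_I(D),\ \langle (a_j)_{I^c},z_{I^c}\rangle\le b_j-\langle (a_j)_I,z_I\rangle\ \text{for }j\in J''\bigr\},
\]
where $J''$ is the set of indices not active on all of $D$ with $(a_j)_{I^c}\ne 0$ (combined with $(a_j)_{I^c}\ge 0$, such an $(a_j)_{I^c}$ has a positive entry, so one may drive $z_{I^c}\to-\infty$ and see that the $J''$-constraints impose no restriction on $\Pi_I(D)$). Equivalently $\parr(D)\supseteq S_I$, which also follows from \cref{lem3-3} applied at $x$.

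With this structure the estimate is a short computation. Given $w\in N$, choose $\hat z_I\in\Pi_I(D)$ with $\|w_I-\hat z_I\|=\dist(\Pi_I w,\Pi_I(\partial h(x)))=:\delta$; this exists since $\Pi_I(D)$ is a closed polyhedron by \cite[Theorem 19.3]{rockafellar1970convex}. For each $j\in J''$ we have $j\notin J_=$, so $\langle a_j,w\rangle<b_j$, and therefore
\[
\langle (a_j)_{I^c},w_{I^c}\rangle-\bigl(b_j-\langle (a_j)_I,\hat z_I\rangle\bigr)=\bigl(\langle a_j,w\rangle-b_j\bigr)-\langle (a_j)_I,w_I-\hat z_I\rangle<\|(a_j)_I\|\,\delta.
\]
Hence the violation of $w_{I^c}$ against the $j$-th inequality of the nonempty slice $D_{\hat z_I}:=\{u:(\hat z_I,u)\in D\}$ is at most $\|(a_j)_I\|\,\delta$, and Hoffman's error bound (\cite[Lemma 3.2.3]{facchinei2007finite}, with a constant $\theta$ chosen uniformly over the finitely many subsystems that can arise as $J''$) produces $\hat z_{I^c}\in D_{\hat z_I}$ with $\|w_{I^c}-\hat z_{I^c}\|\le\theta\bigl(\max_j\|(a_j)_I\|\bigr)\delta$. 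Then $(\hat z_I,\hat z_{I^c})\in D$ and $\dist(w,\partial h(x))\le\|w-(\hat z_I,\hat z_{I^c})\|\le\bigl(1+\theta\max_j\|(a_j)_I\|\bigr)\delta$, which is the assertion with $\sigma_2:=1+\theta\max_{j\in\mathcal J}\|(a_j)_I\|$ and $U$, $N$ as above.

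The step I expect to be the main obstacle is the structural claim in the second paragraph: that every face $D=\partial h(x)$ occurring in the slice $x_{I^c}=0$ has its active constraints supported on $I$ and splits off its $I^c$-directions in a purely one-sided fashion. Once this is in place, the choice of $U$ and $N$, the sign property $(a_j)_{I^c}\ge 0$, and the concluding Hoffman estimate are routine; the only additional care needed is to keep the Hoffman constant, and hence $\sigma_2$, independent of $x$.
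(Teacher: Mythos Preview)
Your proof is correct but follows a genuinely different route from the paper. The paper argues purely through the face lattice: with $C=\partial h(\bar x)$ and $\mathcal F_1$ the finite set of proper exposed faces of $C$ of the form $\partial\sigma_C(d)$ with $d_{I^c}=0$, it first shows that $\Pi_I v\in\Pi_I F$ for $F\in\mathcal F_1$ would force $v\in F$ (using $d_{I^c}=0$ and the support characterization), contradicting $v\in\ri C$; hence $\dist(\Pi_I v,\Pi_I F)\ge\delta_1>0$ uniformly over $\mathcal F_1$, while $\dist(v,F)\le M_1$ is trivially bounded. A continuity argument passes this fixed ratio $\delta_1/M_1$ to all $w$ in a small $N$, and the remaining case $\partial h(x)=C$ is handled separately by projecting onto $\aff(C)$ and invoking $S_I\subseteq\parr(C)$ from \cref{lem3-3}. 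Your argument instead fixes a halfspace description of $C$, extracts the sign structure $(a_j)_{I^c}\ge 0$ from the recession cone, shows that every face $D$ arising from $d_{I^c}=0$ has its implicit equalities supported on $I$, and then lifts a projection in $\Pi_I(D)$ to a full point of $D$ by a Hoffman bound on the $I^c$-slice. This is more constructive and handles the two cases ($\partial h(x)$ a proper face versus $\partial h(x)=C$) uniformly, at the price of importing Hoffman's lemma and a slightly heavier structural analysis; the paper's version is softer and needs no error bound machinery but splits into cases. Two small remarks: your ``equivalently $\parr(D)\supseteq S_I$'' is an implication rather than an equivalence (the cone inclusion $\{s:s_I=0,\,s_{I^c}\le 0\}\subseteq D^\infty$ is strictly stronger), and it is worth stating explicitly that $J_=\subseteq J_D$, hence $J''\subseteq\mathcal J\setminus J_=$, which is what makes $\langle a_j,w\rangle<b_j$ available for $j\in J''$.
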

\begin{proof}
   Set $C=\partial h(\bar x)$, and suppose that $V$ is a neighborhood of $\bar x$ such that \eqref{eq3-1} holds. Let $\cF$ be the set consisting all the nonempty proper exposed face of $C$. Notice that $\cF$ is a finite set by \cite[Theorem 9.15]{soltan2019lectures}. Therefore, we may assume that 
   \begin{equation}
    \label{dist_upper_exposed}
    \forall F\in \cF,\quad  \dist(v,F)\leq M_1.
   \end{equation}
   Next, we argue that either $\dist(\Pi_Iv,\Pi_I\partial h(x))\neq 0$ or $\partial h(x)=C$ for all $x\in V\cap \dom\partial h$ with $x_{I^c}=0$. Suppose $ \dist(\Pi_Iv,\Pi_I\partial h(x))=0$ for some $x\in V\cap \dom\partial h$. This means there exists $u\in \partial h(x)$ such that $u_i=v_i$ for all $i\in I$. Notice that $x_i=\bar x_i=0$ for all $i\in I^c$. By \cref{eq3-1}, we see that 
   \[  \partial h(x)=\sigma_C(x-\bar x)=\langle u,x-\bar x\rangle=\langle  v,x-\bar x\rangle.  \]
   Using \cite[Example 11.4(a)]{rockafellar2009variational}, we see that $v\in \partial\sigma_C(x-\bar x)=\partial h(x)$. Since $\partial h(x)$ is an exposed face of $C$, by \cite[Theorem 18.1]{rockafellar1970convex}, we know $\partial h(x)=C$, which finishes the proof. Let $\cF_1\subset\cF$ be the set consisting of all the nonempty proper exposed faces of $C$ that can be written as $\partial \sigma_C(d)$ with $d_{I^c}=0$. The previous argument means that we may assume there exists a constant $\delta_1>0$ such that
   \begin{equation}
    \label{dist_lower_exposed}
    \forall F\in \cF_1,\quad \dist(\Pi_{I}v,\Pi_I F)\geq \delta_1.
   \end{equation} 
   Combining \eqref{dist_lower_exposed} with \eqref{dist_upper_exposed}, we may assume that:
   \[   \forall F\in \cF_1,\quad \dist(\Pi_I v,\Pi_I F)/\dist(v,F)\geq \delta_1/M_1.    \]
   By choosing the neighborhood $N$ of $v$ to be sufficiently small, and notice that any distance function is a continuous function, we may assume that
   \begin{equation}
    \label{bound_exp_nri}
    \forall F\in \cF_1,~w\in N,\quad \dist(\Pi_I w,\Pi_I F)/\dist(w,F)\geq \delta_1/(2M_1).  
   \end{equation} 
   Moreover, since $v\in \ri(C)$, we may reduce $N$ if necessary such that:
   \begin{equation}
    \label{proj_cond_ri}
    \forall w\in N,\quad   \Pi_{C}(w)=\Pi_{\aff(C)}(w).
   \end{equation}
   Now fix $x\in V\cap \dom\partial h$ with $x_{I^c}=0$ and $w\in N$. If $\partial h(x)\in \cF_1$, then by \cref{bound_exp_nri} we would have:
   \begin{equation}
    \label{lem3-10cond1}
   \forall w\in N,\quad \dist(\Pi_I w,\Pi_I\partial h(x))\geq \delta_1/(2M_1)\dist(w,\partial h(x)). 
   \end{equation}  
   Otherwise, we would have $\partial h(x)=C$. Let $p=\Pi_C(w)$. By \cref{proj_cond_ri}, we know $p=\Pi_{\aff(C)}(w)$. Then by the projection condition, we see that 
   \[     w-p\perp \aff(C-v)\overset{\rm (a)}{\supseteq}\{q\in \R^n:~q_{I}=0\} ~~\implies ~~ w_i=p_i,~\forall i\in I^c,     \]
   where in (a) we have used \cref{lem3-3}. Consequently, by $p=\Pi_C(w)$ we have:
   \begin{align*}
        \dist(w,\partial h(x))&=\dist(w,C)=\|w-p\|=\|w_I-p_I\|\\
        &\geq \dist(\Pi_I w,\Pi_I C)= \dist(\Pi_I w,\Pi_I \partial h(x)).    
   \end{align*}
   This together with \cref{lem3-10cond1} finish the proof.
\end{proof}

The next lemma follows from the representation of polyhedral function in \cite[Proposition 3.1]{mordukhovich2016generalized}, whose proof is hence omitted.
\begin{lemma}
  \label{lem3-6}
  Let $h:\R^n\to \R\cup\{\infty\}$ be a proper polyhedral function. Then $h$ is Lipschitz continuous on $\dom h$ in the sense that there exists a constant $L>0$ such that
  \[    \forall x,y\in \dom h,~~\|h(x)-h(y)\|\leq L\|x-y\|.                          \]
  In particular, $\phi$ in \cref{prob1} is locally Lipschitz on $\dom\phi$.
\end{lemma}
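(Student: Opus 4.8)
The plan is to reduce the statement to the elementary fact that a pointwise maximum of finitely many affine functions is globally Lipschitz, using the structural representation of proper polyhedral convex functions. By \cite[Proposition 3.1]{mordukhovich2016generalized} (equivalently \cite[Theorem 19.1]{rockafellar1970convex}), there exist a positive integer $m$, vectors $a_1,\dots,a_m\in\R^n$, scalars $b_1,\dots,b_m\in\R$, and a polyhedral set $P\subseteq\R^n$ such that
\[
 h(x)=\max_{i\in[m]}\big(\langle a_i,x\rangle+b_i\big)+\iota_P(x)\qquad\forall\,x\in\R^n,
\]
and in particular $\dom h=P$. The two routine inequalities behind the argument are: for reals $u_1,\dots,u_m$ and $v_1,\dots,v_m$ one has $|\max_i u_i-\max_i v_i|\le\max_i|u_i-v_i|$, and by Cauchy--Schwarz $|\langle a_i,x-y\rangle|\le\|a_i\|\,\|x-y\|$ for each $i$.

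Applying these with $u_i=\langle a_i,x\rangle+b_i$ and $v_i=\langle a_i,y\rangle+b_i$ for arbitrary $x,y\in\dom h=P$ yields
\[
 |h(x)-h(y)|=\Big|\max_{i\in[m]}(\langle a_i,x\rangle+b_i)-\max_{i\in[m]}(\langle a_i,y\rangle+b_i)\Big|\le\max_{i\in[m]}|\langle a_i,x-y\rangle|\le L\|x-y\|,
\]
where $L:=\max\{1,\max_{i\in[m]}\|a_i\|\}>0$ (the $\max\{1,\cdot\}$ only serves to ensure $L>0$ in the degenerate case where $h$ is constant on $P$). This establishes the global Lipschitz estimate on $\dom h$.

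For the ``in particular'' clause, note that $\dom\phi=\dom g$ (since $f\in C^2(\R^n)$ is finite everywhere; see also \cref{nprop2-2}), that $g$ is proper polyhedral and hence globally Lipschitz on $\dom g$ by the first part, and that $f\in C^2(\R^n)\subseteq C^1(\R^n)$ is locally Lipschitz on all of $\R^n$. Since $\phi=f+g$, on any bounded neighborhood of a point of $\dom\phi$ intersected with $\dom\phi=\dom g$ the function $\phi$ inherits a finite Lipschitz modulus from the sum of the local modulus of $f$ and the global modulus $L$ of $g$; hence $\phi$ is locally Lipschitz on $\dom\phi$. I do not expect a genuine obstacle: the whole content is the representation theorem for polyhedral functions, after which the estimate is immediate. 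The only points requiring a moment's care are verifying that the polyhedral set in the representation is exactly $\dom h$, so that the affine-max formula is valid for every pair of points of $\dom h$, and keeping the advertised constant strictly positive.
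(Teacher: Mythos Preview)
Your proof is correct and follows exactly the route the paper indicates: the paper omits the proof, stating only that it ``follows from the representation of polyhedral function in \cite[Proposition 3.1]{mordukhovich2016generalized},'' which is precisely the max-of-affine-plus-indicator representation you invoke and from which you derive the Lipschitz bound in the expected elementary way.
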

\begin{thm}
\label{thmstrict}
 Let $\phi$ and $\Phi$ be defined in \cref{prob1} and \cref{sqprob1}, respectively. Suppose $0\in \ri(\partial \phi(\bar x))$ and assume $\bar x= \bar y^2$. Suppose that $\phi$ satisfies the KL property at $\bar x$ with exponent $\alpha\in (0,1)$. Then $\Phi$ satisfies the KL property at $\bar y$ with exponent $\max\{\alpha,1/2\}$.
\end{thm}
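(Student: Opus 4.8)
The plan is to establish the KL inequality for $\Phi$ at $\bar y$ directly, in the form $\dist(0,\partial\Phi(y))\ge c_0\,(\Phi(y)-\Phi(\bar y))^{\beta}$ with $\beta:=\max\{\alpha,1/2\}$, for all $y$ in some neighborhood $W$ of $\bar y$ with $\Phi(y)>\Phi(\bar y)$; this suffices by \cref{remarkkl}, because $\Phi=\phi\circ T$ is continuous on $\dom\Phi=\dom\partial\Phi$ (by \cref{nprop2-2}, the local Lipschitz continuity of $\phi$ on $\dom\phi$ from \cref{lem3-6}, and continuity of $T$), and one may then take $\varphi(s)=\tfrac{1}{c_0(1-\beta)}s^{1-\beta}$. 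Put $\bar x=\bar y^2$ and $I:=\{i:\bar y_i\ne 0\}=\{i:\bar x_i\ne 0\}$; from $0\in\ri(\partial\phi(\bar x))=\nabla f(\bar x)+\ri(\partial g(\bar x))$ we get $v:=-\nabla f(\bar x)\in\ri(\partial g(\bar x))$, which is exactly the hypothesis needed to apply \cref{lem3-1}, \cref{lem3-2} and \cref{lem3-5} to the polyhedral function $g$ at the base point $\bar x$. By \cref{prop1-2}, $\dist(0,\partial\Phi(y))=2D$ with $D:=\dist(0,y\circ\partial\phi(x))=\dist(-y\circ\nabla f(x),\,y\circ\partial g(x))$ and $x:=y^2$, so it is enough to bound $D$ from below.

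First I would collect three estimates, valid once $y$ is close enough to $\bar y$ (so that $x=y^2$ is near $\bar x$, $|y_i|\ge\delta:=\tfrac12\min_{i\in I}|\bar y_i|$ for $i\in I$, $-\nabla f(x)$ lies in the neighborhoods of $v$ provided by \cref{lem3-1} and \cref{lem3-5}, and the KL inequality for $\phi$ holds at points near $\bar x$): (a) \cref{lem3-1} with $\lambda=-\nabla f(x)$ gives $D\ge\sigma_0\|y_{I^c}\|$, whence $\|x_{I^c}\|\le\|y_{I^c}\|^2\le D^2/\sigma_0^2$; (b) since $|y_i|\ge\delta$ for $i\in I$ and $w_I\in\Pi_I\partial\phi(x)$ for every $w\in\partial\phi(x)$, we get $D^2=\inf_w\|y\circ w\|^2\ge\delta^2\inf_w\|w_I\|^2=\delta^2\dist(0,\Pi_I\partial\phi(x))^2$; (c) applying \cref{remarkkl} to $\phi$ yields a constant $C_{\mathrm{KL}}>0$ with $\dist(0,\partial\phi(x'))\ge C_{\mathrm{KL}}(\phi(x')-\phi(\bar x))^{\alpha}$ for all $x'\in\dom\phi$ near $\bar x$ with $\phi(x')>\phi(\bar x)$.

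Next I would split into two cases. Let $L$ be a local Lipschitz constant of $\phi$ near $\bar x$, $L_f$ one of $\nabla f$ near $\bar x$, and $\sigma_1,\sigma_2$ the constants from \cref{lem3-2} and \cref{lem3-5}. Case 1: $\phi(x)-\phi(\bar x)<2L\sigma_1\|x_{I^c}\|$. Then by (a), $D>\tfrac{\sigma_0}{\sqrt{2L\sigma_1}}(\phi(x)-\phi(\bar x))^{1/2}$, giving exponent $1/2$. Case 2: $\phi(x)-\phi(\bar x)\ge 2L\sigma_1\|x_{I^c}\|$. Apply \cref{lem3-2} to $g$ to obtain $\tilde x\in\dom\partial g$ with $\tilde x_{I^c}=0$, $\|x-\tilde x\|\le\sigma_1\|x_{I^c}\|$ and $\partial g(x)\subseteq\partial g(\tilde x)$; then $\phi(\tilde x)-\phi(\bar x)\ge\tfrac12(\phi(x)-\phi(\bar x))>0$, so (c) at $\tilde x$ gives $\dist(0,\partial\phi(\tilde x))\ge C_{\mathrm{KL}}2^{-\alpha}(\phi(x)-\phi(\bar x))^{\alpha}$. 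On the other hand, since $\tilde x_{I^c}=0$, \cref{lem3-5} for $g$ bounds $\dist(0,\partial\phi(\tilde x))=\dist(-\nabla f(\tilde x),\partial g(\tilde x))$ by $\sigma_2\dist(-[\nabla f(\tilde x)]_I,\Pi_I\partial g(\tilde x))$; using $\Pi_I\partial g(x)\subseteq\Pi_I\partial g(\tilde x)$, the triangle inequality for distances, the bound $\|\nabla f(\tilde x)-\nabla f(x)\|\le L_f\sigma_1\|x_{I^c}\|$, and estimates (a) and (b), this is $\le\sigma_2(\tfrac{1}{\delta}D+\tfrac{L_f\sigma_1}{\sigma_0^2}D^2)$. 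Hence $C_{\mathrm{KL}}2^{-\alpha}(\phi(x)-\phi(\bar x))^{\alpha}\le\sigma_2(\tfrac{1}{\delta}D+\tfrac{L_f\sigma_1}{\sigma_0^2}D^2)$; since $\phi(x)-\phi(\bar x)=\Phi(y)-\Phi(\bar y)$ stays bounded near $\bar y$, solving this quadratic inequality in $D$ yields $D\ge c_A(\phi(x)-\phi(\bar x))^{\alpha}$ for an explicit $c_A>0$. Recalling $\dist(0,\partial\Phi(y))=2D$ and that $0<\Phi(y)-\Phi(\bar y)<1$ near $\bar y$ (so $(\cdot)^{\alpha}\ge(\cdot)^{\beta}$ and $(\cdot)^{1/2}\ge(\cdot)^{\beta}$ there), both cases yield the claimed inequality with $c_0:=\min\{2c_A,\,\tfrac{2\sigma_0}{\sqrt{2L\sigma_1}}\}$.

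I expect the crux to be Case 2: transporting the KL decay of $\phi$, which is read off at the face-reduced point $\tilde x$ with $\tilde x_{I^c}=0$, back to a bound on $D=\dist(0,y\circ\partial\phi(x))$ at the original point $x$. This requires controlling the perturbations from $x$ to $\tilde x$ in both the value of $\phi$ and of $\nabla f$ by $\|x_{I^c}\|$, using the inclusion $\partial g(x)\subseteq\partial g(\tilde x)$ so that one only ever projects onto a larger set, and exploiting \cref{lem3-5}, which sees only the $I$-coordinates, to convert $\dist(0,\Pi_I\partial\phi(x))$ into $\dist(0,\partial\phi(\tilde x))$; this is precisely what \cref{lem3-1}, \cref{lem3-2}, \cref{lem3-5} — relying on \cref{lem3-3}, which makes the relevant faces of $\partial g(\bar x)$ full along the $I^c$-directions — are designed to deliver. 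The case split is forced because the $I^c$-block of $\partial\Phi$ degrades quadratically in $\|y_{I^c}\|$, which is the origin of the exponent $1/2$, whereas the $I$-block preserves the exponent $\alpha$.
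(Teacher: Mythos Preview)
Your proposal is correct and follows essentially the same route as the paper's proof: both use \cref{prop1-2} to reduce to $D=\dist(0,y\circ\partial\phi(x))$, invoke \cref{lem3-1} for the bound $D\gtrsim\|y_{I^c}\|$, use \cref{lem3-2} to pass to a face-reduced point $\tilde x$ with $\tilde x_{I^c}=0$ and $\partial g(x)\subseteq\partial g(\tilde x)$, and apply \cref{lem3-5} at $\tilde x$ together with the KL inequality for $\phi$ to recover exponent~$\alpha$ on the $I$-block. The only differences are organizational: you split cases on the size of $\phi(x)-\phi(\bar x)$ relative to $\|x_{I^c}\|$ before constructing $\tilde x$, whereas the paper constructs $\tilde x$ first and splits on whether $\phi(\tilde x)\le\phi(\bar x)$; and you work directly with $\Pi_I\partial\phi(x)$ rather than introducing the auxiliary $\tilde y=\sqrt{\tilde x}$ as the paper does in its estimates (bound2) and (bound4). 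Two small points to tighten: you need $-\nabla f(\tilde x)\in N$ (not just $-\nabla f(x)\in N$) when invoking \cref{lem3-5}, which follows since $\|\tilde x-\bar x\|\le\|\tilde x-x\|+\|x-\bar x\|$ can be made small; and the passage ``solving this quadratic inequality in $D$'' is cleanest if you also restrict to $D\le 1$ (equivalently $\dist(0,\partial\Phi(y))\le 2$), which is harmless for the KL conclusion and is exactly what the paper does.
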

\begin{proof}
  Without loss of generality, we assume $\bar y\geq 0$. Let $I=\{i\in [n]:~\bar y_i> 0\}$. By \cite[Exercise 8.8(c)]{rockafellar2009variational}, we know 
  \begin{equation}
    \label{thmeq3-1}
    \partial\phi(\bar x)=\nabla f(\bar x)+\partial g(\bar x).
  \end{equation}
  This also implies $\dom\partial \phi=\dom\partial g$, by using \cref{nprop2-2}, we see that
  \begin{equation}
    \label{domeq}
    \dom\phi=\dom g=\dom\partial g=\dom\partial\phi.
  \end{equation}
   By \cref{thmeq3-1}, the condition $0\in \ri(\partial\phi(\bar x))$ is equivalent to that 
  \[    -\nabla f(\bar x)\in \ri(\partial g(\bar x)).       \] 
  We assume the neighborhood $U$ of $\bar x$, the neighborhood $N$ of $-\nabla f(\bar x)$ and the neighborhood $U_1$ of $\bar y$ are taken to be sufficiently small such that 
  \begingroup
\setlength{\abovedisplayskip}{4pt}  
\setlength{\belowdisplayskip}{4pt}
  \begin{subequations}
  \begin{equation}
  \label{thmeq3-2a}
      \begin{aligned}
            &\forall x\in U\cap \dom\phi,  \quad   \exists \tilde x\in \dom\partial g,\\
            &\|x-\tilde x\|\leq  \sigma_1 \|x_{I^c}\|,~\tilde x_{I^c}=0,~\partial g(x)\subseteq \partial g(\tilde x) 
      \end{aligned}
  \end{equation} 
    \begin{equation}
    \label{thmeq3-2b}
        \begin{aligned}
              &\forall x\in U\cap \dom\phi\text{ with }x_{I^c}=0,~w\in N,  \\&\dist(w,\partial g(x))\leq  \sigma_2 \dist(\Pi_{I}w,\Pi_I(\partial g(x))). 
        \end{aligned}
    \end{equation}
    \begin{equation}
    \label{thmeq3-2c}
    \begin{aligned}
                 &\forall x\in U\cap \dom\phi\text{ with }\phi(x)>\phi(\bar x),\\
                 &\dist(-\nabla f(x),\partial g(x)) \geq  \sigma_3(\phi(x)-\phi(\bar x))^\alpha, 
    \end{aligned}
    \end{equation}
    \begin{equation}
          \forall y\in U_1\text{ with }y^2\in\dom\partial g, w\in N,\quad  \dist(y\circ w,y\circ \partial g(y^2)) \geq  \sigma_4\|y_{I^c}\|,           \label{thmeq3-2d} 
   \end{equation}
   \begin{gather}
     \forall x\in U,~i\in I,~~ 0<\sigma_5 \le x_i\leq \sigma_6\label{thmeq3-2e} \\
     \forall x,z\in U,~~ \|\nabla f(x)-\nabla f(z)\|\leq L\|x-z\|,\label{thmeq3-2f} \\
     \forall x,z\in U\cap\dom\phi,~~ |\phi(x)-\phi(z)|\leq L\|x-z\|,\label{thmeq3-2g}
    \end{gather}
  \end{subequations}
  \endgroup
  where \cref{thmeq3-2a} follows from \cref{lem3-2}, \cref{thmeq3-2b} follows from \cref{lem3-5}, \cref{thmeq3-2c} follows from \cref{defkl}, \cref{thmeq3-1}, \cref{remarkkl} and \cref{lem3-6}, \cref{thmeq3-2d} follows from \cref{lem3-1}, \cref{thmeq3-2e} follows from the definition the $I$, \cref{thmeq3-2f} follows that $\nabla f$ is locally Lipschitz thanks to $f\in C^2(\R^n)$, and \cref{thmeq3-2g} follows from \cref{lem3-6}. We now choose a neighborhood $V$ of $\bar y$ to be sufficiently small such that
  \begin{subequations}
    \begin{gather}
    \forall y\in V\cap \dom\phi,  \quad  \nabla f(\bB_{2\sigma_1\|y^2_{I^c}\|}(y^2))\subseteq N, \label{thmeq3-3a}  \\
     V\subseteq U_1,\label{thmeq3-3b} \\
    \forall y\in V,\quad\bB_{2\sigma_1\|y^2_{I^c}\|}(y^2)\subseteq U, \label{thmeq3-3c}  \\
     \forall y\in V,\quad   L\sqrt{\sigma_6}\sigma_1\|y^2_{I^c}\| \leq  \sigma_4\|y_{I^c}\|.          \label{thmeq3-3e}
    \end{gather}
  \end{subequations}
  Fix $y\in V\cap \dom(\partial\Phi)$ with $\Phi(y)>\Phi(\bar y)$ and $\dist(0,\partial \Phi(y))\leq 1$, and set $x=y^2$. According to \cref{prop1-2}, we know that 
  \begin{equation}
    \label{bound0}
    \dist(0,\partial\Phi(y))=2\dist(-y\circ \nabla f(x), y\circ\partial g(x)).
  \end{equation}
  This means that $y\in \dom\partial g$. By \cref{thmeq3-3a}, \cref{thmeq3-3b} and \cref{thmeq3-2d}, we know that 
  \begin{equation}
    \label{bound1}
    \dist(-y\circ \nabla f(x), y\circ\partial g(x))\geq \sigma_4\|y_{I^c}\|.
  \end{equation}
  By \cref{thmeq3-3c} and \cref{thmeq3-2a}, we may select $\tilde x$ such that 
  \begin{equation}
    \label{prop_tildex}
   \|x-\tilde x\|\leq \sigma_1\|x_{I^c}\|,~~  \tilde x\overset{\rm (a)}{\in} U,~~\tilde x_{I^c}=0,~~\partial g(x)\subseteq \partial g(\tilde x),
  \end{equation}
  where in (a) we have used $\|\tilde x-x\|\leq \sigma_1\|x_{I^c}\|$ in \cref{thmeq3-2a} and \cref{thmeq3-3c} to show $\tilde x\in U$. Let $\tilde y:=\sqrt{\tilde x}$, which is defined as $\tilde y_i=\sqrt{\tilde x_i}$ for all $i\in [n]$, and define 
  \[  \Pi_I:\R^n\to \R^{|I|},\quad \Pi_I(x)=x_I.      \]
  Then, we have 
  \begin{equation}
    \label{bound2}
    \begin{aligned}
      &\dist(-\tilde y\circ\nabla f(\tilde x),\tilde y\circ \partial g(\tilde x))\\
      &\overset{\rm (a)}{=}\dist(-\Pi_{I}(\tilde y\circ\nabla f(\tilde x)),\Pi_{I}(\tilde y\circ \partial g(\tilde x)))\\
      &\overset{\rm (b)}{\leq} \sqrt{\sigma_6}\dist(-\Pi_{I}\nabla f(\tilde x),\Pi_I\partial g(\tilde x)) \\
      &\overset{\rm (c)}{\leq} \sqrt{\sigma_6}(\dist(-\Pi_{I}\nabla f(x),\Pi_I\partial g(\tilde x))+\|\Pi_I(\nabla f(x)-\nabla f(\tilde x))\|)  \\
      &\overset{\rm (d)}{\leq} \sqrt{\sigma_6}(\dist(-\Pi_{I}\nabla f(x),\Pi_I\partial g(\tilde x))+\|\nabla f(x)-\nabla f(\tilde x)\|) \\
      &\overset{\rm (e)}{\leq} \sqrt{\sigma_6}(\dist(-\Pi_{I}\nabla f(x),\Pi_I\partial g(\tilde x))+L\|x-\tilde x\|)\\ 
      &\overset{\rm (f)}{\leq} \sqrt{\sigma_6}(\dist(-\Pi_{I}\nabla f(x),\Pi_I\partial g(x))+L\sigma_1\|x_{I^c}\|)\\ 
      &\overset{\rm (g)}{\leq} \sqrt{\sigma_6}(\frac{1}{\sqrt{\sigma_5}}\dist(-\Pi_{I}(y\circ \nabla f(x)),\Pi_I(y\circ\partial g(x)))+L\sigma_1\|x_{I^c}\|)\\ 
      &\leq \sqrt{\sigma_6}(\frac{1}{\sqrt{\sigma_5}}\dist(y\circ \nabla f(x)),y\circ\partial g(x))+L\sigma_1\|x_{I^c}\|)
    \end{aligned}
  \end{equation} 
  where in (a) we have used $\tilde y_{I^c}=\sqrt{\tilde x_{I^c}}=0$, in (b) we have used \eqref{thmeq3-2e} to show $\tilde y_i=\sqrt{\tilde x_i}\leq \sqrt{\sigma_6}$ for all $i\in I$ thanks to $\tilde x\in U$ by \cref{prop_tildex}, in (c) we have used the fact that distance function is Lipschitz continuous with modulus 1, in (d) we have used that $\|\Pi_I(z)\|\leq \|z\|$ for all $z\in \R^n$ by the definition of $\Pi_I$, in (e) we have used that $x,\tilde x\in U$ and \cref{thmeq3-2f}, in (f) we have used \cref{prop_tildex} to show $\partial g(x)\subseteq \partial g(\tilde x)$ and $\|x-\tilde x\|\leq \sigma_1\|x_{I^c}\|$, and in (g) we have used \cref{thmeq3-3c} to show $x\in U$ and \cref{thmeq3-2e} to show $y_i=\sqrt{x_i}\geq \sqrt{\sigma_{5}}$ for all $i\in I$. Summing \eqref{bound1} and \eqref{bound2}, we see that 
  \begin{equation}
    \label{bound3}
    \begin{aligned}
      &(1+\sqrt{\frac{\sigma_6}{\sigma_5}})\dist(-y\circ \nabla f(x), y\circ\partial g(x))\\
      &\geq  \dist(-\tilde y\circ\nabla f(\tilde x),\tilde y\circ \partial g(\tilde x)) -L\sqrt{\sigma_6}\sigma_1\|x_{I^c}\|+\sigma_4\|y_{I^c}\|\\
      & \overset{\rm (a)}{\geq} \dist(-\tilde y\circ\nabla f(\tilde x),\tilde y\circ \partial g(\tilde x)),
    \end{aligned}
  \end{equation}
  where in (a) we have used $x=y^2$ and \cref{thmeq3-3e}. Thanks to \eqref{prop_tildex} we know $\tilde x\in U$, and hence by \cref{thmeq3-2e} we have 
  \begin{equation}
    \label{bound4}
    \begin{aligned}
          \dist(-\tilde y\circ\nabla f(\tilde x),\tilde y\circ \partial g(\tilde x))&\overset{\rm (a)}{\geq} \sqrt{\sigma_5}\dist(-\Pi_{I}\nabla f(\tilde x),\Pi_I\partial g(\tilde x))\\
          &\overset{\rm (b)}{\geq}\frac{\sqrt{\sigma_5}}{\sigma_2} \dist(-\nabla f(\tilde x),\partial g(\tilde x)), 
    \end{aligned}
  \end{equation}
  where in (a) we have used $\tilde y_i=\sqrt{\tilde x_i}\geq \sqrt{\sigma_5}$ by \cref{thmeq3-2e} thanks to $\tilde x\in U$ in \cref{prop_tildex}, and in (b) we have used \cref{thmeq3-2b} thanks to that $\nabla f(\tilde x)\in N$ by using \cref{thmeq3-3a}. Combining \cref{bound3} with \cref{bound4}, we see that 
  \begin{equation}
    \label{bound5}
    \dist(-y\circ \nabla f(x), y\circ\partial g(x))\geq \delta_1\dist(-\nabla f(\tilde x),\partial g(\tilde x)),
  \end{equation}
  where we set $\delta_1={\sqrt{\sigma_5}}/({\sigma_2(1+\sqrt{{\sigma_6}/{\sigma_5}})})$. Moreover, utilizing $x,\tilde x\in U\cap \dom\partial g=U\cap\dom\phi$ by \cref{domeq}, by \cref{thmeq3-2g}, we have 
  \begin{equation}
    \label{bound6}
    \begin{aligned}
    \Phi(y)-\Phi(\bar y)&=\phi(x)-\phi(\bar x)\leq \phi(\tilde x)-\phi(\bar x)+L\|x-\tilde x\|\\&\overset{\rm (a)}{\leq} \phi(\tilde x)-\phi(\bar x)+L\sigma_1\|x_{I^c}\| \\
      &\overset{\rm (b)}{=} \phi(\tilde x)-\phi(\bar x)+L\sigma_1\sqrt{\sum_{i\in I^c} y_i^4} \\
      &\leq  \phi(\tilde x)-\phi(\bar x)+L\sigma_1\|y_{I^c}\|^2  \\
      &\overset{\rm (c)}{\leq }\phi(\tilde x)-\phi(\bar x)+\frac{L\sigma_1}{\sigma_4^2}\dist^2(-y\circ \nabla f(x), y\circ\partial g(x))\\
      &\overset{\rm (d)}{=}\phi(\tilde x)-\phi(\bar x)+\frac{L\sigma_1}{4\sigma_4^2}\dist^2(0,\partial \Phi(y)),
    \end{aligned}
  \end{equation}
  where in (a) we have used \cref{prop_tildex}, in (b) we have used $\tilde y^2=\tilde x$, in (c) we have used \cref{bound1}, and in (d) we have used \eqref{bound0}. If $\phi(\tilde x)\leq\phi(\bar x)$, then by \cref{bound6} we have 
  \begin{equation}
    \label{bound7}
    \Phi(y)-\Phi(\bar y)\leq \frac{L\sigma_1}{4\sigma_4^2}\dist^2(0,\partial \Phi(y)).
  \end{equation}
  Otherwise, we can apply \cref{thmeq3-2c} to see that 
  \begin{equation}
    \label{bound8}
    \begin{aligned}
      \Phi(y)-\Phi(\bar y)&\overset{\rm (a)}{\leq} \delta_2(\dist^{\frac{1}{\alpha}}(-\nabla f(\tilde x),\partial g(\tilde x))+ \dist^2(0,\partial \Phi(y))) \\ 
      &\overset{\rm (b)}{\leq } \delta_2(\frac{1}{\delta_1^\frac{1}{\alpha}}\dist^{\frac{1}{\alpha}}(-y\circ \nabla f(x), y\circ\partial g(x))+ \dist^2(0,\partial \Phi(y))) \\
      &\overset{\rm (c)}{\leq } \delta_3(\dist^{\frac{1}{\alpha}}(0,\partial \Phi(y))+ \dist^2(0,\partial \Phi(y))) \\
      &\overset{\rm (d)}\leq  2\delta_3 \dist^{\min\{2,\frac{1}{a}\}}(0,\partial\Phi(y)), 
    \end{aligned}
  \end{equation} 
  where in (a) we set $\delta_2=\max\{1/\sigma_3^{1/\alpha},{L\sigma_1}/({4\sigma_4^2}\})$, in (b) we have used \cref{bound5}, in (c) we have used \cref{bound0} and set $\delta_3:=\delta_2\max\{{\delta_2}/{(2\delta_1)^{{1}/{\alpha}}} ,1\}$, and in (d) we have used $\dist(0,\partial \Phi(y))\leq 1$. Using \cref{bound7} and \cref{bound8}, by \cite[Remark 2.2]{ouyang2024kurdyka} we know $\Phi$ satisfies the KL property at $\bar y$ with exponent $\max\{{1}/{2},\alpha\}$.
\end{proof}

\section{KL without strict complementarity condition}
\label{sec:KLnonstrict}
In this section, we derive the KL exponent of $\Phi$ in \cref{sqprob1} without assuming strict complementarity condition. The techniques used in this section is similar to \cite{ouyang2024kurdyka}. The next lemma can be deduced by using essentially the same argument as in \cite[Lemma 3.10]{ouyang2024kurdyka}. The main differences here are that: first, we do not require the differentiability of $g$; second, we set $J_1=\emptyset$ in \cite[Lemma 3.1]{ouyang2024kurdyka} since we do not have an explicit separable structure here as in \cite{ouyang2024kurdyka}.
\begin{lemma} 
  \label{lemma5-1}
  Suppose that $g:\R^n\to\R\cup\{\infty\}$ is convex and $\bar x \in \R^n$, and $\bar x $ is a global minimizer of $g$. Fix an index set $I\subseteq [n]$. Let $\Omega:=\argmin g$. Assume that
  \begin{itemize}
      \item   There exist a neighborhood $U$ of $\bar x$ and constants $c>0$, {$\gamma\in (0,1]$ such that for all $\rho\in \R^{n}_{+}$}, it holds that
  \begin{equation}\label{J3eb}
{\dist(x,\Omega\cap S_{\rho}) \leq c\max\{\dist(x,\Omega),\dist(x,S_{\rho})\}^\gamma}\ \ \ \forall x\in U\cap \dom\partial g,
  \end{equation}
  where {$S_{\rho}:=\{x\in\R^n:|x_i-\bar x_i|\leq \rho_i\ \ \ \forall i\in I^c\}$}.
  \end{itemize}
   Suppose that $g$ satisfies the KL property at $\bar x$ with exponent $\alpha\in(0,1)$. 
   Then, by shrinking $U$ if necessary, there is a constant $\sigma>0$ such that
  \[
   \sum_{i\in I}|v_i|^2+\sum_{i\in I^c}|x_i-\bar x_i||v_i|^2\geq \sigma(g(x)-g(\bar x))^{1+\beta}\ \ \ \forall x\in U\cap \dom \partial g,~v\in\partial g(x),
  \]
  where $\beta=1-\gamma(1-\alpha)\in (0,1)$.
\end{lemma}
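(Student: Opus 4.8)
The plan is to combine three ingredients: the subgradient inequality from convexity of $g$, the assumed error bound \eqref{J3eb} applied to an $x$-dependent choice of $\rho$, and the Hölderian growth (sharpness) estimate that the convex KL property with exponent $\alpha$ automatically yields. Write $g^{*}:=g(\bar x)=\min g$, and for $x\in U\cap\dom\partial g$ set $r:=g(x)-g^{*}\ge 0$, $B:=\bigl(\sum_{i\in I}v_{i}^{2}\bigr)^{1/2}$, and $C:=\bigl(\sum_{i\in I^{c}}|x_{i}-\bar x_{i}|\,v_{i}^{2}\bigr)^{1/2}$, so that the left-hand side of the asserted inequality is exactly $B^{2}+C^{2}$. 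If $r=0$ then $x\in\Omega$ and the inequality reads $B^{2}+C^{2}\ge 0$, so from now on I assume $r>0$ and aim at $B^{2}+C^{2}\ge\sigma r^{1+\beta}$.

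First I would fix such an $x$ and a $v\in\partial g(x)$ and take $\rho\in\R^{n}_{+}$ with $\rho_{i}:=|x_{i}-\bar x_{i}|$ for $i\in I^{c}$ (the remaining entries of $\rho$ play no role). Then $x\in S_{\rho}$, so $\dist(x,S_{\rho})=0$, and \eqref{J3eb} --- whose constant $c$ is the same for every $\rho$ --- gives $\dist(x,\Omega\cap S_{\rho})\le c\,\dist(x,\Omega)^{\gamma}$. I then invoke the standard fact that the KL property with exponent $\alpha\in(0,1)$ of a convex function at a minimizer entails a local Hölderian growth bound: after shrinking $U$, there is $\tau>0$ with $\dist(x,\Omega)\le\tau\,r^{1-\alpha}$ for all $x\in U\cap\dom\partial g$, whence $\dist(x,\Omega\cap S_{\rho})\le c\tau^{\gamma}r^{\gamma(1-\alpha)}$. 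Since $\bar x\in\Omega\cap S_{\rho}$ this set is nonempty and closed, so I pick $z$ with $D:=\|x-z\|=\dist(x,\Omega\cap S_{\rho})$. As $z\in\Omega$ we have $g(z)=g^{*}$, so $g(z)\ge g(x)+\langle v,z-x\rangle$ yields
\[ r\le\langle v,x-z\rangle=\sum_{i\in I}v_{i}(x_{i}-z_{i})+\sum_{i\in I^{c}}v_{i}(x_{i}-z_{i}). \]
The first sum is at most $B\,D\le Bc\tau^{\gamma}r^{\gamma(1-\alpha)}$ by Cauchy--Schwarz. For the second, $z\in S_{\rho}$ forces $|x_{i}-z_{i}|\le 2|x_{i}-\bar x_{i}|$ for $i\in I^{c}$, while also $|x_{i}-z_{i}|\le D$, so $|x_{i}-z_{i}|\le(2|x_{i}-\bar x_{i}|D)^{1/2}$; a second Cauchy--Schwarz then bounds it by $(2|I^{c}|D)^{1/2}C\le(2|I^{c}|c\tau^{\gamma})^{1/2}r^{\gamma(1-\alpha)/2}C$. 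Altogether $r\le c_{3}B\,r^{\gamma(1-\alpha)}+c_{4}C\,r^{\gamma(1-\alpha)/2}$ with $c_{3},c_{4}>0$.

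Finally I would split into the two cases according to which term on the right is at least $r/2$. If $c_{3}B\,r^{\gamma(1-\alpha)}\ge r/2$, then $B\ge r^{1-\gamma(1-\alpha)}/(2c_{3})=r^{\beta}/(2c_{3})$, hence $B^{2}\ge r^{2\beta}/(4c_{3}^{2})$; since $\beta=1-\gamma(1-\alpha)\in(0,1)$ we have $2\beta<1+\beta$, so $r^{2\beta}\ge r^{1+\beta}$ once $r\le 1$ --- which holds on $U$ after one more shrinking, by continuity of $g$ on $\dom g$ near $\bar x$ (polyhedral functions being locally Lipschitz on their domain, \cref{lem3-6}). If instead $c_{4}C\,r^{\gamma(1-\alpha)/2}\ge r/2$, then $C\ge r^{1-\gamma(1-\alpha)/2}/(2c_{4})$, hence $C^{2}\ge r^{2-\gamma(1-\alpha)}/(4c_{4}^{2})=r^{1+\beta}/(4c_{4}^{2})$ because $2-\gamma(1-\alpha)=1+\beta$. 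In either case $B^{2}+C^{2}\ge\sigma r^{1+\beta}$ with $\sigma:=\min\{1/(4c_{3}^{2}),1/(4c_{4}^{2})\}$, and $\beta\in(0,1)$ follows at once from $\gamma\in(0,1]$ and $\alpha\in(0,1)$. The step I expect to be the crux is the Hölderian growth bound $\dist(x,\Omega)\le\tau(g(x)-g^{*})^{1-\alpha}$: one must verify it really follows from the convex KL property with exponent $\alpha$, uniformly on a neighborhood of $\bar x$ and for points of $\dom\partial g$, over the whole range $\alpha\in(0,1)$ --- this is the quantitative heart of the argument mirrored from \cite[Lemma 3.10]{ouyang2024kurdyka}. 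A secondary point needing care is the two-sided estimate $|x_{i}-z_{i}|\le\min\{2|x_{i}-\bar x_{i}|,\,D\}$ for $i\in I^{c}$, which is exactly what produces both the half-power $r^{\gamma(1-\alpha)/2}$ and the weight $|x_{i}-\bar x_{i}|$ matching $C$, together with keeping track of the several successive shrinkings of $U$ so that the error bound, the growth bound, and $r\le 1$ hold at the same time.
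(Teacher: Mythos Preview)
Your argument is correct and matches the approach the paper defers to by citing \cite[Lemma~3.10]{ouyang2024kurdyka} without giving an independent proof: the same three ingredients (convex subgradient inequality, the error bound \eqref{J3eb} with the $x$-dependent choice $\rho_i=|x_i-\bar x_i|$, and the H\"olderian growth bound $\dist(x,\Omega)\le\tau r^{1-\alpha}$ implied by convex KL) are combined in the same way.

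One small technical point: your appeal to \cref{lem3-6} to force $r\le 1$ after shrinking $U$ presumes $g$ is polyhedral (or at least continuous on $\dom g$ near $\bar x$), which is not among the stated hypotheses of \cref{lemma5-1} --- only convexity is assumed. In the paper's sole use of the lemma (applied to $\phi$ in \cref{thm-KLnonstrict}) this is harmless, since \cref{lem3-6} does cover $\phi$. For the lemma as written, the clean fix is not to rely on continuity at all: the H\"olderian growth bound you invoke already comes with a level restriction $0<g(x)-g(\bar x)<a$ inherited from the KL definition, and taking $a\le 1$ there gives $r\le 1$ directly, without any shrinking of $U$ beyond what the KL and error-bound neighborhoods already impose.
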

\begin{thm}
  \label{thm-KLnonstrict}
  Assume that $h$ in \cref{prob1} is convex. Let $\bar x=\bar y^2$ be a stationary point of $\phi$ and $\Omega:=\argmin_{x\in\R^n}\phi(x)$. Suppose $I:=\{i\in [n]:~\bar x_i\neq 0\}$.  Assume further there exists a neighborhood $U$ of $\bar x$ and constants $c>0$ and $\gamma\in (0,1]$  such that for all $\rho\in \R_+^n$ it holds that
  \begin{equation}
    \label{assumeb}
    \dist(x,\Omega\cap S_\rho)\leq c\max\{\dist(x,\Omega),\dist(x,S_\rho)\}^\gamma  \quad \forall x\in U\cap \dom\partial \phi,
  \end{equation}
  where $S_\rho=\{x\in \R^n:~|x_i-\bar x_i|\leq \rho_i\quad \forall i \in I\}$. If $\phi$ satifies the KL property with exponent $\alpha\in (0,1)$ at $\bar x$, then $\Phi$ in \cref{sqprob1} satisfies the KL exponent with exponent $\frac{1+\beta}{2}$, where $\beta=1-\gamma(1-\alpha)$. 
\end{thm}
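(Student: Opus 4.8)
The plan is to reduce the desired KL inequality for $\Phi$ at $\bar y$ to the reparameterization-free estimate provided by \cref{lemma5-1}, using the exact identity for $\dist(0,\partial\Phi)$ from \cref{prop1-2}. Two preliminary observations set this up. Since $\phi$ is convex and $\bar x$ is stationary, i.e. $0\in\partial\phi(\bar x)$, the point $\bar x$ is in fact a global minimizer of $\phi$; hence $\bar x\in\Omega$, and $\bar y$ is a global minimizer of $\Phi$ because $\Phi(y)=\phi(y^2)\ge\phi(\bar x)=\Phi(\bar y)$. In particular $\Phi(y)-\Phi(\bar y)\ge 0$ everywhere, so the KL inequality only has to be verified where it is strictly positive. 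Moreover, by \cref{nprop2-2} we have $\dom\partial\Phi=T^{-1}(\dom\partial\phi)$, so whenever $y$ is close to $\bar y$ and $y\in\dom\partial\Phi$, the point $x=y^2$ lies in $\dom\partial\phi$ and, being of the form $y^2$, satisfies $x\ge 0$.

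Next I would apply \cref{lemma5-1} with the convex function $\phi$ playing the role of ``$g$'', with the same index set $I=\{i:\bar x_i\neq 0\}$, with the KL exponent $\alpha$ of $\phi$, and with the error bound \eqref{assumeb}, which is exactly hypothesis \eqref{J3eb} specialized to $\phi$ (the set $S_\rho$ there constraining the degenerate coordinates $i\in I^c$). This produces a neighborhood $U$ of $\bar x$ and a constant $\sigma>0$ such that
\[
  \sum_{i\in I}|v_i|^2+\sum_{i\in I^c}|x_i-\bar x_i|\,|v_i|^2\ \ge\ \sigma\,(\phi(x)-\phi(\bar x))^{1+\beta}\qquad\forall\,x\in U\cap\dom\partial\phi,\ v\in\partial\phi(x),
\]
with $\beta=1-\gamma(1-\alpha)\in(0,1)$. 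Shrinking $U$, I may also assume a uniform bound $x_i\ge\sigma_5>0$ for all $x\in U$ and $i\in I$ (possible since $\bar x_i=\bar y_i^2>0$ there), while for $i\in I^c$ we have $\bar x_i=0\le x_i$, hence $x_i=|x_i-\bar x_i|$.

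Then, for $y$ in a neighborhood $V$ of $\bar y$ chosen so that $T(V)\subseteq U$, put $x=y^2$; \cref{prop1-2} gives
\[
  \dist(0,\partial\Phi(y))^2=4\min_{v\in\partial\phi(x)}\sum_{i=1}^n x_i v_i^2
  =4\min_{v\in\partial\phi(x)}\Bigl(\,\sum_{i\in I}x_i v_i^2+\sum_{i\in I^c}|x_i-\bar x_i|\,v_i^2\,\Bigr).
\]
Using $x_i\ge\sigma_5$ on $I$ and then the displayed estimate from \cref{lemma5-1} term by term, every $v\in\partial\phi(x)$ satisfies
\[
  \sum_{i\in I}x_i v_i^2+\sum_{i\in I^c}|x_i-\bar x_i|\,v_i^2\ \ge\ \min\{\sigma_5,1\}\Bigl(\sum_{i\in I}v_i^2+\sum_{i\in I^c}|x_i-\bar x_i|\,v_i^2\Bigr)\ \ge\ \min\{\sigma_5,1\}\,\sigma\,(\phi(x)-\phi(\bar x))^{1+\beta};
\]
taking the minimum over $v$ and using $\Phi(y)-\Phi(\bar y)=\phi(x)-\phi(\bar x)$ yields $\dist(0,\partial\Phi(y))^2\ge 4\min\{\sigma_5,1\}\,\sigma\,(\Phi(y)-\Phi(\bar y))^{1+\beta}$. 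Finally, $\Phi$ is continuous on $\dom\Phi$ (since $g$, hence $\phi$, is Lipschitz on its domain by \cref{lem3-6} and $T$ is continuous), so by the standard conversion of such a value/subgradient estimate into the KL property (as in \cite[Remark 2.2]{ouyang2024kurdyka}, using \cref{remarkkl} to dispose of the threshold $a$), $\Phi$ satisfies the KL property at $\bar y$ with exponent $\tfrac{1+\beta}{2}$, since $2\cdot\tfrac{1+\beta}{2}=1+\beta$.

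The routine ingredients are the elementary coordinatewise comparison passing from $\sum_i x_i v_i^2$ to $\min\{\sigma_5,1\}$ times the weighted sum of \cref{lemma5-1}, and the bookkeeping with $\varphi(s)=\bar c\,s^{(1-\beta)/2}$. The step requiring the most care — and the real content — is checking that \cref{lemma5-1} applies verbatim to $\phi$: that $\bar x$ is a genuine global minimizer of $\phi$ (not merely stationary, which is precisely why convexity of $\phi$ is needed), that $\phi$ rather than only the polyhedral piece $g$ is the function whose KL exponent and error bound are invoked, and that $S_\rho$ in \eqref{assumeb} constrains exactly the coordinates $i\in I^c$ as in \eqref{J3eb}. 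In contrast to the strict-complementarity case (\cref{thmstrict}), none of the neighborhood-surgery lemmas (\cref{lem3-1}, \cref{lem3-2}, \cref{lem3-5}) are needed here, because the weight $x_i=|x_i-\bar x_i|$ carried by the degenerate coordinates after the square transformation is exactly the weighting already built into \cref{lemma5-1}.
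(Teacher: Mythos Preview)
Your proposal is correct and follows essentially the same route as the paper: apply \cref{lemma5-1} to $\phi$ (noting $\bar x_i=0$ for $i\in I^c$ so the weight becomes $|x_i|$), choose a neighborhood of $\bar y$ on which the nondegenerate coordinates of $y^2$ are bounded below, and combine with the identity $\dist(0,\partial\Phi(y))^2=4\min_{v\in\partial\phi(x)}\sum_i x_iv_i^2$ from \cref{prop1-2}. The paper's version is organized identically, with your $\sigma_5$ playing the role of their $\delta^2$ (they bound $|y_i|\ge\delta$ rather than $x_i\ge\sigma_5$, which is the same thing); your added remarks on why $\bar x\in\Omega$ and why \cref{lemma5-1} applies to $\phi$ are implicit in the paper but worth making explicit.
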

\begin{proof}
  By \cref{lemma5-1}, we know there exists a neighborhood $V$ of $\bar x$ such that 
 \begin{equation}
  \label{eq5-2-0}
  \sum_{i\in I}|v_i|^2+\sum_{i\in I^c}|x_i||v_i|^2\geq \sigma(\phi(x)-\phi(\bar x))^{1+\beta}\ \ \ \forall x\in V\cap \dom \partial \phi,~v\in\partial \phi(x),
 \end{equation}
 where we have used the definition of $I$ to show that $\bar x_i=0$ for all $i\in I^c$. Select a neighborhood $U$ of $\bar y$ such that 
  \begin{subequations}
    \begin{align}
      \forall y\in U,~i\in I,~~~ |y_i|\geq \delta, \label{eq5-2-2a} \\
      \forall y\in U,\quad y^2\in V, \label{eq5-2-2b}
    \end{align}
  \end{subequations}
  Fix $y\in U\cap \dom\partial \Phi$ and set $x=y^2$. Using \cref{prop1-2}, we know there exists $v\in \partial \phi(x)$ such that
  \begin{equation}
    \label{eq5-2-3}
    \begin{aligned}
      \dist^2(0,\partial \Phi(y))&=4\|y\circ v\|^2=\sum_{i=1}^n y_i^2v_i^2\overset{\rm (a)}{=}\sum_{i\in I} y_i^2v_i^2+\sum_{i\in I^c} |x_i|v_i^2 \\
      &\overset{\rm (b)}{\geq} \sum_{i\in I}\delta^2v_i^2+\sum_{i\in I^c} |x_i|v_i^2 \\
      &\geq \min\{\delta^2,1\}\left(\sum_{i\in I}v_i^2+\sum_{i\in I^c} |x_i|v_i^2\right) \\
      &\overset{\rm (c)}{\geq} \min\{\delta^2,1\}\sigma(\phi(x)-\phi(\bar x))^{1+\beta} \\
      &=\min\{\delta^2,1\}\sigma(\Phi(y)-\Phi(\bar y))^{1+\beta}
    \end{aligned}
  \end{equation} 
  where in (a) we have used $x=y^2$, in (b) we have used \cref{eq5-2-2a}, and (c) follows from \cref{eq5-2-2b} and \cref{eq5-2-0}. This finishes the proof.
\end{proof}
\begin{remark}
\label{remark6-3}
  Let us note that the exponent given in \cref{thm-KLnonstrict} is tight, as can be seen in \cite[Example 3.14]{ouyang2024kurdyka}. In particular, this covers the case where $f=h(Ax)$ with $h$ being strongly convex, in which case the conclusion in \cref{thm-KLnonstrict} holds with $\gamma=1$ by \cite[Lemma 3.2.3]{facchinei2007finite}.
\end{remark}
\begin{remark}
  Suppose $f$ in \cref{prob1} is a composition of a strictly convex function and a linear mapping, then $\phi$ in \cref{prob1} satisfies the KL property with exponent $1/2$ \cite[Corollary 5.1]{li2018calculus}. This covers the cases where $f$ is the logistic loss function or the least square loss function. The solution set would also be polyhedral by using \cref{remark6-3}. Therefore, by using \cref{thmstrict}, \cref{thm-KLnonstrict} and \cref{remark6-3}, we know the KL exponent of $\Phi$ in \cref{sqprob1} at its stationary point is either $1/2$ or $3/4$, depending on whether the strict complementarity condition holds or not.
\end{remark}

 
\bibliographystyle{siamplain}
\bibliography{Commonbib}
\end{document}